\newcommand{\N}{{\mathds N}}   
\newcommand{\R}{{\mathds R}}   
\newcommand{\id}{\mathrm{id}} 
\newcommand{\proj}{\mathrm{proj}} 
\newcommand{\conv}{\mathrm{conv}} 
\newcommand{\basis}{\varepsilon} 
\newcommand{\Hgr}{\mathrm{H}} 
\newcommand{\vtc}{\mathrm{vert}} 
\newcommand{\Dowker}{\mathrm{Dowker}} 
\newcommand{\dowker}{\mathcal{D}} 
\newcommand{\rect}{\mathrm{Rect}} 
\newcommand{\cuboid}{\mathrm{Cuboid}} 
\newcommand{\simp}{\mathrm{simp}} 
\numberwithin{equation}{subsection}
\newtheorem{theorem}[equation]{Theorem}
\newtheorem{proposition}[equation]{Proposition}
\newtheorem{lemma}[equation]{Lemma}
\newtheorem{corollary}[equation]{Corollary}
\theoremstyle{definition}
\newtheorem{example}[equation]{Example}
\newtheorem{definition}[equation]{Definition}
\newtheorem*{definition*}{Definition}
\newtheorem{def-prop}[equation]{Definition--Proposition}
\newtheorem{convention}[equation]{Convention}
\newtheorem{remark}[equation]{Remark}
\newtheorem*{remark*}{Remark}
\title{Dowker's Theorem for Higher-Order Relations}
\author{Vin de Silva, Chad Giusti, Vladimir Itskov, Michael Robinson, \\ Radmila Sazdanovic, Niko Schonsheck,  Melvin Vaupel,  Iris Yoon}
\thanks{The authors are listed in alphabetical order. }
\date{\today}
\begin{document}

\begin{abstract}
Given a relation $R \subseteq I \times J$ between two sets, Dowker's Theorem (1952) states that the homology groups of two associated simplicial complexes---now known as Dowker complexes---are isomorphic. In its modern form, the full result asserts a functorial homotopy equivalence between the two Dowker complexes.
What can be said about relations defined on three or more sets?
We present a simple generalization to `multiway' relations of the form $R \subseteq I_1 \times I_2 \times \cdots \times I_m$. The theorem asserts functorial homotopy equivalences between $m$~multiway Dowker complexes and a variant of the rectangle complex of Brun and Salbu from their recent short proof of Dowker's Theorem. Our proof uses Smale's homotopy mapping theorem and factors through a `cellular Dowker lemma' that expresses the main idea in more general form.
To make the geometry more transparent, we work with a class of spaces called `prod-complexes' then transfer the results to simplicial complexes through a `simplexification' process. We conclude with a detailed study of ternary relations, identifying seven functorially defined homotopy types and twelve natural transformations between them.
\end{abstract}

\maketitle

\setcounter{tocdepth}{2}
\tableofcontents

\section{Introduction}
\label{sec:introduction}

\subsection{Background}
\label{subsec:motivation}

This paper originated as a breakout-group project at the workshop {\it Applied homological algebra beyond persistence diagrams} (June 19--23, 2023) hosted by the American Institute of Mathematics, then located in San Jose, California. The workshop was organized by Chad Giusti, Gregory Henselman-Petrusek, and Lori Ziegelmeier.

We recall that Dowker's Theorem~\cite{Dowker,Bjorner:1995,Chowdhury_Memoli} asserts a functorial homotopy equivalence between two simplicial complexes constructed from a binary relation $R \subseteq I \times J$ between two sets~$I,J$. Our task was to generalize the theorem to higher-order relations $R \subseteq I_1 \times \dots \times I_m$.

In this paper we provide such a generalization, the \emph{multiway Dowker theorem} (\ref{thm:multiway-main-theorem}, \ref{thm:multiway-simplicial-final}).
Adapting Brun and Salbu's approach~\cite{Brun_Salbu_2022} to the original theorem, the multiway version asserts functorial homotopy equivalences between a `relational product' space defined symmetrically with respect to the sets $I_1, \dots, I_m$ and each of $m$ quotient spaces obtained from it by collapsing the sets~$I_k$.
See the top two rows of Figure~\ref{fig:rco-iterated-all}.

\begin{figure}
\centering
\includegraphics[scale=1]{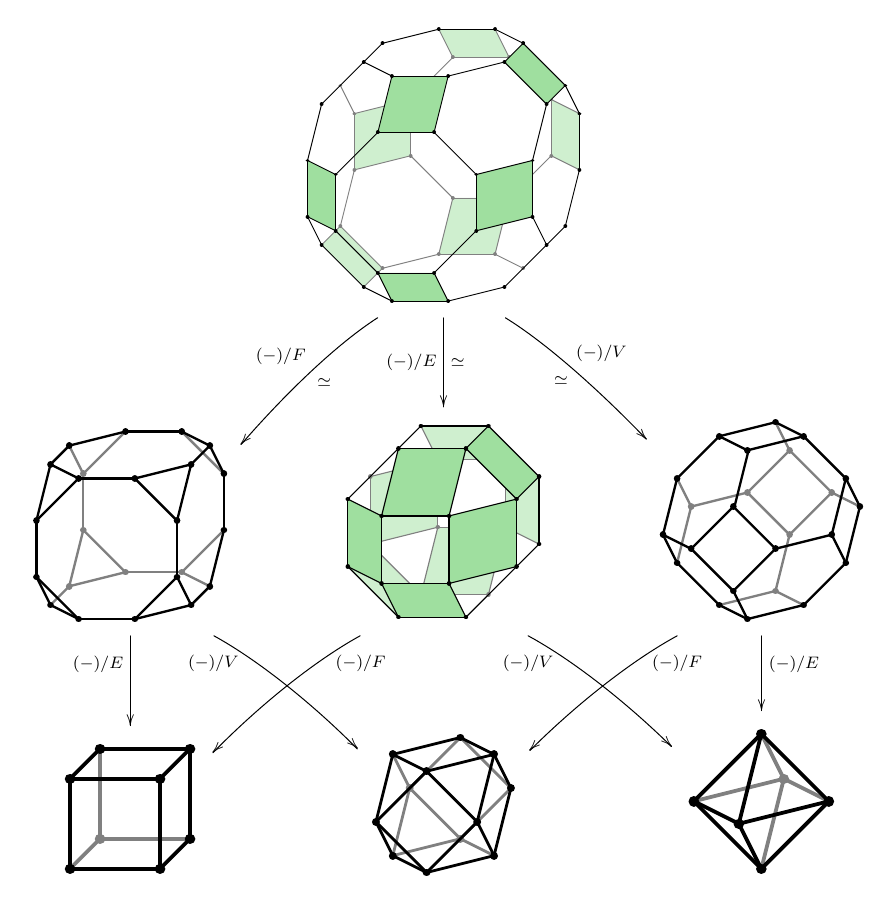}
\caption{A multiway relation $R \subseteq V \times E \times F$ is represented geometrically by a `relational product' (top) which is collapsed by successively quotienting out the factors $V,E,F$ (middle and lower rows).
The multiway Dowker theorem asserts that the top-level collapses are homotopy equivalences. Subsequent collapses need not be.
See Figures \ref{fig:rco}, \ref{fig:rco-iterated}, \ref{fig:rco-projected}, \ref{fig:ternary-natural-rco} for more on this example.
}
\label{fig:rco-iterated-all}
\end{figure}

The theorem may be expressed in various geometrical categories, simplicial complexes being the most familiar.
In the present work we advocate for `prod-complexes': subcomplexes of finite products of simplicial complexes. They form a subclass of the class of prod-simplicial complexes described in~\cite{Kozlov_2008}.
We believe that prod-complexes embody a particularly simple geometry for the theorem. Dowker's original result for two-way relations remains unchanged, its simplicial complexes being prod-complexes of order one. 
The new context illuminates the original theorem: the homotopy equivalence between those simplicial complexes factors naturally through a prod-complex of order two.
That said, we appreciate the need to connect with familiar practice (partly with a view to data science applications). A simplicial complex version of the main theorem is deduced by means of a `simplexification' functor that replaces each prod-complex with a simplicial complex that is homotopy equivalent.

For a brisk exposition, we adopt Smale's homotopy Vietoris mapping theorem~\cite{Smale_1957} as the technical mechanism for establishing homotopy equivalence. This approach leans point-set topological rather than algebraic or combinatorial. We formulate a `cellular Dowker lemma' for CW-complexes from which we derive the multiway Dowker theorem as a special case.
Certainly there exist other arguments leading to the same outcome. We plan to discuss some of these in future work.

\subsubsection*{Outline of the paper.} In the remainder of section~\ref{sec:introduction}, we give a brief history of Dowker's theorem and present the motivating example for the multiway theorem. In section~\ref{sec:CW-Smale}, we review CW-complexes and Smale's mapping theorem. In section~\ref{sec:prod}, we develop the basic facts about prod-complexes. In section~\ref{sec:multiway}, we revisit the original Dowker theorem and present the cellular Dowker lemma and the multiway Dowker theorem for prod-complexes. In section~\ref{sec:simplexification}, we study the simplexification functor and derive the multiway Dowker theorem for simplicial complexes. In section~\ref{sec:ternary}, we systematically organize various Dowker constructions associated to a ternary relation, identifying seven functorially defined homotopy types and twelve natural transformations between them.

\subsection{Brief history of Dowker's theorem}
\label{subsec:history}

C.H.~Dowker considered the following situation in his 1952 paper, `Homology groups of relations'~\cite{Dowker}. Let $R \subseteq I \times J$ be a binary relation. Dowker defined two simplicial complexes, on the sets $I,J$ respectively, which we call
\begin{align*}
    \Dowker_I(R)
    &=
    \{
    \text{non-empty finite $\sigma \subseteq I$}
    \mid
    \text{there exists $j \in J$ such that $\sigma \times \{j\} \subseteq R$}
    \}
    \\
    \Dowker_J(R)
    &=
    \{
    \text{non-empty finite $\tau \subseteq J$}
    \mid
    \text{there exists $i \in I$ such that $\{i\} \times \tau \subseteq R$}
    \}
\end{align*}
(for example Figure~\ref{fig:RGB}) and demonstrated that they have isomorphic homology.
This result and its generalizations have come to be known as `Dowker duality'.
Bj{\o}rner's 1995 book chapter~\cite{Bjorner:1995} gives a short proof along the following lines: each Dowker complex is equal to the {nerve} of a good cover of the other, so the result follows (twice, in fact) from the nerve theorem.

\begin{figure}
\centering
\includegraphics[scale=1]{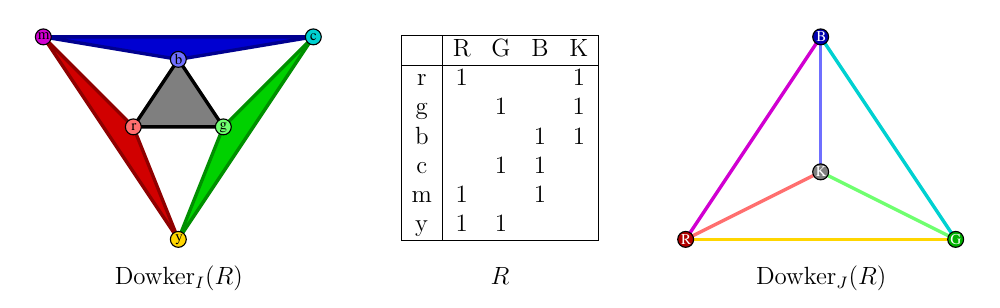}
    \caption{A binary relation and its Dowker complexes.
    }
    \label{fig:RGB}
\end{figure}

The isomorphism in homology arises, as expected, from a homotopy equivalence
\[
\xymatrix{
|\Dowker_I(R)|
    \ar@{<->}[r]^-{\simeq}
&
|\Dowker_J(R)|
}
\]
between the geometric realizations of the Dowker complexes~\cite{Bjorner:1995}. In 2018, Chowdhury and M\'{e}moli \cite{Chowdhury_Memoli} showed that this homotopy equivalence is functorial
in that we have a diagram
\[
\xymatrix{
|\Dowker_I(R)|
    \ar@{<->}[r]^-{\simeq}
    \ar[d]
&
|\Dowker_J(R)|
    \ar[d]
\\
|\Dowker_I(R')|
    \ar@{<->}[r]^-{\simeq}
&
|\Dowker_J(R')|
}
\]
that commutes up to homotopy when $R \subseteq R'$.
This statement is tailored to a key implication in topological data analysis:
Let $r(i,j)$ be any real-valued function on $I \times J$, and let
\[
R^t
= r^{-1} (-\infty,t]
= \{ (i,j) \in I \times J \mid r(i,j) \leq t \}
\]
for $t \in \R$. Then the {nested families} of simplicial complexes
\[
\big( \Dowker_I(R^t) \big)_{t \in \R}
\quad\text{and}\quad
\big( \Dowker_J(R^t) \big)_{t \in \R}
\]
have isomorphic persistence diagrams in each dimension.
Stronger functoriality statements can be found in Remark~\ref{rem:classical-functoriality} and in the remarkable paper of Virk~\cite{Virk_2021}.

The arguments in \cite{Dowker}, and especially \cite{Chowdhury_Memoli}, have a somewhat intricate look. The root cause is that the homotopy equivalence is not realizable by simplicial maps
\[
\xymatrix{
\Dowker_I(R)
    \ar@<3pt>[r]
&
\Dowker_J(R)
    \ar@<3pt>[l]
}
\]
but is constructed using barycentric subdivisions. The subsequent validation becomes surprisingly complicated for such a simple elegant result.
In 2022, Brun and Salbu~\cite{Brun_Salbu_2022} resolved this anomaly by constructing, functorially, a diagram of simplicial maps
\[
\xymatrix{
\Dowker_I(R)
&
\rect(R)
    \ar[r]
    \ar[l]
&
\Dowker_J(R)
}
\]
mediated by a simplicial `rectangle complex' defined on the vertex set $I \times J$.
Each map is a homotopy equivalence on geometric realizations. As is often the case, the \emph{inverse} homotopy equivalences cannot be realized by simplicial maps. Retrospectively it becomes apparent that complications originate from attempts to construct a map directly from one Dowker complex to the other, rather than through a suitable intermediary.

More recently, in 2024, Brun and Grinberg~\cite{Brun_Grinberg_2024} and, independently, one of the authors of the present paper Yoon~\cite{Yoon_2024} constructed an intermediate space (the `biclique complex' or `relational join') with homotopy equivalences induced by simplicial maps \emph{from} the two Dowker complexes to it. Again the inverse homotopy equivalences are not simplicial. Two other proofs are given in~\cite{Yoon_2024}, one based on poset adjunctions and the other involving a `relational product' that plays a central role in the present paper.
See Figure~\ref{fig:rhombicuboctahedron} for a (possibly unhelpful) preview of this last construction.

\begin{figure}
\centering
\includegraphics[scale=1]{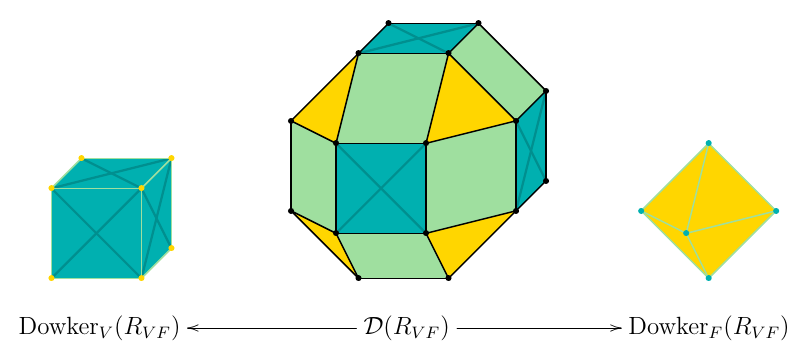}
    \caption{Let $V, F$ be the sets of vertices and faces of a cube. Let $R_{VF}$ be the set of pairs $(v,f)$ with $v$ in~$f$.
    Then $\Dowker_V(R_{VF})$ is a cube with its squares `thickened' into tetrahedra, $\Dowker_F(R_{VF})$ is an octahedron, and the relational product $\dowker(R_{VF})$ is a rhombicuboctahedron with the six `cubic' squares thickened.
    All are homotopy equivalent to $S^2$.
    }
    \label{fig:rhombicuboctahedron}
\end{figure}

\subsection{Motivating example}
\label{subsec:hexagon-example}

We consider the possibilities for a multiway Dowker theorem. A ternary relation between three sets $I, J, K$ is a subset $R \subseteq I \times J \times K$. What geometrical objects does it encode and what are the relationships between those objects?

A first guess would be to define three simplicial complexes on the vertex sets $I, J, K$:
\begin{align*}
\Dowker_I(R)
&=
\{
\sigma \subseteq I
\mid
\text{there exist $j \in J$, $k \in K$ such that $\sigma \times \{j\} \times \{k\} \subseteq R$}
\}
\\
\Dowker_J(R)
&=
\{
\tau \subseteq J
\mid
\text{there exist $i \in I$, $k \in K$ such that $\{i\} \times \tau \times \{j\} \subseteq R$}
\}
\\
\Dowker_K(R)
&=
\{
\upsilon \subseteq K
\mid
\text{there exist $i \in I$, $j \in J$ such that $\{i\} \times \{j\} \times \upsilon \subseteq R$}
\}
\end{align*}
This is quickly seen to be a failure. Let $R_*$ be any relation on sets $I,J$, let $K = \{*\}$ be a singleton set, and define $R = \{ (i,j,*) \mid (i,j) \in R_* \}$. Then
\[
\Dowker_I(R) = \Dowker_I(R_*)
\quad \text{and} \quad
\Dowker_J(R) = \Dowker_J(R_*)
\]
whereas $\Dowker_K(R)$ is a single point. The first two spaces, homotopy equivalent by Dowker's theorem, can have any simplicial homotopy type; but there is no information in the third space. Collapsing two of the factors $I,J,K$ is enough to destroy homotopy information. See also section~\ref{subsec:iterated}.

The following example points the way forward. Let $I = J = K$ be doubleton sets. The 3-dimensional array
\[
\left[
\begin{array}{cc|cc}
    1 & 0 & 1 & 1 \\
    1 & 1 & 0 & 1
\end{array}
\right]
\]
represents the indicator function of a relation $R \subseteq I \times J \times K$ containing six out of the possible eight elements.
Instead of constructing three spaces on the vertex sets $I, J, K$ we construct a central space on the vertex set $I \times J \times K$ along with projection maps onto three spaces on the respective vertex sets $J \times K$, $I \times K$, $I \times J$. The central object has an edge for each pair of adjacent 1s in the array.
See Figure~\ref{fig:hexagon}.

\begin{figure}
    \centering
    \includegraphics[width=0.6\linewidth]{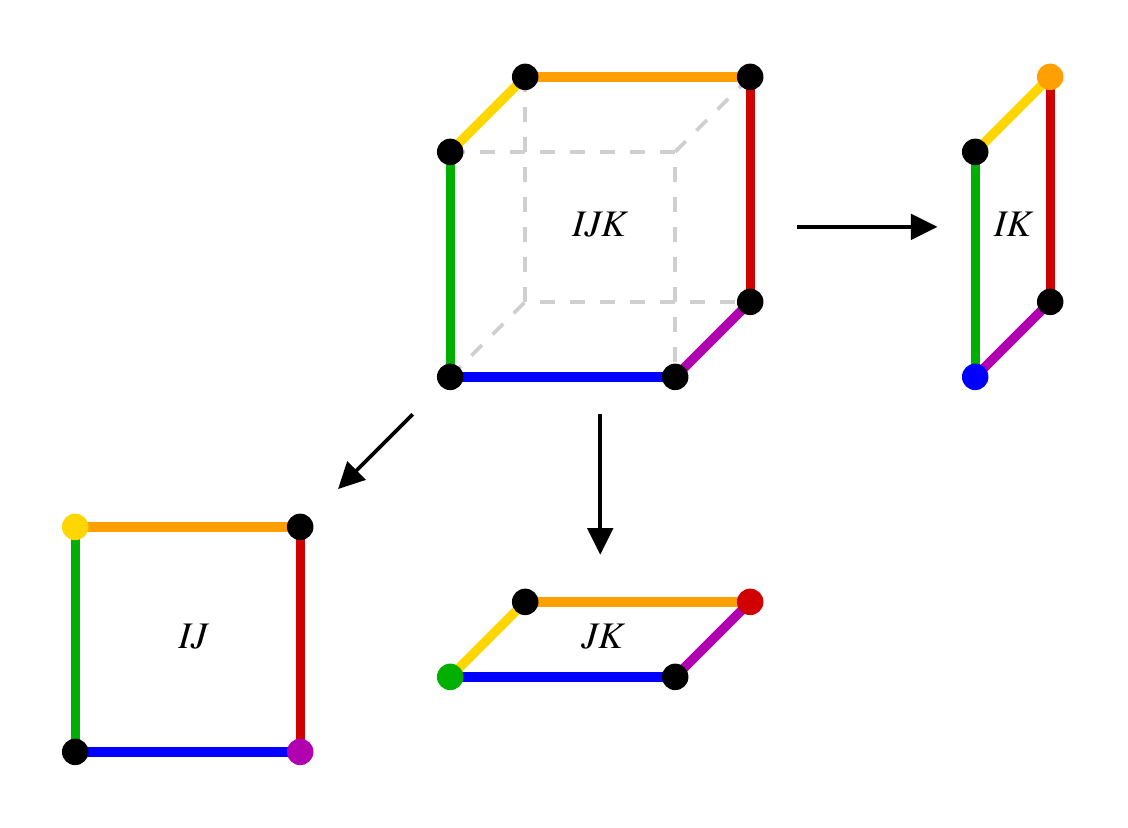}
    \caption{A cell complex $\cuboid_{IJK}$ built from a three-dimensional array with projections onto three cell complexes $\Dowker_{JK}$, $\Dowker_{IK}$, $\Dowker_{IJ}$. Each projection is a homotopy equivalence from a hexagon to a square.
    }
    \label{fig:hexagon}
\end{figure}

A similar structure applies to $k$-way relations: we have a product-like central object along with $k$~projection maps, each a homotopy equivalence collapsing one of the factors.

It remains to settle the details of the construction and confirm that the projection maps are the homotopy equivalences we suspect them to be. Since we are constructing complexes whose vertex sets are products of sets, we are guided towards the use of prod-complexes. The central object is defined in section~\ref{subsec:multiway-prod} and the three projections are obtained through a quotienting process described earlier in section~\ref{subsec:dowkerian-quotients}.
For the simplicial complex version, the reader may skip to section~\ref{subsec:multiway-simplicial} to get straight to those definitions.

\section{CW-complexes and Smale's theorem}
\label{sec:CW-Smale}

In this section we review the basic theory of CW-complexes \cite{Whitehead_1949,Hatcher_2002} and present a theorem of Smale~\cite{Smale_1957} that yields homotopy equivalences of finite CW-complexes. To complete the toolbox, we give a protocol for extending homotopy equivalence results from finite CW-complexes to infinite CW-complexes.

Cell complexes generally come with a partial order on their cells. Subcomplexes correspond to downwardly closed subsets, or `down-sets'. The following notation is convenient: if $A$ is a poset and $B$ is a subset, we write $B \leq A$ to indicate that $B$ is a down-set.

\subsection{CW-complexes}
\label{subsec:CW}

J.H.C.~Whitehead~\cite{Whitehead_1949} introduced CW-complexes in 1949 as a class of cell complex sufficiently versatile for the demands of homotopy theory. We recommend Hatcher~\cite[Appendix~1]{Hatcher_2002} or Brown~\cite[Section~4.7]{Brown_2006} for an exposition of the basic theory. In this section we review the notation and properties that we need later on.

A CW-complex $X$ is an identification space of a family of closed disks $(\overline{D}_\alpha)$ of various dimensions, glued together according to certain rules \cite{Whitehead_1949,Hatcher_2002,Brown_2006}. The rules imply that the set~$X$ is partitioned into `open cells' $e_\alpha$ that are the homeomorphic image of the open disks $D_\alpha = \operatorname{int}(\overline{D}_\alpha)$. Thus we have a diagram
\[
\xymatrix{
\displaystyle
\coprod_{\alpha \in A}
D_\alpha
    \ar[r]
&
\displaystyle
\coprod_{\alpha \in A}
\overline{D}_\alpha
    \ar@{>>}[r]
&
X
}
\]
where the surjective second arrow is a quotient map (a subset of~$X$ is closed iff its pullback to each $\overline{D}_\alpha$ is closed) and the composite map is a continuous bijection that is a homeomorphic embedding on each summand (but not itself a homeomorphism except in trivial cases).
The quotient map restricts, for each~$\alpha$, to a map carrying the closed disk $\overline{D}_\alpha$ onto the closure $\overline{e}_\alpha$ of~$e_\alpha$. This map need not be a bijection; if it is then it is a homeomorphism and we call $\overline{e}_\alpha$ a `closed cell'. In general we call the compact set $\overline{e}_\alpha$ a `cell closure'.
A CW-complex is called \emph{regular} if every $\overline{e}_\alpha$ is a closed cell.

The \emph{carrier} of a point $x \in X$ is the unique $e_\alpha$ containing~$x$.
We caution the reader that $e_\alpha$, although called an `open cell', is not an open subset of~$X$ unless $\alpha$ is a maximal element in the partial order we now describe.

The partial order on the indexing set~$A$ is defined to be the smallest transitive relation that includes $\alpha \geq \alpha'$ whenever $\overline{e}_\alpha \cap e_{\alpha'} \ne \emptyset$. It is a consequence of the construction rules that $\alpha > \alpha'$ implies $\dim(D_\alpha) > \dim(D_{\alpha'})$ ensuring that this relation, evidently reflexive and transitive, is antisymmetric.

A subcomplex of $X$ is a subspace
\[
X_P
=
\bigcup_{\alpha \in P} \overline{e}_\alpha
=
\bigcup_{\alpha \in P} {e}_\alpha
\]
where $P \leq A$ is a down-set. This space has the same topology whether regarded as a subspace of~$X$ or as a quotient space of the disjoint union of closed disks $\coprod_{\alpha \in P} \overline{D}_\alpha$.

\begin{remark}
\label{rem:CW-properties}
We use the following properties of CW-complexes; see \cite[Appendix 1]{Hatcher_2002} or \cite[Section~4.7]{Brown_2006} or the original source~\cite{Whitehead_1949} for proofs and additional context.

\begin{enumerate}
    \item 
    CW-complexes are locally contractible Hausdorff spaces.
    \item
    Finite CW-complexes are compact and metrizable \cite[Corollary A10]{Hatcher_2002}.
    \item
    Every compact subset of a CW-complex is contained in a finite subcomplex.
    Specializing to singleton subsets, every CW-complex is the union of its finite subcomplexes.
    Specializing to cell closures, every finite set of cells is contained in a finite subcomplex.
    \item
    A subset of a CW-complex is closed 
    iff its intersection with every cell closure is closed
    iff its intersection with every finite subcomplex is closed
    iff its intersection with every compact set is closed.
    \item
    Whitehead's theorem: a weak homotopy equivalence of CW-complexes is a homotopy equivalence \cite[Theorem~1]{Whitehead_1949}.
\end{enumerate}
    
\end{remark}

We consider products. Given two CW-complexes presented as quotient spaces
\[
\xymatrix{
\displaystyle
\coprod_{\alpha \in A}
\overline{D}_\alpha
    \ar@{>>}[r]
&
X
}
\quad
\text{and}
\quad
\xymatrix{
\displaystyle
\coprod_{\beta \in B}
\overline{D}_\beta
    \ar@{>>}[r]
&
Y
}
\]
we get a continuous surjection
\[
\xymatrix{
\displaystyle
\coprod_{(\alpha,\beta) \in A \times B}
\overline{D}_\alpha \times \overline{D}_\beta
    \ar@{>>}[r]
&
X \times Y
}
\]
from the disjoint union of the closed disks $\overline{D}_{(\alpha, \beta)} = \overline{D}_\alpha \times \overline{D}_\beta$ onto $X \times Y$ with the product topology.
If we give $X \times Y$ the \emph{quotient topology} (induced from the topology on the disjoint union of closed disks, and equal to or finer than the product topology) it turns out that this yields a CW-complex structure on $X \times Y$.

\begin{remark}
    \label{rem:CG-product}
    The subtlety is that the quotient topology need not be the same as the product topology. In certain favourable circumstances (such as when at least one of $X,Y$ is finite or locally finite~\cite{Whitehead_1949}) the two topologies agree, but in general the quotient topology can be strictly finer~\cite{Dowker_1952b}.
    It is natural to hope that the quotient topology can be constructed independently of the CW-complex representations of $X$ and $Y$.
    There is an affirmative answer from the theory of \emph{compactly generated spaces} \cite[Appendix~1]{Hatcher_2002}: the quotient topology agrees with the `compactly generated product topology' in all cases.
\end{remark}

The same discussion applies to all finite products.
We adopt the following convention:

\begin{convention}
    We work in the category of compactly generated Hausdorff (CGH) spaces. This includes all locally compact Hausdorff spaces and all CW-complexes. The expression $X_1 \times \dots \times X_m$ will denote the product of topological spaces $X_1, \dots, X_m$ with the compactly generated topology rather than the usual product topology. When the spaces are locally compact the two topologies agree. In general the set of maps from a CGH space to a finite product remains equal to the product of the sets of maps to the individual factors.
\end{convention}

For finite complexes the convention changes nothing. Its sole purpose is to allow us to handle infinite complexes---in particular, finite products of infinite complexes---correctly.

\subsection{Smale's mapping theorem}
\label{subsec:smale}

There is a long history of theorems which assert that a map of spaces $f : X \to Y$ whose point fibers $f^{-1}(y)$ are highly connected is itself highly connected (homologically or homotopically). The archetype is the Begle--Vietoris mapping theorem whose conclusion relates the reduced Vietoris homology of the two spaces
\cite{Vietoris_1927,Begle_1950}.

We want a homotopy theorem. In 1957, Smale \cite{Smale_1957} proved the following:

\begin{theorem}[Smale's mapping theorem]
    \label{thm:smale-v1}
    Let $f : X \to Y$ be proper and onto where $X$ and $Y$ are 0-connected, locally compact, separable metric spaces, $X$ is $LC^n$, and for each $y \in Y$, $f^{-1}(y)$ is $LC^{n-1}$ and $(n-1)$-connected. Then
    \\
    \null\quad
    (A) $Y$ is $LC^n$ and
    \\
    \null\quad
    (B) the induced homomorphism $f_{\sharp} : \pi_r(X) \to \pi_r(Y)$ is an isomorphism onto for all $0 \leq r \leq n-1$ and onto for $r=n$.
    \qed
\end{theorem}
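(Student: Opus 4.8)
The plan is to follow Smale's original route: isolate a single \emph{approximate lifting lemma} for maps out of low‑dimensional polyhedra, prove it by induction over skeleta, and then read off conclusions (A) and (B) by routine manipulations with spheres and disks. The lemma to aim for is: if $P$ is a finite polyhedron with $\dim P \le n$, $Q \subseteq P$ a subpolyhedron, $G : P \to Y$ continuous, and $F : Q \to X$ a partial lift with $f \circ F = G|_Q$, then $F$ extends to a map $\widetilde{G} : P \to X$ for which $f \circ \widetilde{G}$ is homotopic to $G$ rel $Q$, with the homotopy as small as desired in a fixed metric on $Y$. One proves this by triangulating $P$ very finely and extending the partial lift cell by cell over the skeleta: having lifted $\partial\Delta$ for a $k$‑cell $\Delta$ into a small open set of the form $f^{-1}(U)$, one must fill the resulting $(k-1)$‑sphere with a $k$‑disk inside a slightly larger such set, which is possible exactly when $k \le n$ — hence the dimension hypothesis on $P$. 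The accumulated discrepancy between $f \circ \widetilde{G}$ and $G$ is then small and can be homotoped away.

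\textbf{The main obstacle} is the ingredient that fuels that induction: one must know that each fiber $f^{-1}(y)$ has a neighborhood basis of open sets $V = f^{-1}(U)$ that are ``$n$‑connected in a controlled, shape‑theoretic sense'' — precisely, that for $k \le n-1$ every small map $S^k \to V$ bounds a small $k$‑disk in a slightly larger neighborhood $V'$, with a matching statement one dimension up. This is a stability assertion: the $(n-1)$‑connectedness and the $LC^{n-1}$ property of the \emph{compact} fiber must be transported to its neighborhoods in $X$, uniformly enough to run the skeletal argument. This is where properness enters — it forces $f^{-1}(y)$ to be compact and makes the sets $f^{-1}(U)$ a neighborhood basis of it — together with separability and metrizability and the local connectivity hypotheses on both $X$ (so that fillings stay small) and the fibers. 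I expect this ``neighborhoods of a fiber inherit its connectivity'' lemma to be the technical heart of the argument; everything afterward is bookkeeping.

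Granting the lemma, conclusion (B) follows quickly. For surjectivity of $f_{\sharp}$ on $\pi_r$ with $r \le n$, represent a class by $G : S^r \to Y$, apply the lemma with $P = S^r$ and $Q$ the basepoint sent to a chosen point of $f^{-1}$ of the basepoint of $Y$, and note $f_{\sharp}[\widetilde{G}] = [f\circ\widetilde{G}] = [G]$. For injectivity on $\pi_r$ with $r \le n-1$, suppose $\widetilde{g} : S^r \to X$ has $f \circ \widetilde{g}$ nullhomotopic in $Y$, witnessed by $G : D^{r+1} \to Y$ with $G|_{S^r} = f \circ \widetilde{g}$; since $\dim D^{r+1} = r+1 \le n$, the lemma with $P = D^{r+1}$, $Q = S^r$, $F = \widetilde{g}$ yields $\widetilde{G} : D^{r+1} \to X$ extending $\widetilde{g}$ with $f\circ\widetilde{G} \simeq G$ rel $S^r$, so $\widetilde{G}$ is a nullhomotopy of $\widetilde{g}$ in $X$ and its class vanishes. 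Basepoint bookkeeping causes no trouble since every homotopy is taken rel the relevant subpolyhedron; the case $r=0$ (bijection on path components) is the instance $P=D^1$.

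Finally, conclusion (A), that $Y$ is $LC^n$ at a point $y_0$: given a neighborhood $W$ of $y_0$, one must produce $W' \subseteq W$ such that every map $S^k \to W'$ with $k \le n$ is nullhomotopic in $W$. Choose $x_0 \in f^{-1}(y_0)$ and, using properness, shrink $W$ to $W_1$ so that $f^{-1}(W_1)$ lies inside a neighborhood of $f^{-1}(y_0)$ small enough that $LC^n$ of $X$ supplies fillings of $\le n$‑dimensional spheres that remain inside $f^{-1}(W)$. Then for small $W'$ and $g : S^k \to W'$, lift $g$ approximately into $f^{-1}(W_1)$ by the lemma, fill the lifted sphere by a small disk in $f^{-1}(W)$ via $LC^n$ of $X$, and push that disk down by $f$ to get a nullhomotopy of $g$ inside $W$. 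Choosing $W_1$ and then $W'$ in this order — again a compactness‑of‑fibers argument — finishes the proof.
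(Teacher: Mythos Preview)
The paper does not prove this theorem: it is quoted from Smale~\cite{Smale_1957} and marked \qed. However, the paper does comment on the structure of Smale's argument, and that commentary is directly relevant to your sketch. The paper observes that ``the proof of (B) \emph{assuming (A)} is direct and contained entirely within Smale's paper'', whereas ``the proof of (A) follows a circuitous path that invokes much external technology (including a body of work that Smale calls the `Begle--Vietoris theory')''. Your treatment of (B) via an approximate lifting lemma over skeleta is indeed Smale's route to (B), and your identification of the ``neighborhoods of a fiber inherit its connectivity'' step as the technical heart is accurate.

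The genuine gap is in your handling of (A). Your lifting lemma, as you state it, concludes that $f\circ\widetilde{G}$ is \emph{homotopic} to $G$ rel $Q$ by a small homotopy. That final step --- promoting ``$f\circ\widetilde{G}$ is $C^0$-close to $G$'' to ``$f\circ\widetilde{G}\simeq G$'' --- already requires $Y$ to be $LC^n$ (close maps of $k$-spheres into an $LC^k$ space are homotopic through small homotopies). So the lemma in the form you wrote it presupposes (A); invoking it to prove (A) is circular. In your sketch of (A) you lift $g:S^k\to W'$ ``by the lemma'', fill in $X$, and push down, but at the end you still need $f\circ\widetilde{g}\simeq g$ inside $W$, and nothing you have established about $Y$ gives you that. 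This is exactly why Smale does not prove (A) this way: he imports the Begle--Vietoris machinery to get (A) independently, and only then runs the lifting argument for (B). A direct attack on (A) along your lines can be made to work, but it requires an explicit induction on $k$ (establish $LC^0$ of $Y$ by hand from surjectivity and connectedness of fibers; then use $LC^{k-1}$ of $Y$ to justify the small-homotopy step for $S^{k-1}$, which in turn yields $LC^k$). You have not set up that induction, and without it the argument for (A) does not close.

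For the purposes of this paper none of this matters: the authors sidestep (A) entirely by assuming $Y$ locally contractible in their reformulation (Theorem~\ref{thm:smale-v2}), so only the (B)-style lifting argument is actually used downstream.
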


The $LC^n$ condition on a space may be unfamiliar;
it means that every neighborhood of a point contains a sub-neighborhood into which every map of a $k$-sphere, for $k = 0, \dots, n$, is null-homotopic in the larger original neighborhood.
It is intermediate in strength between two perhaps more familiar conditions:
\[
\text{locally $n$-connected}
\;\; \Rightarrow \;\;
LC^n
\;\; \Rightarrow \;\;
\text{semi-locally $n$-connected}
\]
In particular if a space is locally contractible, meaning that every neighborhood of a point contains a contractible sub-neighbourhood, then it is $LC^n$ for all~$n$.

The proof of (B) assuming (A) is direct and contained entirely within Smale's paper.
The proof of (A) follows a circuitous path that invokes much external technology (including a body of work that Smale calls the `Begle--Vietoris theory'). That part of the argument can be circumvented by presupposing $Y$ to be locally contractible, as it is in our applications.

Here is a convenient formulation incorporating these remarks:

\begin{theorem}[Smale's mapping theorem, version 2]
    \label{thm:smale-v2}
    Let $f : X \to Y$ be a continuous map between locally contractible compact metrizable spaces (such as finite CW-complexes). Suppose each fiber $f^{-1}(y)$ is contractible (in particular non-empty) and locally contractible. Then $f$ is a weak homotopy equivalence. In particular if $X,Y$ are finite CW-complexes then $f$ is a homotopy equivalence.
\end{theorem}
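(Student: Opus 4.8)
The plan is to deduce version~2 directly from Theorem~\ref{thm:smale-v1} by checking that its hypotheses hold, letting the connectivity parameter run to infinity, and then invoking Whitehead's theorem for the last sentence.

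First I would translate the hypotheses. A compact metrizable space is automatically locally compact and separable metric, so $X$ and $Y$ qualify on those counts. Since local contractibility implies $LC^n$ for every $n$ (as recorded just after Theorem~\ref{thm:smale-v1}), $X$ is $LC^n$ and each fiber $f^{-1}(y)$ is $LC^{n-1}$ for all $n$; and a contractible space is $(n-1)$-connected for all $n$. A continuous map out of a compact space into a Hausdorff space is closed, and in particular proper (the preimage of a compact, hence closed, set is closed in compact $X$, hence compact); and since every fiber is non-empty by contractibility, $f$ is onto. The only hypothesis of Theorem~\ref{thm:smale-v1} not yet accounted for is $0$-connectedness of $X$ and $Y$, which version~2 does not assume, so the next step handles that.

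To reduce to the connected case, note that local contractibility implies local connectedness, so the components of $X$ and of $Y$ are open and closed; by compactness there are finitely many of each, and each is itself compact, metrizable, and locally contractible. For a component $Y_0 \subseteq Y$, the preimage $f^{-1}(Y_0)$ is clopen in $X$, and it is connected: if it split as a disjoint union $A \sqcup B$ of non-empty clopen sets, then $f(A)$ and $f(B)$ would be closed (closed map), cover $Y_0$, and be disjoint --- a common value would give a fiber meeting both $A$ and $B$, contradicting connectedness of fibers --- contradicting connectedness of $Y_0$. So $f$ restricts to a map $f^{-1}(Y_0) \to Y_0$ of $0$-connected spaces satisfying all remaining hypotheses, and Theorem~\ref{thm:smale-v1} applies to it for every $n$. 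The same fiber-connectedness observation shows $f$ is a bijection on $\pi_0$. Fixing a basepoint $x \in X$ and an integer $r \ge 0$ and applying the theorem with $n = r+1$ shows $f_{\sharp} : \pi_r(X,x) \to \pi_r(Y,f(x))$ is an isomorphism; hence $f$ is a weak homotopy equivalence. Finally, when $X$ and $Y$ are finite CW-complexes, Whitehead's theorem (Remark~\ref{rem:CW-properties}) upgrades this to a genuine homotopy equivalence.

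The point to be careful about, rather than a genuine obstacle, is that this argument uses only conclusion~(B) of Theorem~\ref{thm:smale-v1} and never its conclusion~(A): assuming $Y$ locally contractible turns ``$Y$ is $LC^n$'' into a hypothesis instead of something to be proved, which is precisely what lets us bypass the Begle--Vietoris machinery referenced in the surrounding discussion. The remaining work is routine bookkeeping --- the reduction to (compact, open) components and the verification that all the point-set conditions survive restriction of $f$ to a component of $Y$.
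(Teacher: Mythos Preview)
Your proposal is correct and follows essentially the same route as the paper's proof: verify the point-set hypotheses of Theorem~\ref{thm:smale-v1}, reduce to the connected case by using connectedness of fibers to show that $f$ induces a bijection on components, apply Smale's theorem componentwise for all~$n$, and finish with Whitehead. Your write-up is in fact more explicit than the paper's about the hypothesis checks (properness, $LC^n$, etc.)\ and about the observation that only conclusion~(B) is needed once $Y$ is assumed locally contractible.
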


\begin{proof}
    Suppose first that $X,Y$ are path-connected. Theorem~\ref{thm:smale-v1} immediately implies that $f$ induces isomorphisms of all homotopy groups, making it a weak homotopy equivalence.
    
    In general, since $X,Y$ are compact and locally path-connected they have finitely many connected components; each of these is compact and path-connected. 
    Let $Y'$ be a component of~$Y$. Then $f^{-1}(Y')$ is a union of components of $X$. The images of these components under $f$ form a disjoint (since the $f^{-1}(y)$ are connected) cover (since the $f^{-1}(y)$ are non-empty) of $Y'$ by finitely many compact connected sets. But $Y'$ is connected so there must be exactly one set in this cover.
    Thus $f$ induces a bijection between the components of $X$ and the components of $Y$. Applying Theorem~\ref{thm:smale-v1} component-wise, we find that $f$ is a weak homotopy equivalence.

    The last assertion follows from Whitehead's theorem that a weak homotopy equivalence between CW-complexes is a homotopy equivalence~\cite[Theorem~1]{Whitehead_1949}.
\end{proof}

\subsection{Infinite complexes}
\label{subsec:infinite}

We include a technical result to smooth the narrative later on. This is a finite-to-infinite principle which allows us to deduce a homotopy equivalence between infinite CW-complexes from homotopy equivalances between finite subcomplexes.
Readers primarily interested in finite Dowker complexes may skip this section.

\begin{proposition}
    \label{prop:finite-to-infinite}
    Let $\phi : X \to Y$ be a continuous map between CW-complexes. Let $(X_\beta), (Y_\beta)$ be families of finite subcomplexes of $X,Y$ with the following properties:
    \begin{itemize}
        \item
        For all $\beta$, the map $\phi$ restricts to a homotopy equivalence $\phi_\beta : X_\beta \to Y_\beta$.
        \item
        For all $F,G$ finite subcomplexes of $X,Y$ there exists $\beta$ such that $F \subseteq X_\beta$ and $G \subseteq Y_\beta$.        
    \end{itemize}
    Then $\phi$ is a homotopy equivalence.
\end{proposition}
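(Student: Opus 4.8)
The plan is to establish that $\phi$ is a weak homotopy equivalence and then invoke Whitehead's theorem (Remark~\ref{rem:CW-properties}(5)) to upgrade this to a genuine homotopy equivalence. To show $\phi_*\colon \pi_r(X,x_0) \to \pi_r(Y,\phi(x_0))$ is a bijection for every $r \geq 0$ and every basepoint $x_0$, I would exploit the key fact from Remark~\ref{rem:CW-properties}(3) that every compact subset of a CW-complex lies in a finite subcomplex, and then use the hypothesis that any pair of finite subcomplexes $F \subseteq X$, $G \subseteq Y$ is dominated by some $(X_\beta, Y_\beta)$ on which $\phi$ restricts to a homotopy equivalence.

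\emph{Surjectivity of $\phi_*$.} Given a class in $\pi_r(Y)$ represented by a map $g\colon S^r \to Y$, its image is compact, so it lies in some finite subcomplex $G$. Choose $\beta$ with $G \subseteq Y_\beta$ (and, say, the finite subcomplex containing the basepoint inside $X_\beta$). Since $\phi_\beta\colon X_\beta \to Y_\beta$ is a homotopy equivalence, it is in particular a weak equivalence, so $g$ (viewed in $Y_\beta$) lifts up to homotopy through $\phi_\beta$ to a map $S^r \to X_\beta \subseteq X$; composing with the inclusion shows the original class is hit by $\phi_*$. A mild point to check: basepoints must be handled coherently, which is why the second hypothesis is stated for \emph{pairs} of subcomplexes — I would fix a basepoint $x_0 \in X$ lying in some finite subcomplex and always choose $\beta$ large enough to contain it along with the relevant $F$ and $G$.

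\emph{Injectivity of $\phi_*$.} Suppose $f_0, f_1\colon S^r \to X$ have $\phi \circ f_0 \simeq \phi \circ f_1$ in $Y$. The union of the images of $f_0, f_1$ and of the homotopy $S^r \times I \to Y$ is compact, so there is a finite subcomplex $F \subseteq X$ containing the images of $f_0,f_1$ and a finite subcomplex $G \subseteq Y$ containing the image of the homotopy. Pick $\beta$ with $F \subseteq X_\beta$, $G \subseteq Y_\beta$, and (enlarging $G$ if necessary) $\phi(F) \subseteq G$. Then $\phi_\beta \circ f_0 \simeq \phi_\beta \circ f_1$ inside $Y_\beta$, and since $\phi_\beta$ is a homotopy equivalence it induces an injection on $\pi_r$, so $f_0 \simeq f_1$ inside $X_\beta \subseteq X$. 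Hence $[f_0] = [f_1]$ in $\pi_r(X)$. The case $r = 0$ (bijection on path components) is the same argument with $S^0$ and $S^0 \times I$, again using that compact sets and finitely many points lie in finite subcomplexes.

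The main obstacle is not any single deep step but the bookkeeping of basepoints and the need to repeatedly enlarge finite subcomplexes so that $\phi$ maps the chosen subcomplex of $X$ into the chosen subcomplex of $Y$; this is exactly what the cofinality hypothesis (second bullet) is designed to supply, together with the observation that $\phi(X_\beta)$, being compact, always sits in \emph{some} finite subcomplex of $Y$, so we may always pass to a larger $\beta$. Once weak equivalence is established, the conclusion is immediate from Whitehead's theorem since $X$ and $Y$ are CW-complexes. I would also remark that neither $X$ nor $Y$ need be expressible as the increasing union of the $X_\beta$ or $Y_\beta$ respectively — only the cofinality condition matters — which is why the statement is phrased as it is.
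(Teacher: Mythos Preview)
Your proposal is correct and follows essentially the same route as the paper: reduce to weak equivalence via Whitehead's theorem, then prove surjectivity and injectivity of $\phi_*$ on $\pi_r$ by pushing the relevant compact images into a suitable $(X_\beta,Y_\beta)$ where $\phi_\beta$ is already known to induce isomorphisms. The paper's write-up is slightly terser (it uses the closed disk $D$ rather than $S^r$ and omits the basepoint bookkeeping you spell out), but the argument is the same; note also that the condition $\phi(X_\beta)\subseteq Y_\beta$ is already built into the hypothesis that $\phi$ restricts to $\phi_\beta\colon X_\beta\to Y_\beta$, so no extra enlargement step is needed there.
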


Since every compact subset of a CW-complex is contained in a finite subcomplex,
the second condition implies that we can cover arbitrary compact subsets $F \subseteq X$, $G \subseteq Y$ by $X_\beta, Y_\beta$ for some~$\beta$.

\begin{proof}
    By the Whitehead theorem it suffices to show that each homomorphism
    \[
    \pi_n(\phi,x) : \pi_n(X,x) \to \pi_n(Y,\phi(x))
    \]
    is surjective and injective. Both parts of the proof use the commutative diagram
    \[
    \xymatrix{
    \pi_n(X,x)
        \ar[rrr]^{\pi_n(\phi,x)}
    &&&
    \pi_n(Y,\phi(x))
    \\
    \pi_n(X_\beta,x)
        \ar[u]
        \ar[rrr]^{\pi_n(\phi_\beta,x)}_{\cong}
    &&&
    \pi_n(Y_\beta,\phi(x))
        \ar[u]
    }
    \]
    for $\beta$ chosen as needed. We write $D$ for the closed $n$-disk.

    For surjectivity, consider an element $[g] \in \pi_n(Y,\phi(x))$ represented by a map $g : D \to Y$. The image $g(D)$ is a compact subset of $Y$ and therefore contained in some $Y_\beta$ where it represents a class $[g]_\beta \in \pi_n(Y_\beta, \phi(x))$.
    Since the lower arrow is surjective, this is the image of a class $[f]_{\beta} \in \pi_n(X_\beta,x)$. Then $[f] \in \pi_n(X,x)$ maps to $[g]$.

    For injectivity, suppose $[f_1], [f_2] \in \pi_n(X,x)$ with $[\phi f_1] = [\phi f_2]$ in $\pi_n(Y,\phi(x))$. Then there is a homotopy $H: D \times [0,1] \to Y$ between $\phi f_1$, $\phi f_2$. Select $\beta$ so that $X_\beta$ contains the compact set $f_1(D) \cup f_2(D)$ and $Y_\beta$ contains the compact set $H(D \times [0,1])$.
    Then $[\phi f_1]_\beta = [\phi f_2]_\beta$ in $\pi_n(Y_\beta, \phi(x))$. Since the lower arrow is injective, this implies that $[f_1]_\beta = [f_2]_\beta$ in $\pi_n(X_\beta,x)$ and hence $[f_1] = [f_2]$ in $\pi_n(X,x)$.
\end{proof}

\section{Prod-complexes}
\label{sec:prod}

We use the term `{prod-complex}' to refer to a subcomplex of a finite product of simplicial complexes. In this section we develop the basic properties of these cell complexes.

\subsection{Simplicial complexes}
\label{subsec:simp}

We begin by reviewing simplicial complexes. The primary object is the {abstract simplicial complex} (we typically omit the word `abstract'). We associate to it a {geometric realization}. This is a regular CW-complex, and may sometimes be called a geometric simplicial complex (we typically do not omit the word `geometric'). A map of simplicial complexes induces a continuous map of geometric realizations.

\subsubsection*{Simplicial complexes.}
For any set~$I$, we define the \emph{complete simplex}\footnote{%
When $I$ is infinite it may be preferable to call it the `complete simplicial complex' on~$I$.
}
\emph{on~$I$} to be the set
\[
\Delta I
=
\{
\sigma \subseteq I \mid \text{$\sigma$ is non-empty and finite}
\}
\]
partially ordered by inclusion.
A \emph{simplicial complex} on~$I$ is defined to be a down-set $S \leq \Delta I$. The elements of~$S$ are its \textit{simplices}. A simplex $\sigma$ with $k+1$ elements is called a $k$-simplex, with dimension $\dim(\sigma)=k$. If~$\tau \subseteq \sigma$ we call~$\tau$ a \textit{face} of~$\sigma$. The down-set condition amounts to the requirement that a simplicial complex contains every face of every simplex in it.

The vertices of a simplex are its elements. The vertices of a simplicial complex $S$ are the vertices of all the simplices of~$S$. We may write the set of vertices as $\vtc(S) = \bigcup S = \bigcup_{\sigma \in S} \sigma$. This set stands in bijective correspondence with the set of singleton elements, the 0-simplices of~$S$.
It is not uncommon to require $\vtc(S) = I$, but we do not impose this condition. In persistent topology we typically consider filtered families of simplicial complexes where earlier members of a family may have fewer vertices than later ones; we may wish to use the same underlying set~$I$ for all complexes in the family.

\begin{example}
    \label{ex:standard-n-simplex}
    The standard $n$-simplex is $\Delta^n = \Delta\{0, 1, \dots, n\}$. We can regard the family $(\Delta^n)$ as simplicial complexes on the natural numbers $\N$, with $\vtc(\Delta^n) = \{0, 1, \dots, n\}$.
\end{example}

\subsubsection*{Geometric realization.}
To a simplicial complex $S$ we associate a CW-complex $|S|$ constructed in the following way.
Define vector spaces\footnote{%
The index being raised or lowered respectively indicates a contravariant or covariant dependence on~$I$.
}
\begin{align*}
\R^{I}
&=
\{
\text{indexed families $(x_i) = (x_i)_{i \in I}$ of real numbers}
\},
\\
\R_I
&=
\{
\text{$(x_i) \in \R^I$ such that at most finitely many of the $x_i$ are non-zero}
\}.
\end{align*}
The second of these is the `free vector space' on~$I$: the standard basis vectors associated to the elements of~$I$ constitute a basis for $\R_I$.
If $I$ is finite the two vector spaces are equal, and we give $\R_I = \R^I$ the Euclidean topology. In general we assign the direct-limit topology to $\R_I$ so that a set is closed iff its intersection with $\R_F$ is closed
for each finite subset $F \subseteq I$.
We note that this is a compactly generated topology, and that
all linear maps are continuous.\footnote{%
Since every finite-dimensional subspace of $\R_I$ is contained in some $\R_F$, a set is closed iff its intersection with each finite-dimensional subspace is closed. It readily follows that all linear maps are continuous.}

Let $\basis : I \to \R_I$ denote the map that carries $i \in I$ to the corresponding standard basis vector in~$\R_I$.
Each simplex $\sigma \in S$ has a geometric realization
\[
|\sigma|
=
\conv(\basis(\sigma))
=
\conv \{ \basis(i) \mid i \in \sigma \}
\]
that is the convex hull of the standard basis vectors associated to its vertices. The geometric realization of $S$ is the union of the geometric realizations of its simplices
\[
|S| = \bigcup_{\sigma \in S} |\sigma|
\]
topologized as a subpace of~$\R_I$ with the direct-limit topology.

Comparing this construction with the definitions in \cite{Whitehead_1949,Hatcher_2002} we find that $|S|$ is a regular CW-complex indexed by~$S$, with closed cells $\overline{e}_\sigma = |\sigma|$ of dimension $\dim(\overline{e}_\sigma) = \dim(\sigma)$, and with the CW-complex partial order agreeing with the original partial order on~$S$.

The following concept is useful for certain calculations.
    Let $x = (x_i)$ be a point in $|S|$. Since $x$ lies in the convex hull of standard basis vectors, we have $0 \leq x_i \leq 1$ for all indices and $\sum x_i = 1$.
    The finite set
    \[
    \sigma = \{ i \in I \mid x_i > 0 \}
    \]
    is the unique simplex of~$S$ such that $x$ lies in the open cell $e_{\sigma}$. Thus $e_\sigma$ is the carrier of~$x$ in the sense that we defined for CW-complexes. We find it convenient to refer to $\sigma$ itself as the carrier of~$x$.

\subsubsection*{Simplicial maps.}
A map between simplicial complexes~$S, T$ on sets $I,J$ is a function $f : I \rightarrow J$ that sends simplices to simplices: if~$\sigma \in S$, then~$f(\sigma)\in T$.  
When this condition is satisfied, we write $f : S \to T$ in a slight overloading of notation.

A simplicial map induces a map between geometric realizations.
Indeed, any function $f : I \to J$ defines a linear map $\R_{f} : \R_I \to \R_J$ by
\[
\basis(i) \longmapsto \basis(f(i))
\]
in terms of standard basis vectors, or equivalently
\[
(x_i) \longmapsto (y_j)
\quad
\text{where}
\quad
y_j = \sum (x_i \mid i \in f^{-1}(j))
\]
in coordinates.
Observe that if $\sigma$ is the carrier of $(x_i)$ then $f(\sigma)$ is the carrier of $(y_j)$. It follows that $\R_{f}$ restricts to a map $|f| : |S| \to |T|$ iff $f$ is a simplicial map $S \to T$.

Let $S,T,U$ be simplicial complexes on $I,J,K$, and let $f : S \to T$ and $g : T \to U$. By considering standard basis vectors, we see that
$\R_{g} \circ \R_{f} = \R_{gf}$ and therefore $|g| \circ |f| = |gf|$.
Likewise $\R_{\id_I} = \id_{\R_I}$ and therefore $|\id_S| = \id_{|S|}$.
We conclude that geometric realization is functorial from the category of simplicial complexes and simplicial maps to the category of topological spaces and continuous maps.

\subsection{Prod-complexes}
\label{subsec:prod}

We now introduce prod-complexes\footnote{%
We distinguish the class of prod-complexes from the much larger class of prod-simplicial complexes~\cite{Kozlov_2008}. The key difference is that we maintain a product structure on the vertex set that aligns with the product structures on the cells of the geometric realization.
}
and their geometric realizations.
Whereas a simplicial complex is defined on a set~$I$ of potential vertices, a prod-complex is defined on a list of sets $I_1, \dots, I_m$ and has vertices in the product set $I_1 \times \dots \times I_m$.
When there are $m$ sets we say that the prod-complex `has order~$m$'. 
Simplicial complexes constitute the case $m=1$.

A \emph{prod-complex} on sets $I_1, \dots, I_m$ is a downwardly closed subset $P$ of the product poset
$\Delta I_1 \times \dots \times \Delta I_m$.
A typical element 
$
\sigma = (\sigma_1, \dots, \sigma_m)
$
is called a \emph{prod-simplex} and has dimension
\[
\dim(\sigma)
=
\dim(\sigma_1) + \dots + \dim(\sigma_m).
\]
Its vertex set
$
\sigma_1 \times \dots \times \sigma_m
\subseteq
I_1 \times \dots \times I_m
$
can be identified with (but is not the same as) the set of 0-dimensional faces of~$\sigma$.

The \emph{geometric realization} of a prod-complex on $I_1, \dots, I_m$ is a subcomplex of the product CW-complex $|\Delta I_1| \times \dots \times |\Delta I_m|$.
We associate to each prod-simplex $\sigma = (\sigma_1, \dots, \sigma_m)$ the closed cell
$
|\sigma| = |\sigma_1| \times \dots \times |\sigma_m|
$
and define:
\[
|P| = \bigcup_{\sigma \in P} |\sigma|
\]
It is a regular CW-complex, being a subcomplex of a finite product of regular CW-complexes.

\subsection{Prod-maps}
\label{subsec:prod-maps}

There are various kinds of maps between prod-complexes. We limit our discussion to two types, described in this subsection and the next.

Let $P$ be a prod-complex on $I_1, \dots, I_m$ and $Q$ be a prod-complex on $J_1, \dots, J_m$ (both of order~$m$). We define a \emph{prod-map} $P \to Q$ to be a list of functions
\[
f = (f_1, \dots, f_m)
\qquad
\text{where $f_k : I_k \to J_k$ for all~$k$}
\]
that carries prod-simplices to prod-simplices in the following sense:
\[
(\sigma_1, \dots, \sigma_m) \in P
\quad
\Rightarrow
\quad
(f_1(\sigma_1), \dots, f_m(\sigma_m)) \in Q
\]
It follows from this requirement that the continuous map
\[
|f_1| \times \dots \times |f_m|
\;:\;
|\Delta I_1| \times \dots \times |\Delta I_m|
\;\longrightarrow\;
|\Delta J_1| \times \dots \times |\Delta J_m|
\]
restricts to a map $|f| : |P| \to |Q|$.

Prod-maps are composed on each factor separately. 
Let $P,Q,R$ be prod-complexes of order~$m$, on lists of sets $I_k, J_k, K_k$ respectively.
If
\begin{alignat*}{2}
f &= (f_1, \dots, f_m)
&&\,:\,
\xymatrix{
I_1 \times \dots \times I_m
    \ar[r]
&
J_1 \times \dots \times J_m
}
\\
g &= (g_1, \dots, g_m)
&&\,:\,
\xymatrix{
J_1 \times \dots \times J_m
    \ar[r]
&
K_1 \times \dots \times K_m
}
\end{alignat*}
define prod-maps $f : P \to Q$ and $g : Q \to R$ then
\[
gf = (g_1 f_1, \dots, g_m f_m)
\,:\,
\xymatrix{
I_1 \times \dots \times I_m
    \ar[r]
&
K_1 \times \dots \times K_m
}
\]
defines a prod-map $gf: P \to R$.

Geometrically, it is immediately verified that $|gf| = |g| \circ |f|$ and $|\id_P| = \id_{|P|}$. Indeed we may assume that $P,Q,R$ are the complete prod-complexes on their respective vertex sets, then apply the simplicial complex case to each factor separately.
We conclude that geometric realization is functorial from the category of prod-complexes and prod-maps  (of each fixed order~$m$) to the category of topological spaces and continuous maps.

\subsection{Dowkerian quotients}
\label{subsec:dowkerian-quotients}
We name these maps and quotient spaces after Dowker because of their role in his theorem and its multiway generalization. Dowker complexes, it turns out, are special instances of Dowkerian quotient spaces. Dowker's theorem amounts to the fact that the quotient map is a homotopy equivalence in certain cases.

Let $P$ be a prod-complex on $I_1, \dots, I_m$. The $k$-th Dowkerian quotient of~$P$ is the prod-complex $P/I_k$ (alternatively written $P_{/k}$) on $I_1, \dots, \widehat{I}_k, \dots, I_m$ defined as follows:    
\begin{equation}
\label{eq:dowkerian-quotient}
P/I_k
= P_{/k}
=
    \{
    (\sigma_1, \dots, \widehat{\sigma}_k, \dots, \sigma_m)
    \mid
    (\sigma_1, \dots, \sigma_k, \dots, \sigma_m) \in P
    \}
\end{equation}
The hats \, $\widehat{~}$ \, indicate omission; we delete $I_k$ from the list of sets and delete the $k$-th factor of each prod-simplex of~$P$.
There is a map of posets $\psi_k = \psi_{P,k} : P \to P/I_k$ defined by:
\[
\sigma =
(\sigma_1, \dots, \sigma_k, \dots, \sigma_m)
\longmapsto
(\sigma_1, \dots, \widehat{\sigma}_k, \dots, \sigma_m)
=:
\sigma/I_k
\]
We call this the Dowkerian quotient map. Its geometric realization $|\psi_{P,k}| : |P| \to |P/I_k|$ is the restriction of the projection map
\[
|\Delta I_1| \times \dots \times |\Delta I_m|
\;
\longrightarrow
\;
|\Delta I_1| \times \dots \times
|\widehat{\Delta I_k}|
\times \dots \times |\Delta I_m|
\]
that eliminates the $k$-th factor. This carries each closed cell $|\sigma|$ onto the closed cell $|\sigma/I_k|$.

Dowkerian quotients are natural in the following sense.
Let $f = (f_1, \dots, f_m)$ be a prod-map
and write $f_{/{k}} = (f_1, \dots, \widehat{f}_k, \dots f_m)$.
Then we have commutative diagrams
\begin{equation}
\label{eq:dowkerian-naturality}
\xymatrix{
P
    \ar[rr]^{\psi_{P,k}}
    \ar[d]_{f}
&&
P/I_k
    \ar[d]^{f_{/k}}
    \\
Q
    \ar[rr]_{\psi_{Q,k}}
&&
Q/J_k
}
\qquad
\raisebox{-4ex}{\text{and}}
\qquad
\xymatrix{
|P|
    \ar[rr]^{|\psi_{P,k}|}
    \ar[d]_{|f|}
&&
|P/I_k|
    \ar[d]^{|f_{/k}|}
    \\
|Q|
    \ar[rr]_{|\psi_{Q,k}|}
&&
|Q/J_k|
}\end{equation}
of posets and spaces respectively.
The verification is straightforward. For the poset diagram, an element $(\sigma_1, \dots, \sigma_m) \in P$ is mapped to $(f_1(\sigma_1), \dots, \widehat{f_k(\sigma_k)}, \dots, f_m(\sigma_m)) \in Q/J_k$ by either path. The diagram of spaces is a restriction of this commutative diagram of vector spaces:
\[
\xymatrix@R=3pc{
\R_{I_1} \times \dots \times \R_{I_m}
    \ar[r]
    \ar[d]_{\R_{f_1} \times \dots \times \R_{f_m}}
&
\R_{I_1} \times \dots
\times \widehat{\R_{I_k}} \times
\dots \times \R_{I_m}
    \ar[d]^{\R_{f_1} \times \dots \times \widehat{\R_{f_k}} \times \dots \times \R_{f_m}}
\\
\R_{J_1} \times \dots \times \R_{J_m}
    \ar[r]
&
\R_{J_1} \times \dots
\times \widehat{\R_{J_k}} \times
\dots \times \R_{J_m}
}
\]

The following remark will be useful later.

\begin{remark}
    \label{rem:dowkerian-*}
    Dowkerian quotient maps may be interpreted as prod-maps in the following sense. We can identify a prod-complex $P$ over $I_1, \dots, \widehat{I_k}, \dots, I_m$ with a prod-complex $P^*$ over $I_1, \dots, *_k, \dots, I_m$ where $*_k$ denotes a singleton set in the $k$-th position. Indeed, there is a poset isomorphism
    \[
    \Delta I_i \times \dots \times
    \widehat{\Delta {I_k}}
    \times \dots \times \Delta I_m
    \; \cong \;
    \Delta I_i \times \dots \times
    \Delta (*_k)
    \times \dots \times \Delta I_m
    \]
    because $\Delta(*_k)$ is a singleton poset. Moreover $|*_k|$ is a singleton space, so there is a canonical natural isomorphism $|P| \cong |P^*|$.
    Under this correspondence, the Dowkerian quotient map $\psi_{P,k} : P \to P/I_k$ becomes a prod-map $(\id_{I_1}, \dots, *_k, \dots \id_{I_m}) : P \to (P/I_k)^*$ where $*_k$, in a standard abuse of notation, denotes the unique map $I_k \to *_k$.
\end{remark}

\section{The multiway Dowker theorem}
\label{sec:multiway}

In this section we construct the main result.
In section~\ref{subsec:revisited}, we revisit Dowker's theorem in the light of section~\ref{sec:prod}.
This motivates a `cellular Dowker lemma' for arbitrary cell complexes, in section~\ref{subsec:cellular}. In section~\ref{subsec:multiway-prod}, we state and prove the multiway Dowker theorem for prod-complexes. This is primarily an assertion that certain Dowkerian quotient maps are homotopy equivalences.
In section~\ref{subsec:iterated}, we briefly discuss iterated Dowkerian quotients.

\subsection{Dowker's theorem revisited}
\label{subsec:revisited}

Let $R \subseteq I \times J$ be a binary relation.
The definitions
\begin{align*}
    \Dowker_I(R) &=
    \{ \sigma \in \Delta I
    \mid
    \text{there exists $j \in J$ such that
    $\sigma \times \{j\} \subseteq R$}
    \}
    \\
    \Dowker_J(R) &=
    \{ \tau \in \Delta J
    \mid
    \text{there exists $i \in I$ such that
    $\{i\} \times \tau \subseteq R$}
    \}
    \end{align*}
of the two classic Dowker complexes suggest two additional relations:
\begin{alignat*}{2}
    R_I
    &=
    \{ (\sigma, j) \mid
    \sigma \times \{j\} \subseteq R
    \}
    &&\; \subseteq \; \Delta I \times J
    \\
    R_J
    &=
    \{ (i, \tau) \mid
    \{i\} \times \tau \subseteq R
    \}
    &&
    \; \subseteq \; I \times \Delta J
\end{alignat*}
These are downwardly closed, in the following sense:
\begin{alignat*}{3}
(\sigma, j) \in R_I
&\;\; \text{and} \;\;
\sigma' \subseteq \sigma
&\;\; \text{implies} \;\;
(\sigma', j) \in R_I
\\
(i, \tau) \in R_J
&\;\; \text{and} \;\;
\tau' \subseteq \tau
&\;\; \text{implies} \;\;
(i, \tau') \in R_J
\end{alignat*}
This means that the `slices' of these relations
\begin{align*}
R_{I,j}
&= \{ \sigma \in \Delta I \mid (\sigma, j) \in R_I \}
\\
R_{i,J}
&= \{ \tau \in \Delta J \mid (i, \tau) \in R_J \}
\end{align*}
are simplicial complexes for each $j$ and each $i$. Moreover
\[
\Dowker_I(R) = \bigcup_{j \in J} R_{I,j}
\quad \text{and} \quad
\Dowker_J(R) = \bigcup_{i \in I} R_{i,J}
\]
so we recover the two Dowker complexes from these relation slices.

We can repeat the process that gave us $R_I$ and $R_J$ to obtain one more downwardly closed relation. This one we interpret geometrically:

\begin{definition}
    \label{def:relational-product}
    The \emph{Dowker relational product} \cite{Yoon_2024} is the prod-complex on $I,J$ defined:
    \[
    \dowker(R)
    =
    R_{IJ}
    =
    \{
    (\sigma, \tau) \mid \sigma \times \tau \subseteq R
    \}
    \; \subseteq \;
    \Delta I \times \Delta J
    \]
It is the largest prod-complex on $I, J$ whose vertex set is equal to~$R$.
\end{definition}

\begin{proposition}
    \label{prop:DI=D/I}
    The Dowker complexes of~$R$ are equal to the Dowkerian quotients of the Dowker relational product:
    \[
    \Dowker_I(R) = \dowker(R)/J
    \quad \text{and} \quad
    \Dowker_J(R) = \dowker(R)/I   
    \]
\end{proposition}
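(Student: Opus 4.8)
The plan is to prove the statement by directly unwinding the definitions. Both equalities are equalities of simplicial complexes (order-one prod-complexes) on the same underlying set, so it suffices to compare their simplices. I will prove the first equality $\Dowker_I(R) = \dowker(R)/J$; the second follows by interchanging the roles of $I$ and $J$.

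First I would spell out what membership in each side means. By \eqref{eq:dowkerian-quotient} (with $m = 2$ and $k = 2$), a simplex $\sigma \in \Delta I$ lies in $\dowker(R)/J$ precisely when there exists $\tau \in \Delta J$ with $(\sigma, \tau) \in \dowker(R)$, that is, by Definition~\ref{def:relational-product}, when there exists a non-empty finite $\tau \subseteq J$ such that $\sigma \times \tau \subseteq R$. On the other side, $\sigma \in \Dowker_I(R)$ means there exists a single element $j \in J$ with $\sigma \times \{j\} \subseteq R$.

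The remaining step is to reconcile these two existential conditions. If $\sigma \times \{j\} \subseteq R$ then $\{j\} \in \Delta J$ is an admissible choice of $\tau$, so $\sigma \in \dowker(R)/J$. Conversely, if $\sigma \times \tau \subseteq R$ for some non-empty $\tau \in \Delta J$, choose any $j \in \tau$; then $\sigma \times \{j\} \subseteq \sigma \times \tau \subseteq R$, so $\sigma \in \Dowker_I(R)$. This establishes the set equality, and hence (since both sides are down-sets in $\Delta I$) the equality of simplicial complexes.

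There is essentially no obstacle here beyond bookkeeping; the only point worth flagging is the harmless mismatch between quantifying over an element $j \in J$ and quantifying over a simplex $\tau \in \Delta J$, which collapses because every non-empty simplex of $\Delta J$ contains a vertex and every vertex is itself a simplex of $\Delta J$. One could equivalently phrase the argument via the slices already introduced in the text: an element $(\sigma, \tau) \in \dowker(R)$ lying over $\sigma$ exists iff $\sigma$ belongs to some slice $R_{I,j}$, iff $\sigma \in \bigcup_{j \in J} R_{I,j} = \Dowker_I(R)$.
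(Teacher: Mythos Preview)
Your proof is correct and follows essentially the same approach as the paper: both directions are established by passing between a single witness $j \in J$ and the singleton simplex $\{j\} \in \Delta J$, and conversely by choosing any vertex $j \in \tau$ of a witnessing simplex. The paper's proof is just a terser version of exactly this argument.
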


\begin{proof}
If $\sigma \in \Dowker_I(R)$ then there exists $j \in J$ with $\sigma \times \{j\} \subseteq R$.
Then $(\sigma,\{j\}) \in \dowker(R)$ so $\sigma \in \dowker(R)/J$.
Conversely if $\sigma \in \dowker(R)/J$ then some $\sigma \times \tau \subseteq R$. Taking $j \in \tau$ we have $\sigma \times \{j\} \subseteq R$ and therefore $\sigma \in \Dowker_I(R)$. The second statement is similar.
\end{proof}

\begin{theorem}[Dowker's theorem revisited]
\label{thm:dowker-revisited}
The geometric Dowkerian quotient maps
\[
\xymatrix{
|\Dowker_I(R)| = |\dowker(R)/J|
&&
|\dowker(R)|
    \ar@{>>}[ll]_-{|\psi_2|}
    \ar@{>>}[rr]^-{|\psi_1|}
&&
|\dowker(R)/I| = |\Dowker_J(R)|
}
\]
are homotopy equivalences.
\end{theorem}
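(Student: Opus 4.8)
The strategy is to apply Smale's mapping theorem (Theorem~\ref{thm:smale-v2}) to each of the two geometric Dowkerian quotient maps $|\psi_1|, |\psi_2|$, reducing the problem to a computation of the fibers. By symmetry it suffices to treat $|\psi_1| : |\dowker(R)| \to |\dowker(R)/I|$, the map that collapses the $I$-factor. Since $\dowker(R)/I = \Dowker_J(R)$ and $\dowker(R)$ are geometric realizations of prod-complexes, hence regular CW-complexes, and since these are in general infinite, I would first use Proposition~\ref{prop:finite-to-infinite} to reduce to the case where $I$ and $J$ are finite. Concretely, given a finite subcomplex of $|\Dowker_J(R)|$, its vertices involve finitely many elements of $J$; taking the union of these with finitely many elements of $I$ needed to witness the defining condition for each simplex produces finite subsets $I' \subseteq I$, $J' \subseteq J$, and restricting $R$ to $I' \times J'$ gives compatible finite subcomplexes of $|\dowker(R)|$ and $|\dowker(R)/I|$ on which $|\psi_1|$ restricts to the analogous quotient map. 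One checks the two hypotheses of Proposition~\ref{prop:finite-to-infinite} are met (any finite subcomplex downstairs or upstairs is captured by some such $(I',J')$), so it is enough to prove the theorem when everything is finite; then $|\dowker(R)|$ and $|\Dowker_J(R)|$ are finite CW-complexes and Smale's theorem will deliver a genuine homotopy equivalence, not merely a weak one.

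So assume $I,J$ finite. The spaces $|\dowker(R)|$ and $|\dowker(R)/I| = |\Dowker_J(R)|$ are then finite CW-complexes, hence compact, metrizable, and locally contractible (Remark~\ref{rem:CW-properties}(1)). The map $|\psi_1|$ is continuous, being the restriction of a projection $|\Delta I| \times |\Delta J| \to |\Delta J|$, and it is surjective by Proposition~\ref{prop:DI=D/I}: every simplex $\tau$ of $\Dowker_J(R)$ has some $i\in I$ with $\{i\}\times\tau \subseteq R$, so $(\{i\},\tau) \in \dowker(R)$ lies over it. It remains to identify the fiber $|\psi_1|^{-1}(y)$ for an arbitrary point $y \in |\Dowker_J(R)|$ and show it is contractible and locally contractible. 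Let $\tau = \{j \mid y_j > 0\}$ be the carrier of $y$ in $\Dowker_J(R)$. A point $(x, y') \in |\Delta I| \times |\Delta J|$ lies in the fiber iff $y' = y$ and $(x,y) \in |\dowker(R)|$; writing $\sigma = \{i \mid x_i > 0\}$ for the carrier of $x$, membership in $|\dowker(R)|$ means $(\sigma,\tau') \in \dowker(R)$ for the carrier $\tau'$ of $y$, i.e. $\sigma \times \tau \subseteq R$. Hence the fiber over $y$ is canonically homeomorphic to $|\Dowker_I(R)_\tau|$, the geometric realization of the simplicial complex $\Dowker_I(R)_\tau := \{\sigma \in \Delta I \mid \sigma \times \tau \subseteq R\}$ — a subcomplex of $\Delta I$ depending only on $\tau$, not on the particular $y$.

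The key point is then that $|\Dowker_I(R)_\tau|$ is contractible. This is where the `Dowker' content lives: for each $j \in \tau$, the complex $\Dowker_I(R)_\tau$ is a subcomplex of the simplicial complex $R_{I,j} = \{\sigma \mid \sigma\times\{j\}\subseteq R\}$, and more importantly $\Dowker_I(R)_\tau = \bigcap_{j\in\tau} R_{I,j}$. Each $R_{I,j}$ is a \emph{full} subcomplex of $\Delta I$ in the sense that it equals $\Delta(\operatorname{vert} R_{I,j})$ — because the condition $\sigma\times\{j\}\subseteq R$ just says every vertex of $\sigma$ is $R$-related to $j$, a vertexwise condition — and hence is a simplex, which is contractible; a finite intersection of such `sub-simplices of $\Delta I$' is again of the form $\Delta(A)$ for $A = \bigcap_{j\in\tau}\{i \mid (i,j)\in R\}$, which is non-empty precisely because $\tau \in \Dowker_J(R)$ guarantees some $i$ with $\{i\}\times\tau\subseteq R$, i.e. $A \neq \emptyset$. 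Thus the fiber is a (non-empty) simplex, which is contractible and locally contractible. Feeding this into Theorem~\ref{thm:smale-v2} shows $|\psi_1|$ is a homotopy equivalence (in the finite case), and the finite-to-infinite reduction above completes the proof; the argument for $|\psi_2|$ is identical with the roles of $I,J$ swapped. I expect the main obstacle to be nothing deep but rather bookkeeping: verifying carefully that the fiber over $y$ really is the closed cell-structure $|\Delta(A)|$ with $A$ as above (in particular that the direct-limit/CW topology on the fiber agrees with the subspace topology, which is automatic here since everything is finite after the reduction), and checking the two covering conditions in Proposition~\ref{prop:finite-to-infinite} hold for the chosen exhausting families $(X_{I',J'}, Y_{I',J'})$.
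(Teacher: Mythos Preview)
Your proposal is correct and takes essentially the same approach as the paper: reduce to the finite case via Proposition~\ref{prop:finite-to-infinite}, compute that each fiber of $|\psi_k|$ is a nonempty geometric simplex (the simplex on $A = \bigcap_{j\in\tau}\{i \mid (i,j)\in R\}$), and invoke Smale's theorem~\ref{thm:smale-v2}. The only difference is packaging: the paper abstracts this argument into the cellular Dowker lemma~\ref{lem:cellular} (a CW-complex $X$ in place of $|\Delta I|$, with the same fiber-is-a-simplex computation) and then specializes, whereas you carry out the computation directly for $|\dowker(R)|$.
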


This statement can be thought of as a prod-simplicial `squashing' of Brun and Salbu's formulation~\cite{Brun_Salbu_2022}.
We deduce it in the next section from the cellular Dowker lemma \ref{lem:cellular}.

\subsection{The cellular Dowker lemma}
\label{subsec:cellular}

We mimic some of the steps that led to the statement of Theorem~\ref{thm:dowker-revisited} in a more general setting.

Let $X$ be a CW-complex indexed by a set $A$, let $J$ be a set, and let $R \subseteq A \times J$ be a relation that is downwardly closed with respect to the CW-complex partial order on~$A$ and the trivial partial order on~$J$. Thus
\[
(\alpha,j) \in R
\;\; \text{and} \;\;
\alpha' \leq \alpha
\;\; \text{implies} \;\;
(\alpha',j) \in R
\]
for all $\alpha, \alpha' \in A$ and $j \in J$.

We have two types of slice:
\begin{align*}
    R_\alpha &= \{ j \in J \mid (\alpha, j) \in R \}
    \\
    R_j &= \{ \alpha \in A \mid (\alpha, j) \in R \}
\end{align*}
Since $R$ is downwardly closed, the function $\alpha \mapsto R_\alpha$ is order-reversing and each slice $R_j$ is a down-set that defines a subcomplex $X_{R_j}$ of~$X$.
We write
\[
X_R
=
\bigcup_{j \in J} X_{R_j}
=
\bigcup
\,
(
\overline{e}_\alpha 
\mid
\text{there exists $j \in J$ such that $(\alpha,j) \in R$}
)
\]
for the union of these subcomplexes. This is the subcomplex of~$X$ actively involved in~$R$.

The next step is to define a relation between $A$  and $\Delta J$:
\[
R_J
    =
    \{ (\alpha,\tau) \mid
    \{ \alpha \} \times \tau \subseteq R
    \}
    \leq A \times \Delta J
\]
Being a down-set, it defines a subcomplex of $X \times |\Delta J|$
\[
X \times_R |\Delta J|
=
\bigcup
\,(
\overline{e}_\alpha \times |\tau| 
\mid
(\alpha,\tau) \in R_J
)
\]
which we may call the `cellular Dowker relational product' of~$R$.
See Figure~\ref{fig:saucepan}.

\begin{proposition}
\label{prop:cellular-map}
The projection map $X \times |\Delta J| \to X$ restricts to a surjective map:
\[
\xymatrix{
X \times_R |\Delta J|
    \ar[r]
&
X_R
}
\]
\end{proposition}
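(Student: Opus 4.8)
The plan is to verify the claim at the level of underlying sets, since the projection $\pi \colon X \times |\Delta J| \to X$, $(x,y) \mapsto x$, is continuous, and the restriction–corestriction of a continuous map to subspaces is again continuous once we know the image lands in the target subspace. So I would check two things: that $\pi$ carries $X \times_R |\Delta J|$ into $X_R$, and that every point of $X_R$ is hit.

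For the first, I would take $(x,y) \in X \times_R |\Delta J|$. By the definition of $X \times_R |\Delta J|$ there is a pair $(\alpha,\tau) \in R_J$ with $x \in \overline{e}_\alpha$ and $y \in |\tau|$. Since $\tau$ is a simplex of $\Delta J$ it is non-empty, so one can choose $j \in \tau$. Then $(\alpha,j) \in \{\alpha\} \times \tau \subseteq R$, so $\alpha$ is one of the indices appearing in the union defining $X_R$; hence $\overline{e}_\alpha \subseteq X_R$ and in particular $\pi(x,y) = x \in X_R$.

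For surjectivity, I would take $x \in X_R$. By the definition of $X_R$ as a union of cell closures, $x \in \overline{e}_\alpha$ for some $\alpha$ admitting a witness $j \in J$ with $(\alpha,j) \in R$. Setting $\tau = \{j\} \in \Delta J$, we get $\{\alpha\} \times \tau = \{(\alpha,j)\} \subseteq R$, so $(\alpha,\tau) \in R_J$ and therefore $\overline{e}_\alpha \times |\tau| \subseteq X \times_R |\Delta J|$. The realization $|\tau|$ of the singleton simplex is the single point $\basis(j)$, so $(x,\basis(j))$ lies in $X \times_R |\Delta J|$ and projects to $x$.

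I do not expect a genuine obstacle here; the argument is bookkeeping. The only points needing a moment's care are that simplices are non-empty (so a pair $(\alpha,\tau) \in R_J$ always supplies a witness $j \in \tau$ with $(\alpha,j)\in R$) and that the geometric realization of a singleton simplex is a single point (so a witness $j$ supplies an honest preimage $(x,\basis(j))$). Note that downward closure of $R$ in the first coordinate is not used for this proposition as such; it is what earlier made $R_J$ a down-set and hence $X \times_R |\Delta J|$ a genuine subcomplex.
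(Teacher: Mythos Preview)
Your proof is correct and follows essentially the same approach as the paper: both directions hinge on the observation that $(\alpha,\tau)\in R_J$ for some simplex $\tau$ is equivalent to $(\alpha,j)\in R$ for some $j\in J$, which characterizes the cells of $X_R$. The paper phrases this at the level of cell closures (identifying the image as a union of $\overline{e}_\alpha$), while you work pointwise, but the content is the same.
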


\begin{proof}
    The image of the map is the union of the closed cells $\overline{e}_\alpha$ for which there exists a simplex~$\tau$ with $(\alpha,\tau) \in R_J$. This condition is equivalent to the existence of a single element~$j$ with $(\alpha,j) \in R$; this is satisfied precisely by the cells of $X_R$.
\end{proof}

\begin{lemma}[cellular Dowker lemma]
    \label{lem:cellular}
    Let $X$ be a CW-complex index by a set~$A$, let $J$ be a set, and let $R \subseteq A \times J$ be a downwardly closed relation, as above. Then the map
    \[
    \xymatrix{
    X \times_R |\Delta J|
        \ar[r]
    &
    X_R
    }
    \]
    of Proposition~\ref{prop:cellular-map} is a homotopy equivalence.
\end{lemma}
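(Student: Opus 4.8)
The plan is to apply Smale's mapping theorem (Theorem~\ref{thm:smale-v2}) to the projection map $\pi : X \times_R |\Delta J| \to X_R$, after first reducing to the case of finite complexes so that the hypotheses of that theorem are available. Both spaces are CW-complexes, hence locally contractible; the issue is finiteness, metrizability, and contractibility of fibers. I would first observe that $\pi$ is the map induced by the projection and is surjective by Proposition~\ref{prop:cellular-map}. To identify the fibers: for a point $x \in X_R$ with carrier $e_\alpha$, the preimage $\pi^{-1}(x)$ sits inside $\{x\} \times |\Delta J|$, and a point $(x, y)$ with $y$ having carrier $\tau$ lies in $X \times_R |\Delta J|$ iff $(\alpha, \tau) \in R_J$, i.e.\ iff $\tau \subseteq R_\alpha$. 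Hence $\pi^{-1}(x) \cong |S_\alpha|$ where $S_\alpha = \{\tau \in \Delta J \mid \tau \subseteq R_\alpha\}$ is the \emph{complete simplex} on the set $R_\alpha$ (nonempty because $x \in X_R$ forces $R_\alpha \ne \emptyset$). A complete simplex has contractible geometric realization (it is a convex set, or a cone on any vertex), and it is a CW-complex, hence locally contractible. So every fiber is contractible and locally contractible, as required.

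Next I would handle finiteness via Proposition~\ref{prop:finite-to-infinite}. The natural families are indexed by pairs $(F, T)$ where $F$ is a finite subcomplex of $X$ and $T$ is a finite subcomplex of $|\Delta J|$ (equivalently a finite subset $J_0 \subseteq J$, taking $T = |\Delta J_0|$); enlarging $J_0$ we may assume $F \subseteq X_{R}$ by passing to $X_F := F \cap X_R$, which is a finite subcomplex of $X_R$. Set $Y_{F,J_0} = X_F$ and $Z_{F,J_0} = X_F \times_{R} |\Delta J_0|$, the subcomplex of $X_F \times |\Delta J_0|$ spanned by cells $\overline{e}_\alpha \times |\tau|$ with $\alpha$ a cell of $X_F$ and $\tau \subseteq R_\alpha \cap J_0$. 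One checks that $\pi$ restricts to the analogous projection $Z_{F,J_0} \to Y_{F,J_0}$, that these families are cofinal in the sense required by Proposition~\ref{prop:finite-to-infinite} (any finite subcomplex of $X \times_R |\Delta J|$ involves finitely many cells, hence finitely many $\alpha$ and finitely many elements of $J$, and likewise for $X_R$), and that it therefore suffices to prove each restricted projection is a homotopy equivalence.

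So the crux is: for $X$ a finite CW-complex, $\pi : X \times_R |\Delta J_0| \to X_R$ is a homotopy equivalence. Now both spaces are finite CW-complexes, hence compact and metrizable (Remark~\ref{rem:CW-properties}(2)) and locally contractible; the fibers are finite-dimensional complete simplices, hence contractible and locally contractible by the computation above. Smale's theorem version 2 (Theorem~\ref{thm:smale-v2}) applies directly and yields that $\pi$ is a homotopy equivalence. Combining with the finite-to-infinite reduction completes the proof.

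I expect the main obstacle to be the fiber computation done carefully enough to be airtight: one must verify that $\pi^{-1}(x)$ really is \emph{all} of $\{x\}\times|S_{R_\alpha}|$ and not something smaller or with a subtler topology — in particular that the subspace topology on the fiber from $X \times_R |\Delta J|$ (using the compactly generated product, per the Convention) agrees with the CW/simplicial topology on $|S_{R_\alpha}|$. This is where the remarks in Section~\ref{subsec:CW} about the product topology versus the quotient topology need to be invoked, though since one factor (the simplex factor, or after restriction the finite complex $X_F$) can be taken locally compact, the two topologies agree and the subtlety evaporates. A secondary bookkeeping point is checking that $X_F \times_R |\Delta J_0|$ is genuinely the preimage $\pi^{-1}(X_F)$ inside $X \times_R |\Delta J|$ intersected with $X \times |\Delta J_0|$, so that the restricted maps assemble correctly; this is a routine but necessary verification of the hypotheses of Proposition~\ref{prop:finite-to-infinite}.
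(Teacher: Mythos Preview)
Your approach is essentially identical to the paper's: compute the fiber over a point $x$ with carrier $e_\alpha$ as the geometric simplex on $R_\alpha$, apply Smale's theorem (Theorem~\ref{thm:smale-v2}) in the finite case, and invoke Proposition~\ref{prop:finite-to-infinite} for the general case. The fiber computation and the worry about compactly generated topology are handled just as you describe.

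One point to tighten in the finite-to-infinite step: as written, your restricted map $Z_{F,J_0} \to Y_{F,J_0} = X_F$ need not be surjective. A cell $e_\alpha$ of $X_F = F \cap X_R$ has $R_\alpha \ne \emptyset$, but nothing in your setup forces $R_\alpha \cap J_0 \ne \emptyset$; when that intersection is empty the fiber over points of $e_\alpha$ is empty and Smale's theorem does not apply. The paper sidesteps this by taking the target of the restricted map to be $Y_S$ with $S = R \cap (Y \times K)$ --- i.e.\ the genuine image of the restricted projection --- and, to guarantee that this target still contains the prescribed finite subcomplex $G \subseteq X_R$, by explicitly adjoining to $K$ one witness $j$ with $(\sigma,j)\in R$ for each cell $\sigma$ of $G$. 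Your clause ``enlarging $J_0$ we may assume $F \subseteq X_R$'' may be gesturing at this but does not accomplish it; the fix is either to enlarge $J_0$ by one witness per cell of $X_F$, or equivalently to replace your codomain $X_F$ by $(X_F)_{R\cap(F\times J_0)}$.
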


\begin{figure}
\centering
\includegraphics[scale=1]{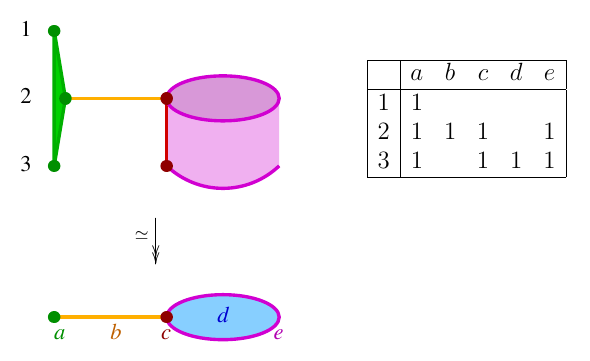}
    \caption{The five cells $a,b,c,d,e$ of the racquet-like space label the columns of a binary relation  whose rows are labeled $1,2,3$.
    The relation is downwardly closed with respect to the cell-complex partial order $a \leq b \geq c \leq e \leq d$ so it defines a cellular Dowker product: a saucepan-like space with a triangular fin at the end of the handle. Its round base is not visible in this picture.
    }
    \label{fig:saucepan}
\end{figure}

\begin{proof}[Proof of Lemma~\ref{lem:cellular} when $X,J$ are finite]
In the finite case, each nonempty $R_\alpha$ is a simplex and we have
\[
X \times_R |\Delta J|
=
\bigcup_{\alpha \in A}
\overline{e}_\alpha \times |R_\alpha|
\]
because $(\alpha,\tau) \in R_J$ iff $\tau \subseteq R_\alpha$.
Let $x \in X_R$.
Then the fiber at~$x$ is
\[
\bigcup_{\alpha \in A_x}
\{x\} \times |R_\alpha|
\]
where $A_x = \{ \alpha \in A \mid x \in \overline{e}_\alpha \}$. Let $e_\beta$ be the carrier of~$x$. Then $x \in \overline{e}_\beta$ so the fiber contains $\{x\} \times |R_\beta|$.
From the definition of the partial order on~$A$, the condition $\alpha \in A_x$ implies $\alpha \geq \beta$, which in turn implies $R_\alpha \subseteq R_\beta$. Thus the fiber is equal to $\{x\} \times |R_\beta|$. 
This is nonempty by Proposition~\ref{prop:cellular-map}, so it is a closed geometric simplex; hence it is contractible and locally contractible.
The Smale mapping theorem~\ref{thm:smale-v2} for finite CW-complexes implies that the map is a homotopy equivalence.
\end{proof}

Readers interested primarily in the finite case may skip the following.

\begin{proof}[Proof of Lemma~\ref{lem:cellular} in full generality]
    We use Proposition~\ref{prop:finite-to-infinite} to extend the result from the finite case.
    Our task is to show that the map
    \[
    \xymatrix{
        X \times_R |\Delta J|
        \ar[r]
        &
        X_R
    }
    \]
    is a homotopy equivalence.
    For each pair $(Y,K)$ where $Y \subseteq X$ is a finite subcomplex and $K \subseteq J$ is a finite subset, we write $S = R \cap (Y \times K)$. We have a diagram
    \[
    \xymatrix{
    X \times_R |\Delta J|
        \ar[rr]
    &&
     X_R
    \\
    Y \times_S | \Delta K|
        \ar[rr]^-{\simeq}
        \ar[u]
    &&
    Y_S
        \ar[u]
    }
    \]
    where the vertical maps are inclusions of finite subcomplexes and the lower map is a homotopy equivalence by the finite case of Lemma~\ref{lem:cellular}.

    Suppose $F \subseteq X \times_R |\Delta J|$ and
    $G \subseteq X_R$
    are finite subcomplexes. The product cells that constitute $F$ involve finitely many cells of $X$, and finitely many simplices of $|\Delta J|$ therefore finitely many elements of $J$. Meanwhile $G$ contains finitely many cells of~$X$; for each cell~$\sigma$ we select $j$ such that $(\sigma,j) \in R$. Choose a finite subcomplex $Y \subseteq X$ and a finite set $K \subseteq J$ that include the finitely many cells of~$X$ and finitely many elements of~$J$ mentioned above. Then $Y \times_S |\Delta K|$ contains~$F$ while
    $Y_S$ contains~$G$.

    The conditions of Proposition~\ref{prop:finite-to-infinite} being satisfied, the map is a homotopy equivalence.    
\end{proof}

The `revisited' Dowker theorem is a special case of the cellular Dowker lemma.

\begin{proof}[Proof of Theorem~\ref{thm:dowker-revisited}]
Let $R, I, J, R_I$ be as in the statement of~\ref{thm:dowker-revisited}.
Apply the cellular Dowker lemma \ref{lem:cellular} to the CW-complex $|\Delta I|$ indexed by $\Delta I$, the set~$J$, and the relation $R_I \subseteq \Delta I \times J$.
The resulting homotopy equivalence is the upper arrow of a commutative diagram
\[
\xymatrix{
|\Delta I|_{R_I}
    \ar@{=}[d]
&&
|\Delta I| \times_{R_I} |\Delta J|
    \ar@{=}[d]
    \ar[ll]_{\simeq}
\\
|\dowker(R)/J|
&&
|\dowker(R)|
    \ar@{>>}[ll]_-{|\psi_2|}
}
\]
where the vertical sides are equalities.
Thus $|\psi_2|$ is a homotopy equivalence. Similarly $|\psi_1|$ is a homotopy equivalence.
\end{proof}

\begin{remark}
\label{rem:classical-functoriality}
Functoriality is immediate.
Let $R \subseteq I_1 \times I_2$ and $S \subseteq J_1 \times J_2$ be relations. Suppose $f = (f_1, f_2)$ where
$f_1 : I_1 \to J_1$,
and
$f_2 : I_2 \to J_2$
are functions with the property that $(i_1, i_2) \in R$ implies $(f_1(i_1), f_2(i_2)) \in S$. Then we have a diagram
\[
\xymatrix{
|\dowker(R)/I_2|
    \ar[d]_{|f_1|}
&&
|\dowker(R)|
    \ar@{>>}[ll]_{|\psi_2|}
    \ar@{>>}[rr]^{|\psi_1|}
    \ar[d]^{|f|}
&&
|\dowker(R)/I_1|
    \ar[d]^{|f_2|}
\\
|\dowker(S)/J_2|
&&
|\dowker(S)|
    \ar@{>>}[ll]_{|\psi_2|}
    \ar@{>>}[rr]^{|\psi_1|}
&&
|\dowker(S)/J_1|
}
\]
where the two squares commute by \eqref{eq:dowkerian-naturality}.
The four corner spaces are geometric simplicial complexes, while the two mediating spaces are geometric prod-complexes of order~2.
\end{remark}

\subsection{Multiway Dowker for prod-complexes}
\label{subsec:multiway-prod}

Here is the main object of our attention.

\begin{definition}[Dowker relational product]
Let $R \subseteq I_1 \times \dots \times I_m$ be a multiway relation. The multiway {Dowker relational product} of~$R$ is the prod-complex on $I_1, \dots, I_m$ defined:
\[
\dowker(R)
=
\{
(\sigma_1, \dots, \sigma_m)
\mid
\sigma_1 \times \dots \times \sigma_m \subseteq R
\}
\leq
\Delta I_1 \times \dots \times \Delta I_m
\]
It is the largest prod-complex on $I_1, \dots, I_m$ whose vertex set is~$R$.
\end{definition}

\begin{theorem}[multiway Dowker theorem]
    \label{thm:multiway-main-theorem}
    Let $R \subseteq I_1 \times \dots \times I_m$ be a multiway relation. Then the geometric Dowkerian quotient map
    $
    \xymatrix{
    |\psi_k| :
    |\dowker(R)| 
        \ar@{>>}[r]
    &
    |\dowker(R)/I_k|
    }
    $
    is a homotopy equivalence for all~$k$.
\end{theorem}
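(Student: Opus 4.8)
The plan is to deduce the theorem from the cellular Dowker lemma~\ref{lem:cellular}, in exact parallel with the way Theorem~\ref{thm:dowker-revisited} was obtained from it in the binary case. Because the construction of $\dowker(R)$ is symmetric in the indices $1,\dots,m$, it is enough to treat $k=m$.

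I would take the CW-complex $X=|\Delta I_1|\times\dots\times|\Delta I_{m-1}|$, which is a regular CW-complex whose cell poset is the product poset $A=\Delta I_1\times\dots\times\Delta I_{m-1}$ with the componentwise inclusion order as its CW-complex partial order (this follows from the material on prod-complexes in Section~\ref{sec:prod}: the open cell $e_{(\tau_1,\dots,\tau_{m-1})}$ meets the closed cell $|\sigma_1|\times\dots\times|\sigma_{m-1}|$ iff $\tau_j\subseteq\sigma_j$ for every $j$, and this componentwise relation is already transitive). I then take $J=I_m$ and define a relation $\widehat R\subseteq A\times I_m$ by declaring $\bigl((\sigma_1,\dots,\sigma_{m-1}),i\bigr)\in\widehat R$ exactly when $\sigma_1\times\dots\times\sigma_{m-1}\times\{i\}\subseteq R$. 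Downward closure of $\widehat R$ with respect to the CW-complex order on $A$ and the trivial order on $I_m$ is immediate, since shrinking any $\sigma_j$ only shrinks the box $\sigma_1\times\dots\times\sigma_{m-1}\times\{i\}$. So the hypotheses of Lemma~\ref{lem:cellular} are met --- in particular no finiteness is needed, as the lemma already incorporates the passage to infinite complexes via Proposition~\ref{prop:finite-to-infinite}.

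The next step is to identify the objects produced by the lemma. A cell $\overline e_{(\sigma_1,\dots,\sigma_{m-1})}$ lies in $X_{\widehat R}$ iff some $i\in I_m$ satisfies $\sigma_1\times\dots\times\sigma_{m-1}\times\{i\}\subseteq R$; replacing $\{i\}$ by any nonempty $\sigma_m\ni i$ and conversely shows this is the same as $(\sigma_1,\dots,\sigma_{m-1})\in\dowker(R)/I_m$, so $X_{\widehat R}=|\dowker(R)/I_m|$. Similarly, $(\alpha,\tau)\in\widehat R_{I_m}$ unwinds to $\sigma_1\times\dots\times\sigma_{m-1}\times\tau\subseteq R$, so the cellular Dowker relational product $X\times_{\widehat R}|\Delta I_m|$ is the union of the product cells $|\sigma_1|\times\dots\times|\sigma_{m-1}|\times|\tau|$ over all $(\sigma_1,\dots,\sigma_{m-1},\tau)\in\dowker(R)$, that is, $X\times_{\widehat R}|\Delta I_m|=|\dowker(R)|$. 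Under these equalities the projection $X\times|\Delta I_m|\to X$ forgetting the last factor is precisely the geometric Dowkerian quotient map $|\psi_m|\colon|\dowker(R)|\to|\dowker(R)/I_m|$. Lemma~\ref{lem:cellular} then asserts that $|\psi_m|$ is a homotopy equivalence, and relabeling the factors gives the result for every $k$.

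I do not anticipate a genuinely difficult step: all the topological content has been absorbed into Lemma~\ref{lem:cellular} (and thence into Smale's theorem~\ref{thm:smale-v2}). What requires care is purely bookkeeping --- verifying that the CW-complex partial order on $|\Delta I_1|\times\dots\times|\Delta I_{m-1}|$ really is the componentwise order, so that the hypothesis ``downwardly closed with respect to the CW-complex partial order'' coincides with the obviously-true componentwise downward closure of $\widehat R$, and then checking that the three identifications above ($X_{\widehat R}$, $X\times_{\widehat R}|\Delta I_m|$, and the projection) are literal equalities of subcomplexes and maps rather than mere homeomorphisms. Both are routine given Section~\ref{sec:prod}. (One could instead run the argument with $X=|\dowker(R)/I_m|$; this makes $X_{\widehat R}$ all of $X$ but shifts the order bookkeeping onto a subcomplex.)
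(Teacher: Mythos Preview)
Your proposal is correct and follows essentially the same approach as the paper: reduce to $k=m$ by symmetry, apply the cellular Dowker lemma with $X=|\Delta I_1|\times\cdots\times|\Delta I_{m-1}|$, $J=I_m$, and the relation $\widehat R$ you describe, then identify the two spaces in the lemma's conclusion with $|\dowker(R)|$ and $|\dowker(R)/I_m|$ and the projection with $|\psi_m|$. The only cosmetic difference is that the paper records the identification $\bigl(|\Delta I_1|\times\cdots\times|\Delta I_{m-1}|\bigr)\times_{\widehat R}|\Delta I_m|\cong|\dowker(R)|$ as a tautological rebracketing homeomorphism rather than a literal equality, which is exactly the bookkeeping point you flag at the end.
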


Thus we have a diagram of $m$ homotopy equivalences
    \begin{equation}
    \label{eq:hub-diagram}
    \xymatrix{
    &&& |\dowker(R)|
        \ar@{>>}[llld]_{|\psi_1|}
        \ar@{>>}[ld]^(0.4){|\psi_2|}
        \ar@{>>}[rrrd]^{|\psi_m|}
    \\
    |\dowker(R)/I_1|
    &&
    |\dowker(R)/I_2|
    &&
    \dots
    &&
   |\dowker(R)/I_m|
    }
    \end{equation}
through which the spaces $|\dowker(R)/I_k|$ are canonically homotopy equivalent. See Figures \ref{fig:rco-iterated-all}, \ref{fig:rco}.

\begin{figure}
\centering
\includegraphics[scale=1]{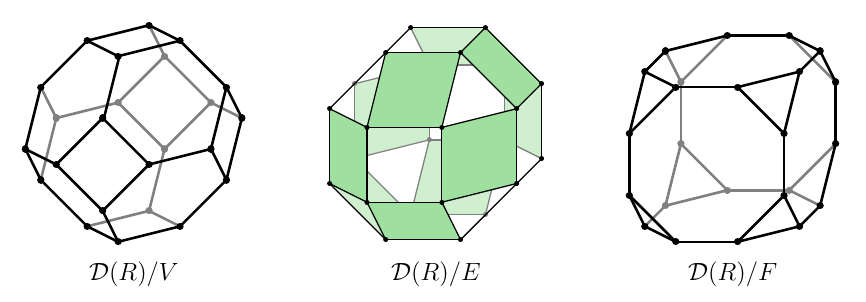}
    \caption{Let $V,E,F$ denote the sets of vertices, edges, and faces of a cube. Let $R$ be the set of triples $(v,e,f)$ such that $v$ is contained in $e$ is contained in~$f$. The three Dowkerian quotients are respectively the 1-skeleton of a truncated octahedron, the union of the twelve `rhombic' squares in a rhombicuboctahedron, and the 1-skeleton of a truncated cube. Each is homotopy equivalent to a 14-times punctured 2-sphere.
    See Figure~\ref{fig:rco-iterated-all} for $\dowker(R)$ itself.
    }
    \label{fig:rco}
\end{figure}

\begin{proof}
We may assume $k=m$.
Apply the cellular Dowker lemma \ref{lem:cellular} with:
\begin{align*}
    X &=
    |\Delta I_1| \times \dots \times |\Delta I_{m-1}|
    \\
    A &=
    \Delta_1 \times \dots \times \Delta_{m-1}
    \\
    J &= I_m
    \\
    \tilde{R} &=
    \{
    ((\sigma_1, \dots, \sigma_{m-1}), j)
    \mid
    \sigma_1 \times \dots \times \sigma_{m-1} \times \{j\}
    \subseteq R
    \}
\end{align*}
The resulting homotopy equivalence is the upper arrow in the commutative diagram
\[
\xymatrix{
\big( 
|\Delta I_1| \times \dots \times |\Delta I_{m-1}| 
\big)
\times_{\tilde{R}} |\Delta I_m|
    \ar[d]_{\cong}
    \ar[rr]^-{\simeq}
&&
\big( |\Delta I_1| \times \dots \times|\Delta I_{m-1}| \big)_{\tilde{R}}
    \ar@{=}[d]
\\
|\dowker(R)|
    \ar[rr]^{|\psi_m|}
&&
|\dowker(R)/I_m|
}
\]
where the right-hand side is an equality and the left-hand arrow is the homeomorphism defined by the following tautological formula:
\[
\xymatrix{
((x_1, \dots, x_{m-1}), x_m)
    \ar@{|->}[r]
&
(x_1, \dots, x_{m-1}, x_m)
}
\]
It follows that $|\psi_m|$ is a homotopy equivalence as claimed.
\end{proof}

\begin{remark}
    \label{rem:multiway-naturality}
    For functoriality, let $S \subseteq J_1 \times \dots \times J_m$ and suppose
    \[
    f = (f_1, \dots, f_m)
    \quad
    \text{where}
    \quad 
    \text{$f_k : I_k \to J_k$ for all $k$}
    \]
    satisfies the condition
    \[
    (i_1, \dots, i_m) \in R
    \; \Rightarrow \;
    (f_1(i_1), \dots, f_m(i_m)) \in S
    \]
    for all $i_1, \dots, i_m$.
    Equation~\eqref{eq:dowkerian-naturality} provides commutative diagrams for all~$k$:
    \[
    \xymatrix{
    |\dowker(R)| 
        \ar@{>>}[rr]^{|\psi_k|}
        \ar[d]_-{|f|}
    &&
    |\dowker(R)/I_k|
        \ar[d]^-{|f_{/k}|}
    \\
    |\dowker(S)| 
        \ar@{>>}[rr]^{|\psi_k|}
    &&
    |\dowker(S)/J_k|    
    }
    \]
    These squares can be combined, like pages in a book along the spine $|\dowker(R)| \stackrel{|f|}{\longrightarrow} |\dowker(S)|$, to confirm that the diagram \eqref{eq:hub-diagram} is functorial in $R$.
\end{remark}

\subsection{Iterated Dowkerian quotients}
\label{subsec:iterated}

The multiway Dowker theorem tells us that we can collapse any one of the factors of the Dowker relational product of a multiway relation without changing the homotopy type. %
What happens if we collapse two or more factors?

It is tempting to simply apply Theorem~\ref{thm:multiway-main-theorem} to each quotient, but we emphasize that this will not work: in most cases $\dowker(R)/I_k$ is not itself a multiway Dowker relational product, so its Dowkerian quotient maps need not be homotopy equivalences.
Figure~\ref{fig:hexagon} contains the simplest example: a square without its interior cannot be a Dowker relational product for any binary relation, and it is not homotopy equivalent to its Dowkerian quotients.

We introduce some notation.
Let $P$ be a prod-complex on $I_1, \dots, I_m$ and $k < \ell$. Defining
\[
P/(I_k, I_{\ell})
=
\{
(\sigma_1, \dots, \widehat{\sigma}_k, \dots, \widehat{\sigma}_{\ell}, \dots, \sigma_m)
\mid
(\sigma_1, \dots, \sigma_m) \in P
\}
\]
it follows that
\[
P/(I_k, I_\ell)
=
(P/I_k)/I_{\ell}
=
(P/I_{\ell})/I_{k}
\]
by iterating \eqref{eq:dowkerian-quotient}.
Higher-order iterated quotients are handled similarly.
See Figure~\ref{fig:rco-iterated} for a trio of examples.

\begin{figure}
\centering
\includegraphics[scale=1]{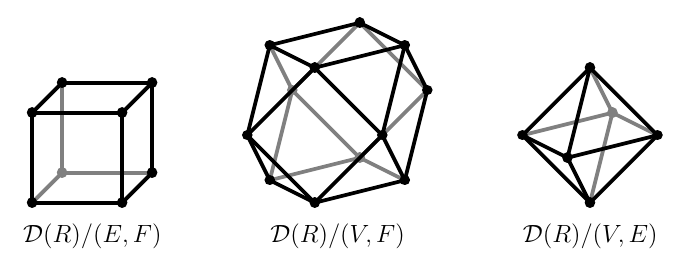}
    \caption{Continuing from Figure~\ref{fig:rco}, the three iterated Dowkerian quotients are respectively the 1-skeleton of a cube, the 1-skeleton of a cuboctahedron, and the 1-skeleton of an octahedron.}
    \label{fig:rco-iterated}
\end{figure}

`Complementary' iterated quotients can be made independent of one another:

\begin{example}
    \label{ex:two-factor-collapse}
    Let $R_1 \subseteq I_1 \times J_1$ and $R_2 \subseteq I_2 \times J_2$ be binary relations. Combining them to obtain a 4-way relation
    \[
    R_1 \times R_2
    \subseteq
    I_1 \times J_1 \times I_2 \times J_2
    \]
    we get
    \[
    \dowker(R_1 \times R_2)/(I_2, J_2)
     =
    \dowker(R_1)
    \quad \text{and} \quad
    \dowker(R_1 \times R_2)/(I_1, J_1)    
    =
    \dowker(R_2).
    \qedhere\]
    Since we can assign arbitrary simplicial complex homotopy types to these two spaces,\footnote{%
    Let $S$ be an arbitrary simplicial complex on~$I$, let $J = S$, and let
    $
    R = \{ 
    (i,\sigma) \in I \times J
    \mid
    i \in \sigma
    \}
    $.
    Then $\dowker(R)/J = S$ and hence $|\dowker(R)| \simeq |\dowker(R)/J| = |S|$.
    }
    there can be no general relation between these iterated quotients.
\end{example}

We have more to say about iterated quotients of \emph{ternary} relations in section~\ref{sec:ternary}.

\section{Simplexification}
\label{sec:simplexification}

We are ready to transfer our results to simplicial complexes.
In section~\ref{subsec:simplexification}, we describe a `simplexification' operation that converts a prod-complex to a simplicial complex, along with a `squashing map' that gives a homotopy equivalence from the simplexification to the prod-complex.
In section~\ref{subsec:simp-squash-fun-natural}, we discuss the functoriality of these constructions and their relationship with Dowkerian quotients. This sets up a clean treatment of the simplicial multiway Dowker theorem in section~\ref{subsec:multiway-simplicial}.

Before we begin, we comment on what simplexification \emph{isn't}.
One way to convert a prod-complex into a simplicial complex is to construct its barycentric subdivision;
its simplices are the finite nonempty chains in the prod-complex viewed as a poset. One advantage is that it is homeomorphic to the original prod-complex when geometrically realized. The main disadvantage is that it has many more low-dimensional cells, making it computationally more expensive. This is true even in the order~1 case where the prod-complex is a simplicial complex, which ideally should not need conversion.
In contrast, simplexification retains the same vertex set and leaves simplicial complexes unchanged.

\subsection{Simplexification and squashing}
\label{subsec:simplexification}

Let $P$ be a prod-complex on $I_1, \dots, I_m$. To each prod-simplex $\sigma = (\sigma_1, \dots, \sigma_m)$ we assign a simplex\footnote{%
We can think of $\simp(\sigma)$ as a geometrically `thickened' version of $\sigma$ with the same vertex set $\sigma_1 \times \dots \times \sigma_m$.
}
\[
\simp((\sigma_1, \dots, \sigma_m))
= 
\sigma_1 \times \dots \times \sigma_m
\; \in \; \Delta(I_1 \times \dots \times I_m).
\]

\begin{definition}[simplexification]  
\label{def:simplexification}
The simplexification of~$P$ is the simplicial complex on $I_1 \times \dots \times I_m$ defined as follows:
\[
\simp(P)
=
\bigcup_{\sigma \in P} \Delta(\simp(\sigma))
\]
Its simplices are characterized by the following property:
\[
\rho \in \simp(P)
\; \Leftrightarrow \;
\text{there exists $(\sigma_1, \dots, \sigma_m) \in P$ such that $\rho \subseteq \sigma_1 \times \dots \times \sigma_m$}
\]
\end{definition}

Note that $\simp(P) = P$ when $m=1$.
We must go to $m=2$ for the archetypal example:

\begin{example}[simplexification of a square]
    Recall that $\Delta^n = \Delta{\{0,\dots,n\}}$ denotes the standard n-simplex.
    Then    
    \[
    Q = \Delta^{1} \times \Delta^{1}
    \]
    is a prod-complex on $\{0,1\}, \{0,1\}$ with geometric realization 
    \[
    |Q| = |\Delta^{1}| \times |\Delta^{1}|
    \]
    homeomorphic to a square;
    whereas
    \[
    \simp(Q) = \Delta(\{0,1\} \times \{0,1\})
    \]
    is a complete simplex on four vertices, with geometric realization
    \[
    |\simp(Q)| \cong |\Delta^{3}|
    \]
    homeomorphic to a tetrahedron.
\end{example}

A tetrahedron viewed from a particular angle has the silhouette of a square.
A photograph of such a view implicitly `squashes' the tetrahedron onto the square $\boxtimes \to \square$.
Guided by this, we give a general definition of squashing map
\[
\xi_P :
\xymatrix{|\simp(P)| \ar[r] & |P|}
\]
in the following way.
Each projection $\pi_k : I_1 \times \dots \times I_m \to I_k$ induces a linear map
\[
\xymatrix{
\R_{I_1 \times \dots \times I_m}
    \ar[rrr]^-{\xi_k = \R_{\pi_k}}
&&&
\R_{I_k}
}
\]
We combine these into a single map $\xi = (\xi_1, \dots, \xi_m)$
and define $\xi_P$ to be the retriction of~$\xi$ indicated in the following diagram (the vertical arrows being the inclusion maps):
\[
\xymatrix{
\R_{I_1 \times \dots \times I_m}
    \ar[rrr]^-{\xi = (\xi_1, \dots, \xi_m)}
&&&
\R_{I_1} \times \dots \times \R_{I_m}
\\
|\simp(P)|
    \ar[u]
    \ar@{>>}[rrr]^-{\xi_P}
&&&
|P|
    \ar[u]
}
\]
Later (but not right now) we may write $\xi$ instead of $\xi_P$ to avoid cumbersome notation.

\begin{proposition}
    \label{prop:squash-well-defined}
    The squashing map $\xi_P$ is well-defined and surjective.
\end{proposition}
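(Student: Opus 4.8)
The plan is to verify two things: first, that $\xi = (\xi_1,\dots,\xi_m)$ actually sends $|\simp(P)|$ into $|P|$ (this is the content of "well-defined"), and second, that every point of $|P|$ is hit. For well-definedness, I would argue cell by cell. A point $x \in |\simp(P)|$ lies in some closed simplex $|\rho|$ with $\rho \in \simp(P)$, so by Definition~\ref{def:simplexification} there is a prod-simplex $(\sigma_1,\dots,\sigma_m) \in P$ with $\rho \subseteq \sigma_1 \times \dots \times \sigma_m$. Writing $x = (x_{(i_1,\dots,i_m)})$ in barycentric coordinates supported on $\rho$, the $k$-th component $\xi_k(x)$ has coordinates $(\xi_k(x))_{i_k} = \sum\{x_{(i_1,\dots,i_m)} \mid \pi_k(i_1,\dots,i_m) = i_k\}$, which is a nonnegative vector summing to $1$ and supported on $\pi_k(\rho) \subseteq \sigma_k$. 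Hence $\xi_k(x) \in |\sigma_k|$ for each $k$, so $\xi(x) \in |\sigma_1| \times \dots \times |\sigma_m| = |\sigma| \subseteq |P|$. This shows $\xi$ restricts to a well-defined map $\xi_P : |\simp(P)| \to |P|$; continuity is automatic since $\xi_P$ is the restriction of the continuous (indeed linear-componentwise) map $\xi$, using that all linear maps on the spaces $\R_I$ are continuous as noted earlier in the excerpt.

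For surjectivity, take a point $y = (y_1,\dots,y_m) \in |P|$, lying in some closed cell $|\sigma| = |\sigma_1| \times \dots \times |\sigma_m|$ with $\sigma = (\sigma_1,\dots,\sigma_m) \in P$. I would construct an explicit preimage by taking a "product" of the barycentric coordinate vectors: define $x$ on the vertex set $\sigma_1 \times \dots \times \sigma_m$ by
\[
x_{(i_1,\dots,i_m)} = (y_1)_{i_1} \cdot (y_2)_{i_2} \cdots (y_m)_{i_m}.
\]
This is a nonnegative vector whose coordinates sum to $\prod_k \big(\sum_{i_k} (y_k)_{i_k}\big) = 1$, and it is supported on a subset of $\sigma_1 \times \dots \times \sigma_m$, so the simplex $\rho = \{(i_1,\dots,i_m) \mid x_{(i_1,\dots,i_m)} > 0\}$ satisfies $\rho \subseteq \sigma_1 \times \dots \times \sigma_m$ and hence $\rho \in \simp(P)$; thus $x \in |\simp(P)|$. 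A direct computation of the marginal sum gives $(\xi_k(x))_{i_k} = \sum_{(i_1,\dots,i_m): \pi_k = i_k} \prod_\ell (y_\ell)_{i_\ell} = (y_k)_{i_k} \cdot \prod_{\ell \ne k}\big(\sum_{i_\ell}(y_\ell)_{i_\ell}\big) = (y_k)_{i_k}$, so $\xi_P(x) = y$. This proves surjectivity.

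I do not anticipate a serious obstacle here; the statement is essentially a bookkeeping exercise. The one point requiring a little care is making sure the "product of barycentric coordinates" section in the surjectivity step lands in $|\simp(P)|$ and not merely in $\R_{I_1 \times \dots \times I_m}$ — but this follows cleanly because its support is contained in the vertex set $\sigma_1 \times \dots \times \sigma_m$ of a single prod-simplex of $P$, which is exactly the defining condition for membership in $\simp(P)$. Everything else (nonnegativity, coordinates summing to one, continuity) is routine. If one wants to avoid coordinates entirely, the surjectivity argument can be phrased as: $\xi_P$ restricted to the closed simplex $|\simp(\sigma)|$ surjects onto $|\sigma|$ because it is the affine map dual to the diagonal-type inclusion of vertex sets, and an affine surjection between (realizations of) simplices induced by a surjection of vertex sets — here $\sigma_1\times\dots\times\sigma_m \twoheadrightarrow \sigma_k$ on each factor — is onto; but the explicit product formula is the most transparent witness.
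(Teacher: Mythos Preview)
Your proof is correct. The paper takes a slightly more streamlined route: it first isolates the cellwise statement $\xi(|\simp(\sigma)|) = |\sigma|$ as a separate proposition (Proposition~\ref{prop:squash-sigma}), proved by observing that $\xi$ carries the vertices of $|\simp(\sigma)|$ bijectively onto the vertices of $|\sigma|$ and that linear maps preserve convex hulls; Proposition~\ref{prop:squash-well-defined} then follows in one line by taking the union over $\sigma \in P$, which gives well-definedness and surjectivity simultaneously. Your barycentric-coordinate computations unpack this convex-hull argument explicitly, and your product-of-coordinates preimage is a concrete section where the paper simply invokes surjectivity of an affine map onto a convex hull. You in fact sketch the paper's argument yourself in your closing paragraph, so the two approaches are close in spirit; the paper's packaging has the advantage that the cellwise equality $\xi(|\simp(\sigma)|) = |\sigma|$ is reused later (in Proposition~\ref{prop:squash-preimage} and Theorem~\ref{thm:squashing-HE}).
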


\begin{proposition}
    \label{prop:squash-sigma}
    We have $\xi(|\simp(\sigma|) = |\sigma|$ for all $\sigma \in \Delta I_1 \times \dots \times \Delta I_m$.
\end{proposition}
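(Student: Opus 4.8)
The plan is to compute $\xi(|\simp(\sigma)|)$ directly, using nothing more than the linearity of $\xi$ together with the description of the realizations involved. Write $\sigma = (\sigma_1, \dots, \sigma_m)$. By definition $\simp(\sigma) = \sigma_1 \times \dots \times \sigma_m$, so its geometric realization is $|\simp(\sigma)| = \conv\{\basis(i_1, \dots, i_m) \mid i_k \in \sigma_k \text{ for all } k\}$, the convex hull of the finitely many standard basis vectors indexed by tuples drawn from the $\sigma_k$. Since $\xi = (\xi_1, \dots, \xi_m)$ is assembled from the linear maps $\xi_k = \R_{\pi_k}$, it is itself linear, hence affine, and therefore carries convex hulls to convex hulls: $\xi(\conv S) = \conv(\xi S)$ for any $S$. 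Because $\pi_k(i_1, \dots, i_m) = i_k$ we have $\xi_k(\basis(i_1, \dots, i_m)) = \basis(i_k)$, so $\xi(\basis(i_1, \dots, i_m)) = (\basis(i_1), \dots, \basis(i_m))$. Combining these two observations yields $\xi(|\simp(\sigma)|) = \conv\{(\basis(i_1), \dots, \basis(i_m)) \mid i_k \in \sigma_k\}$.

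The remaining task — and the only point requiring an actual computation — is to identify this convex hull with $|\sigma| = |\sigma_1| \times \dots \times |\sigma_m|$; that is, to check the elementary fact that a product of simplices equals the convex hull of the product of their vertex sets. One inclusion is immediate: each tuple $(\basis(i_1), \dots, \basis(i_m))$ lies in $|\sigma_1| \times \dots \times |\sigma_m|$, which is convex, so the convex hull of these tuples is contained in the product. For the reverse inclusion I would take an arbitrary point $(z^{(1)}, \dots, z^{(m)})$ of the product, write each coordinate in barycentric form $z^{(k)} = \sum_{i \in \sigma_k} \mu^{(k)}_i \basis(i)$, and introduce the ``product coordinates'' $\lambda_{(i_1, \dots, i_m)} = \mu^{(1)}_{i_1} \cdots \mu^{(m)}_{i_m}$. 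These are nonnegative and sum to $\prod_k \bigl(\sum_i \mu^{(k)}_i\bigr) = 1$, and a short inspection of the $k$-th component — grouping the terms by the value of $i_k$, so that the factors for $\ell \ne k$ collapse to $1$ — shows $\sum_{(i_1, \dots, i_m)} \lambda_{(i_1, \dots, i_m)} (\basis(i_1), \dots, \basis(i_m)) = (z^{(1)}, \dots, z^{(m)})$, exhibiting the point as a convex combination of the tuples above.

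I expect no serious obstacle: the proposition is essentially bookkeeping, and the product-of-coordinates formula is the one trick that does all the work. The single point to be careful about is purely formal: a product of simplices need not itself be a simplex (indeed $|\sigma|$ is a cell of a prod-complex, not a simplex), so on the \emph{target} side one cannot invoke ``an affine map of a simplex is determined by its vertices'' — it is the \emph{source} $|\simp(\sigma)|$ that is a genuine simplex, and this is exactly what makes the ``$\xi$ of a convex hull is a convex hull'' step legitimate. An equivalent route, if one prefers to avoid convex hulls altogether, is to verify both inclusions coordinatewise: for ``$\subseteq$'' the $k$-th coordinate of $\xi(x)$ is manifestly a point of $|\sigma_k|$ whenever $x \in |\simp(\sigma)|$, and for ``$\supseteq$'' one produces a preimage of a given point of $|\sigma|$ via the same product-of-coordinates formula. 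I would present the convex-hull version as the cleaner of the two; as a byproduct it re-establishes surjectivity of the squashing map on each cell, and hence (cell by cell) the surjectivity asserted in Proposition~\ref{prop:squash-well-defined}.
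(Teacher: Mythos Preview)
Your proposal is correct and follows essentially the same approach as the paper: compute $\xi$ on basis vectors, use linearity to push convex hulls to convex hulls, and identify the image with $|\sigma|$. The only difference is one of detail---the paper dispatches the identification $\conv\{(\basis(i_1),\dots,\basis(i_m))\} = |\sigma_1|\times\dots\times|\sigma_m|$ in a single phrase (``$\xi$ carries the vertices of $|\simp(\sigma)|$ bijectively onto the vertices of $|\sigma|$''), treating the fact that a product of simplices is the convex hull of its vertices as evident, whereas you spell it out via the product-of-barycentric-coordinates formula.
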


\begin{proof}
    From the formula
    \[
    \xymatrix{
    \basis((i_1, \dots, i_m))
        \ar@{|->}[r]^-{\xi}
    &
    (\basis(i_1), \dots, \basis(i_m))
    }
    \]
    it follows that $\xi$ carries the vertices of $|\simp(\sigma)|$ bijectively onto the vertices of $|\sigma|$. Convex hulls are preserved by linear maps, so $\xi$ carries $|\simp(\sigma)|$ onto $|\sigma|$.
\end{proof}

\begin{proof}[Proof of Proposition~\ref{prop:squash-well-defined}]
    Take the union of Proposition~\ref{prop:squash-sigma} over $\sigma \in P$.
\end{proof}

As a step towards understanding the fibers of $\xi_P$, we identify the preimage of each~$|\sigma|$.

\begin{proposition}
    \label{prop:squash-preimage}
    For each $\sigma \in P$ we have $\xi_P^{-1} |\sigma| = |\simp(\sigma)|$.
\end{proposition}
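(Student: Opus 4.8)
The plan is to establish the two inclusions separately, the forward one being essentially immediate and the reverse one reducing to a computation of carriers in barycentric coordinates. For $|\simp(\sigma)| \subseteq \xi_P^{-1}|\sigma|$: since $\sigma \in P$ we have $\Delta(\simp(\sigma)) \leq \simp(P)$, so $|\simp(\sigma)|$ is a subcomplex of $|\simp(P)|$ and hence lies in the domain of $\xi_P$; and Proposition~\ref{prop:squash-sigma} gives $\xi(|\simp(\sigma)|) = |\sigma|$, whence $|\simp(\sigma)| \subseteq \xi^{-1}(|\sigma|) \cap |\simp(P)| = \xi_P^{-1}|\sigma|$.

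For the reverse inclusion I would argue as follows. Take $x \in |\simp(P)|$ and let $\rho = \{\,(i_1,\dots,i_m) \in I_1 \times \dots \times I_m \mid x_{(i_1,\dots,i_m)} > 0\,\}$ be its carrier, a simplex of $\simp(P)$. The crucial step is to identify the carrier of $\xi_k(x) = \R_{\pi_k}(x)$ inside $\Delta I_k$. In coordinates, $(\xi_k(x))_{i_k} = \sum\bigl(x_{(j_1,\dots,j_m)} \mid \pi_k(j_1,\dots,j_m) = i_k\bigr)$ is a sum of nonnegative terms, so it is strictly positive exactly when some $(j_1,\dots,j_m) \in \rho$ satisfies $j_k = i_k$; that is, the carrier of $\xi_k(x)$ is precisely the projection $\pi_k(\rho) \subseteq I_k$. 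Consequently $\xi_P(x) \in |\sigma| = |\sigma_1| \times \dots \times |\sigma_m|$ if and only if $\xi_k(x) \in |\sigma_k|$ for every $k$, which by the carrier characterization of points of a geometric simplex holds if and only if $\pi_k(\rho) \subseteq \sigma_k$ for all $k$. When this holds we get $\rho \subseteq \pi_1(\rho) \times \dots \times \pi_m(\rho) \subseteq \sigma_1 \times \dots \times \sigma_m = \simp(\sigma)$, so the carrier of $x$ is a face of $\simp(\sigma)$ and therefore $x \in |\simp(\sigma)|$. Combining the two inclusions yields $\xi_P^{-1}|\sigma| = |\simp(\sigma)|$.

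I do not anticipate a serious obstacle. The one step requiring care is the identification of the carrier of $\xi_k(x)$ with $\pi_k(\rho)$: this rests squarely on the nonnegativity of barycentric coordinates, which prevents cancellation from making a projected coordinate vanish. Everything else is routine bookkeeping with the definitions of $|\sigma|$, $\simp(\sigma)$, and the product CW-structure on $|\Delta I_1| \times \dots \times |\Delta I_m|$.
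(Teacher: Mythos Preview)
Your proof is correct and follows essentially the same route as the paper's. The paper separates your carrier computation into a standalone lemma (Lemma~\ref{lem:squash-carriers}: the carrier of $\xi(x)$ is $(\pi_1(\rho),\dots,\pi_m(\rho))$), but the argument you give inline---including the observation that nonnegativity of barycentric coordinates prevents cancellation---is exactly the content of that lemma and its proof.
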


\begin{lemma}
    \label{lem:squash-carriers}
    Suppose $x \in |\Delta(I_1 \times \dots \times I_m)|$ has carrier $\rho$. Then $\xi(x) \in |\Delta I_1| \times \dots \times |\Delta I_m|$ has carrier $(\pi_1(\rho), \dots, \pi_m(\rho))$.
\end{lemma}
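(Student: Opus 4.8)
The plan is to work entirely in barycentric coordinates and reduce the statement to two elementary observations: one about how the linear maps $\xi_k = \R_{\pi_k}$ act on coordinates, and one about carriers in a product CW-complex. Write $x = (x_r)_{r \in I_1 \times \dots \times I_m} \in |\Delta(I_1 \times \dots \times I_m)|$, so that $x_r \geq 0$, only finitely many are nonzero, $\sum_r x_r = 1$, and by the description of carriers for geometric simplicial complexes given earlier, the carrier of~$x$ is $\rho = \{ r \mid x_r > 0 \}$, a finite nonempty subset of $I_1 \times \dots \times I_m$.

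First I would compute $\xi_k(x)$ coordinatewise. By the coordinate formula for $\R_{\pi_k}$ (the same formula displayed for $\R_f$ in the discussion of simplicial maps), $\xi_k(x) = (y^{(k)}_{i_k})_{i_k \in I_k}$ where $y^{(k)}_{i_k} = \sum\big( x_r \mid \pi_k(r) = i_k \big)$. Since all $x_r \geq 0$, we have $y^{(k)}_{i_k} > 0$ if and only if some $r$ with $\pi_k(r) = i_k$ has $x_r > 0$, i.e.\ if and only if $i_k \in \pi_k(\rho)$. Hence the carrier of $\xi_k(x)$ in $|\Delta I_k|$ is exactly $\pi_k(\rho)$ (which is a finite nonempty subset of $I_k$, so a genuine simplex). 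This is the one computation to carry out, and it is routine.

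Next I would invoke the structure of the product CW-complex $|\Delta I_1| \times \dots \times |\Delta I_m|$: its open cells are precisely the products $e_{\sigma_1} \times \dots \times e_{\sigma_m}$ of open cells of the factors, and since for each~$k$ the open cells $e_{\sigma_k}$ partition $|\Delta I_k|$, these products partition the product space. Therefore the carrier of a tuple $(z_1, \dots, z_m)$ is the prod-simplex $(\mathrm{carrier}(z_1), \dots, \mathrm{carrier}(z_m))$. Applying this to $(z_1,\dots,z_m) = (\xi_1(x), \dots, \xi_m(x)) = \xi(x)$ and combining with the previous paragraph yields that the carrier of $\xi(x)$ is $(\pi_1(\rho), \dots, \pi_m(\rho))$, as claimed.

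The only point requiring a little care — and the closest thing to an obstacle — is making the "carrier of a tuple is the tuple of carriers" statement precise, since in general the product CW-complex carries the compactly generated topology rather than the naive product topology. But this does not affect the cell decomposition: the underlying \emph{set} of open cells, and hence the notion of carrier, is unchanged by the retopologization, so the observation goes through verbatim. Everything else is bookkeeping with barycentric coordinates.
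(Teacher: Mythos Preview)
Your proposal is correct and follows essentially the same route as the paper: both arguments reduce to showing that the carrier of $\xi_k(x)$ in $|\Delta I_k|$ is $\pi_k(\rho)$ for each~$k$, and both establish this by writing $x$ in barycentric coordinates and reading off which coordinates of $\xi_k(x)$ are positive. Your treatment is slightly more explicit about the product-cell structure and the compactly generated topology, but the underlying computation is identical.
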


\begin{proof}
Equivalently, $\xi_k(x) \in |\Delta I_k|$ has carrier $\pi_k(\rho)$ for all~$k$. Let $i = (i_1, \dots, i_m)$ denote a typical element of $I_1 \times \dots \times I_n$.
Writing $x$ as a convex combination
\[
x =
\sum_{i \in \rho} \lambda_i \basis((i_1, \dots, i_m)) 
\]
with positive coefficients, we have
\[
\xi_k(x)
=
\sum_{i \in \rho} \lambda_i \basis(i_k)
=
\sum_{j \in I_k}
\mu_j \basis(j)
\quad
\text{where} \;
\mu_j = \sum\big( \lambda_i \mid i \in \rho,\, i_k = j \big)
\]
so the positive coefficients occur precisely at the vertices of $\pi_k(\rho)$.
\end{proof}

\begin{proof}[Proof of Proposition~\ref{prop:squash-preimage}]
By Proposition~\ref{prop:squash-sigma} it suffices to show that $\xi_P^{-1}|\sigma| \subseteq |\simp(\sigma)|$.
Suppose $x \in \xi_P^{-1}|\sigma|$. Let $\rho$ denote its carrier. By the lemma, $(\pi_1(\rho), \dots, \pi_m(\rho))$ is the carrier of $\xi(x)$. Since $\xi(x) \in |\sigma| = |\sigma_1| \times \dots \times |\sigma_m|$ it follows that $\pi_k(\rho) \subseteq \sigma_k$ for all~$k$, therefore $\rho \subseteq \sigma_1 \times \dots \times \sigma_m = \simp(\sigma)$. It follows that $x \in |\simp(\sigma)|$.
\end{proof}

\begin{theorem}
    \label{thm:squashing-HE}
    The squashing map $\xi_P : |\simp(P)| \to |P|$ is a homotopy equivalence.
\end{theorem}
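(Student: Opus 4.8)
The plan is to establish the homotopy equivalence first for finite prod-complexes, using the finite version of Smale's mapping theorem~\ref{thm:smale-v2}, and then to remove the finiteness hypothesis via Proposition~\ref{prop:finite-to-infinite}. For a finite prod-complex $P$, both $|\simp(P)|$ and $|P|$ are finite CW-complexes, hence compact metrizable and locally contractible, and $\xi_P$ is continuous by construction and surjective by Proposition~\ref{prop:squash-well-defined}; so the only thing to check is that every fiber $\xi_P^{-1}(y)$ is nonempty, contractible and locally contractible.

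The heart of the argument is the identification of these fibers. Given $y \in |P|$, I would let $\tau = (\tau_1, \dots, \tau_m) \in P$ be its carrier, so that $y = (y_1, \dots, y_m)$ with each $y_k$ having carrier $\tau_k$. Since $y \in |\tau|$, Proposition~\ref{prop:squash-preimage} gives $\xi_P^{-1}|\tau| = |\simp(\tau)| = |\Delta(\tau_1 \times \dots \times \tau_m)|$, so $\xi_P^{-1}(y)$ is contained in this single cell and therefore coincides with the fiber over~$y$ of the restricted map $|\Delta(\tau_1 \times \dots \times \tau_m)| \to |\Delta\tau_1| \times \dots \times |\Delta\tau_m|$. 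Writing a point of the domain as a probability vector $(\lambda_i)$ supported on $\tau_1 \times \dots \times \tau_m$ and computing $\xi_k$ in coordinates as in Lemma~\ref{lem:squash-carriers}, the equation $\xi(\lambda) = y$ unwinds to the marginal constraints $\sum_{i \,:\, i_k = j} \lambda_i = (y_k)_j$ for all $k$ and all $j \in \tau_k$. Hence $\xi_P^{-1}(y)$ is exactly the polytope of couplings on $\tau_1 \times \dots \times \tau_m$ with marginals $y_1, \dots, y_m$: an affine slice of a simplex, hence convex, and nonempty because the product vector $\lambda_i = (y_1)_{i_1} \cdots (y_m)_{i_m}$ satisfies the constraints. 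A nonempty convex subset of a Euclidean space is contractible and locally contractible, so Theorem~\ref{thm:smale-v2} applies and yields the homotopy equivalence when $P$ is finite.

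To pass to arbitrary $P$, I would note that every finite prod-subcomplex $Q \leq P$ satisfies $\xi(|\simp(Q)|) = \bigcup_{\sigma \in Q} \xi(|\simp(\sigma)|) = \bigcup_{\sigma \in Q}|\sigma| = |Q|$ by Proposition~\ref{prop:squash-sigma}, so $\xi_P$ restricts to $\xi_Q : |\simp(Q)| \to |Q|$, which is a homotopy equivalence by the finite case. A finite subcomplex of $|\simp(P)|$ involves finitely many simplices of $\simp(P)$, each contained in $\sigma_1 \times \dots \times \sigma_m$ for some prod-simplex of $P$, and a finite subcomplex of $|P|$ involves finitely many prod-simplices of $P$; the down-set of $P$ generated by this finite list of prod-simplices is a finite prod-subcomplex $Q$ containing both subcomplexes after realization. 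Proposition~\ref{prop:finite-to-infinite} then finishes the proof.

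I expect the main obstacle to be the fiber identification: recognizing, via the carrier computation behind Proposition~\ref{prop:squash-preimage}, that the whole fiber over $y$ is confined to the single cell $|\simp(\tau)|$, so that it collapses to a concrete convex multi-marginal transportation polytope rather than some complicated union of faces. Once that is in hand, convexity disposes of contractibility and local contractibility, and the remaining steps (continuity, surjectivity, and the CW bookkeeping for the finite-to-infinite passage) are routine.
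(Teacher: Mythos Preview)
Your proof is correct and follows the same strategy as the paper: analyze the fibers of $\xi_P$ to apply Smale's theorem in the finite case, then invoke Proposition~\ref{prop:finite-to-infinite}. The only cosmetic differences are that the paper argues fiber convexity more abstractly (writing $\xi_P^{-1}(y) = \xi^{-1}(y) \cap |\simp(\sigma)|$ as an affine slice of a simplex, with nonemptiness coming from Proposition~\ref{prop:squash-sigma}) rather than identifying the transportation polytope explicitly, and indexes its finite subcomplexes by finite subsets $J_k \subseteq I_k$ rather than by down-sets generated by finitely many prod-simplices.
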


\begin{proof}
    We consider the point fibers of $\xi_P$. Let $y \in |P|$ and take $\sigma \in P$ such that $y \in |\sigma|$. Proposition~\ref{prop:squash-preimage} implies:
    \[
    \xi_P^{-1}(y) = \xi^{-1}(y) \cap |\simp(\sigma)|
    \]
    This is convex because $\xi$ is linear and $|\simp(\sigma)|$ convex; and nonempty by Proposition~\ref{prop:squash-sigma}. Thus $\xi_P^{-1}(y)$ is contractible and locally contractible.
    This proves the result when $I_1, \dots, I_m$ are finite, by the Smale mapping theorem~\ref{thm:smale-v2}.
    
    In general, $\xi_P$ restricts to homotopy equivalences
    \[
    \xymatrix{
    |\simp(P(J_1, \dots, J_m))|
        \ar[r]
    &
    |P(J_1, \dots, J_m)|
    }
    \]
    where
    \[
    P(J_1, \dots, J_m)
    =
    P \cap (\Delta J_1 \times \dots \times \Delta J_m)
    \]
    whenever we have finite subsets $J_1 \subseteq I_1$, \dots, $J_m \subseteq I_m$.
    Let $F \subseteq |\simp(P)|$ and $G \subseteq |P|$ be finite subcomplexes. Each cell, and therefore $F$ and $G$ themselves, involves only finitely many elements of the sets $I_1, \dots, I_m$. Thus we have finite subsets $J_1, \dots, J_m$ such that
    \[
    F \subseteq |\simp(P(J_1, \dots, J_m))|
    \quad \text{and} \quad
    G \subseteq |P(J_1, \dots, J_m)|.
    \]
    The conditions of Proposition~\ref{prop:finite-to-infinite} are satisfied, so $\xi_P$ is a homotopy equivalence.
\end{proof}

\subsection{Functoriality and naturality}
\label{subsec:simp-squash-fun-natural}

The constructions of section~\ref{subsec:simplexification} are categorically well-behaved: simplexification is a functor and the squashing map is a natural transformation.

\begin{proposition}[simplexification is functorial]
\label{prop:simp-functorial}
Let $P$ and $Q$ be prod-complexes on $I_1, \dots, I_m$ and $J_1, \dots, J_m$ and let $f = (f_1, \dots, f_m)$
be a prod-map $P \to Q$. Then the product map
\[
\xymatrix{
I_1 \times \dots \times I_m
    \ar[rr]^-{f_1 \times \dots \times f_m}
&&
J_1 \times \dots \times J_m
}
\]
induces a simplicial map
$
\simp(f) : \simp(P) \to \simp(Q)
$.
This makes $\simp$ a functor from the category of prod-complexes and prod-maps (of fixed order~$m$) to the category of simplicial complexes and simplicial maps.
\end{proposition}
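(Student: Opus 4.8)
The plan is to verify three things: that $\simp(f)$ is well-defined (i.e. the product map $f_1 \times \dots \times f_m$ is a simplicial map from $\simp(P)$ to $\simp(Q)$), that $\simp$ respects composition, and that $\simp$ respects identities. The last two are essentially formal once the first is established, since $\simp(f)$ is by construction the product map $f_1 \times \dots \times f_m$ regarded as a function on vertex sets, and $(g_1 \times \dots \times g_m) \circ (f_1 \times \dots \times f_m) = (g_1 f_1) \times \dots \times (g_m f_m)$ as functions $I_1 \times \dots \times I_m \to K_1 \times \dots \times K_m$, with the identity going to the identity. So I will spend most of the effort on well-definedness.

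First I would unwind the characterization of simplices in Definition~\ref{def:simplexification}: a nonempty finite $\rho \subseteq I_1 \times \dots \times I_m$ lies in $\simp(P)$ iff there exists $(\sigma_1, \dots, \sigma_m) \in P$ with $\rho \subseteq \sigma_1 \times \dots \times \sigma_m$. So let $\rho \in \simp(P)$ and pick such a prod-simplex $(\sigma_1, \dots, \sigma_m) \in P$. I need to show $(f_1 \times \dots \times f_m)(\rho) \in \simp(Q)$, i.e. that there is a prod-simplex of $Q$ whose vertex product contains this image. The natural candidate is $(f_1(\sigma_1), \dots, f_m(\sigma_m))$, which lies in $Q$ precisely because $f$ is a prod-map $P \to Q$. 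It then remains to check the containment
\[
(f_1 \times \dots \times f_m)(\rho)
\subseteq
f_1(\sigma_1) \times \dots \times f_m(\sigma_m).
\]
This is immediate from the set-theoretic fact that for a product of functions, the image of a subset of a product of sets is contained in the product of the images of its coordinate projections: if $(i_1, \dots, i_m) \in \rho$, then $i_k \in \pi_k(\rho) \subseteq \sigma_k$, hence $f_k(i_k) \in f_k(\sigma_k)$, so $(f_1(i_1), \dots, f_m(i_m)) \in f_1(\sigma_1) \times \dots \times f_m(\sigma_m)$. This shows $(f_1 \times \dots \times f_m)(\rho) \in \simp(Q)$, so $f_1 \times \dots \times f_m$ is a simplicial map, and $\simp(f)$ is well-defined.

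I do not expect a serious obstacle here; the statement is genuinely routine bookkeeping. The only point requiring a moment's care is keeping the two different containments straight — the one defining membership in $\simp(P)$ (namely $\rho \subseteq \sigma_1 \times \dots \times \sigma_m$, which need not be equality) versus the one needed for membership in $\simp(Q)$ — and noticing that the image of a prod-simplex under $f$ being a prod-simplex of $Q$ is exactly the defining property of a prod-map, so nothing new is required. For the functoriality claims, I would simply remark that on vertex sets $\simp$ sends $f = (f_1, \dots, f_m)$ to $f_1 \times \dots \times f_m$, and that this assignment is compatible with composition and identities as noted above; hence $|\simp(g)| \circ |\simp(f)| = |\simp(gf)|$ and $|\simp(\id_P)| = \id_{|\simp(P)|}$ follow from the functoriality of geometric realization of simplicial complexes established in section~\ref{subsec:simp}.
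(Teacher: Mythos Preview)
Your proposal is correct and follows essentially the same route as the paper: pick $\rho \in \simp(P)$, choose $(\sigma_1,\dots,\sigma_m)\in P$ with $\rho \subseteq \sigma_1\times\dots\times\sigma_m$, and observe that $(f_1\times\dots\times f_m)(\rho)\subseteq f_1(\sigma_1)\times\dots\times f_m(\sigma_m)$ where $(f_1(\sigma_1),\dots,f_m(\sigma_m))\in Q$ by the prod-map condition, then dismiss composition and identities as formal. The paper is a touch terser (it uses the equality $(f_1\times\dots\times f_m)(\sigma_1\times\dots\times\sigma_m)=f_1(\sigma_1)\times\dots\times f_m(\sigma_m)$ rather than your containment argument), but the substance is identical.
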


\begin{proof}
Each $\rho \in \simp(P)$ is a subset of some $\sigma_1 \times \dots \times \sigma_m$ where $(\sigma_1, \dots, \sigma_m) \in P$. Then
\[
(f_1 \times \dots \times f_m)
(\sigma_1 \times \dots \times \sigma_m)
=
f_1(\sigma_1) \times \dots \times f_m(\sigma_m)
\]
which is a simplex of $\simp(Q)$ because
\[
(f_1(\sigma_1), \dots, f_m(\sigma_m))
=
f(\sigma_1, \dots, \sigma_m)
\]
is a prod-simplex of~$Q$.
It is readily verified that $\simp(gf) = \simp(g) \circ \simp(f)$ and $\simp(1_P) = 1_{\simp(P)}$ so $\simp$ is a functor.
\end{proof}

\begin{proposition}
\label{prop:epi}
    The functor $\simp$ preserves epimorphisms: if a prod-map $f : P \to Q$ is surjective, then $\simp(f) : \simp(P) \to \simp(Q)$ is surjective.
\end{proposition}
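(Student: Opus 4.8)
The plan is to prove surjectivity directly at the level of simplices, by lifting an arbitrary simplex of $\simp(Q)$ through $f$ one coordinate at a time. The first thing I would do is record the form of the hypothesis I intend to use: exactly as for simplicial maps in section~\ref{subsec:simp} (via the carrier argument), a prod-map is surjective precisely when every prod-simplex $\tau$ of $Q$ equals $f(\sigma)$ for some prod-simplex $\sigma$ of $P$, equivalently when $|f|$ is onto. It is this ``onto prod-simplices'' formulation that I would work with.

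Next I would fix a simplex $\rho \in \simp(Q)$. By Definition~\ref{def:simplexification} there is a prod-simplex $(\tau_1, \dots, \tau_m) \in Q$ with $\rho \subseteq \tau_1 \times \dots \times \tau_m$. Using surjectivity of $f$, I would choose $(\sigma_1, \dots, \sigma_m) \in P$ with $f_k(\sigma_k) = \tau_k$ for every $k$, so that each $f_k$ maps $\sigma_k$ \emph{onto} $\tau_k$. Then I would lift $\rho$ elementwise: for each $(a_1, \dots, a_m) \in \rho$, with $a_k \in \tau_k$, pick a preimage $b_k \in \sigma_k$ with $f_k(b_k) = a_k$, and collect the resulting tuples $(b_1, \dots, b_m)$ into a set $\rho'$. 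By construction $\rho'$ is a nonempty finite subset of $\sigma_1 \times \dots \times \sigma_m$, hence a simplex of $\simp(P)$ by Definition~\ref{def:simplexification}, and $\simp(f)(\rho') = (f_1 \times \dots \times f_m)(\rho') = \rho$. Since $\rho$ was arbitrary, $\simp(f)$ is surjective.

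I do not expect a genuine obstacle here. The only point requiring care is that surjectivity of $f$ must be invoked in the strong sense of being onto prod-simplices rather than merely onto vertices: this is what guarantees a lift $(\sigma_1, \dots, \sigma_m)$ with $f_k(\sigma_k)$ equal to, and not merely contained in, $\tau_k$, which is precisely what makes the coordinatewise lifting of the elements of $\rho$ go through.
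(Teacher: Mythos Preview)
Your proof is correct and follows essentially the same route as the paper's: both invoke surjectivity at the level of prod-simplices to find $(\sigma_1,\dots,\sigma_m)\in P$ with $f_k(\sigma_k)=\tau_k$, then use that $f_1\times\dots\times f_m$ carries $\sigma_1\times\dots\times\sigma_m$ onto $\tau_1\times\dots\times\tau_m$. The paper phrases this last step more tersely---``$\simp(f)$ carries $\simp(\sigma)$ onto $\simp(\tau)$, so $\Delta(\simp(\tau))$ is in the image''---and then takes the union over $\tau\in Q$, whereas you spell out the elementwise lift of a single $\rho$.
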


\begin{proof}
    For $\tau \in Q$, take $\sigma \in P$ with $f(\sigma) = \tau$. Then $\simp(f)$ carries $\simp(\sigma)$ onto $\simp(\tau)$, so $\Delta(\simp(\tau))$ is contained in the image of $\simp(f)$. Now take the union over $\tau \in Q$.
\end{proof}

\begin{proposition}[squashing is natural]
    \label{prop:squash-natural}
    Let $P$ and $Q$ be prod-complexes on $I_1, \dots, I_m$ and $J_1, \dots, J_m$ and let
    $
    f = (f_1, \dots, f_m) : P \to Q
    $
    be a prod-map. Then the diagram of spaces
    \begin{equation}
    \label{eq:squash-natural}
    \xymatrix{
    |\simp(P)|
        \ar[rr]^{\simp(f)}
        \ar[d]_{\xi_P}
    &&
    |\simp(Q)|
        \ar[d]^{\xi_Q}
    \\
    |P|
        \ar[rr]^{f}
    &&
    |Q|
    }         
    \end{equation}
    commutes. In other words, the squashing map is a natural transformation.
\end{proposition}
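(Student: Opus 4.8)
The plan is to observe that every map in the square \eqref{eq:squash-natural} is the restriction of a linear map between free vector spaces, reduce the claim to one commutative square of such linear maps, and then verify that square on basis vectors.

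First I would unwind the definitions. Let $\pi_k^I : I_1 \times \dots \times I_m \to I_k$ and $\pi_k^J : J_1 \times \dots \times J_m \to J_k$ be the coordinate projections. By construction, $\xi_P$ and $\xi_Q$ are the restrictions of
\[
\xi^I = (\R_{\pi_1^I}, \dots, \R_{\pi_m^I}) : \R_{I_1 \times \dots \times I_m} \to \R_{I_1} \times \dots \times \R_{I_m}
\]
and of the analogous map $\xi^J$. By Proposition~\ref{prop:simp-functorial} the simplicial map $\simp(f)$ is induced by $f_1 \times \dots \times f_m$, so $|\simp(f)|$ is the restriction of $\R_{f_1 \times \dots \times f_m} : \R_{I_1 \times \dots \times I_m} \to \R_{J_1 \times \dots \times J_m}$; and $|f|$ is the restriction of $\R_{f_1} \times \dots \times \R_{f_m}$. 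Since $|\simp(P)|$, $|\simp(Q)|$, $|P|$, $|Q|$ sit as subspaces of the respective free vector spaces and all four maps are restrictions of linear maps, it suffices to check that the square of linear maps with top edge $\R_{f_1 \times \dots \times f_m}$, left edge $\xi^I$, right edge $\xi^J$, and bottom edge $\R_{f_1} \times \dots \times \R_{f_m}$ commutes.

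This reduces to a one-line computation: two linear maps agree as soon as they agree on a basis, so take the standard basis vector $\basis((i_1, \dots, i_m))$ for $(i_1, \dots, i_m) \in I_1 \times \dots \times I_m$. Along the left-then-bottom route it goes first to $(\basis(i_1), \dots, \basis(i_m))$, by the formula from the proof of Proposition~\ref{prop:squash-sigma}, and then coordinatewise to $(\basis(f_1(i_1)), \dots, \basis(f_m(i_m)))$. Along the top-then-right route it goes first to $\basis((f_1(i_1), \dots, f_m(i_m)))$ and then, by the same formula, to $(\basis(f_1(i_1)), \dots, \basis(f_m(i_m)))$. These coincide, so the linear square commutes, and restricting it to the geometric realizations gives \eqref{eq:squash-natural}. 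I expect the only mild obstacle to be the bookkeeping that identifies the four maps of \eqref{eq:squash-natural} with the restrictions named above; this is exactly the content of the defining diagram for $\xi_P$ together with functoriality of geometric realization for simplicial and for prod-maps. Granting that, naturality of $\xi$ as a transformation between the functors $P \mapsto |\simp(P)|$ and $P \mapsto |P|$ on prod-complexes of order $m$ follows at once.
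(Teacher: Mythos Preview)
Your argument is correct and matches the paper's proof essentially verbatim: both observe that \eqref{eq:squash-natural} is the restriction of a square of linear maps between free vector spaces and verify commutativity by chasing a basis element $\basis((i_1,\dots,i_m))$ to $(\basis(f_1(i_1)),\dots,\basis(f_m(i_m)))$ along either route.
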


\begin{proof}
The diagram is the restriction of a diagram of vector spaces:
\[
\xymatrix{
\R_{I_1 \times \dots \times I_m}
    \ar[d]_{\xi}
    \ar[rrr]^{\R_{f_1 \times \dots \times f_m}}
&&&
\R_{J_1 \times \dots \times J_m}
    \ar[d]^{\xi}
\\
\R_{I_1} \times \dots \times \R_{I_m}
    \ar[rrr]^{\R_{f_1} \times \dots \times \R_{f_m}}
&&&
\R_{J_1} \times \dots \times \R_{J_m}
}
\]
To see that this commutes, observe that a typical basis element $\basis((i_1,\dots,i_m))$ on the top left is mapped by either path to $(\basis(f_1(i_1)), \dots, \basis(f_m(i_m)))$ on the bottom right.
\end{proof}

\begin{corollary}[squashing respects Dowkerian quotients]
    \label{cor:squash-dowkerian}
    Let $P$ be a prod-complex on $I_1, \dots, I_m$ and let $\psi_k : P \to P/I_k$ denote its $k$-th Dowkerian quotient map.
    Then $\simp(\psi_k)$ is surjective and the diagram of spaces
    \[
    \xymatrix{
    |\simp(P)|
        \ar[rr]^-{|\simp(\psi_k)|}
        \ar[d]_{\xi_P}
    &&
    |\simp(P/I_k)|
        \ar[d]^{\xi_{P/I_k}}
    \\
    |P|
        \ar[rr]^{|\psi_k|}
    &&
    |P/I_k|
    }
    \]
    commutes.
\end{corollary}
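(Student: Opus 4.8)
The plan is to recognize the square as an instance of the naturality of squashing (Proposition~\ref{prop:squash-natural}), once the Dowkerian quotient map $\psi_k$ has been reinterpreted as a genuine prod-map in the sense of Remark~\ref{rem:dowkerian-*}. So the first move is to invoke that remark: identify the target $P/I_k$ with the prod-complex $(P/I_k)^*$ on $I_1, \dots, *_k, \dots, I_m$, under which $\psi_k$ becomes the prod-map $(\id_{I_1}, \dots, *_k, \dots, \id_{I_m}) : P \to (P/I_k)^*$ with $*_k$ the unique map $I_k \to *_k$. Its geometric realization, by the factor-wise definition of $|{-}|$ for prod-maps, is precisely the projection map $|\psi_k|$ of section~\ref{subsec:dowkerian-quotients}, and the canonical homeomorphism $|P/I_k| \cong |(P/I_k)^*|$ of that remark intertwines $|\psi_k|$ with $|{\rm this\ prod\text{-}map}|$.

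With this in hand, the surjectivity claim is immediate: $\psi_k$ is visibly surjective as a map of posets, so Proposition~\ref{prop:epi} gives that $\simp(\psi_k)$ is surjective. For the commuting square, I would apply Proposition~\ref{prop:squash-natural} directly to the prod-map $\psi_k : P \to (P/I_k)^*$, producing the naturality diagram with $\xi_P$ on the left and $\xi_{(P/I_k)^*}$ on the right and the prod-map and its realization along the horizontal edges. Transporting the right-hand column back along the identification of Remark~\ref{rem:dowkerian-*} then yields exactly the square in the statement, since under that identification $\simp\big((P/I_k)^*\big)$ corresponds to $\simp(P/I_k)$, the realization $|\simp(\psi_k)|$ to the map of the same name, and $\xi_{(P/I_k)^*}$ to $\xi_{P/I_k}$.

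The main (and essentially the only) obstacle is the bookkeeping needed to justify that last sentence: one must check that inserting a singleton factor $*_k$ is compatible with both $\simp$ and $\xi$. This holds because deleting a singleton factor from a product of sets is a bijection $I_1 \times \dots \times *_k \times \dots \times I_m \to I_1 \times \dots \times \widehat{I_k} \times \dots \times I_m$, and both simplexification (a union of complete simplices on products of simplices) and the squashing map (assembled factor-wise from the projections $\R_{\pi_k}$) are defined in a way that is insensitive to the presence of such a singleton coordinate. Once this compatibility is recorded, the corollary follows with no further computation from Propositions~\ref{prop:epi} and~\ref{prop:squash-natural}.
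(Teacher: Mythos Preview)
Your proposal is correct and follows exactly the paper's approach: the paper's proof is the single line ``Apply Propositions~\ref{prop:epi} and~\ref{prop:squash-natural} to $\psi_k$ interpreted according to Remark~\ref{rem:dowkerian-*},'' and you have simply unpacked that line, including the bookkeeping about the singleton factor that the paper leaves implicit. There is nothing to add.
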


\begin{proof}
    Apply Propositions \ref{prop:epi} and~\ref{prop:squash-natural} to $\psi_k$ interpreted according to Remark~\ref{rem:dowkerian-*}.
\end{proof}

\subsection{Multiway Dowker for simplicial complexes}
\label{subsec:multiway-simplicial}

The simplicial version of our main theorem relates a `{cuboid complex}' to its associated {multiway Dowker complexes}. We define these terms now.
Let $R \subseteq I_1 \times \dots \times I_m$ be a multiway relation, and let
\[
    R_{i_k}
    =
    \{
    (i_1, \dots, \widehat{i_k}, \dots, i_m)
    \mid
    (i_1, \dots, i_k, \dots, i_m) \in R
    \}
    \subseteq
    I_1 \times \dots \times \widehat{I_k}
    \times \dots \times I_m
\]
denote the `slice' of~$R$ at $i_k \in I_k$.

\begin{definition}[cuboid complex]
\label{def:cuboid}
    The cuboid complex of~$R$ is the simplicial complex on $I_1 \times \dots \times I_m$ defined
    \begin{align*}
    \cuboid(R)
    &=
    \{
    \rho
    \mid
    \rho_1 \times \dots \times \rho_m \subseteq R \}
    \\
    &=
    \{
    \rho
    \mid
    \text{$\exists\; \sigma_1, \dots, \sigma_m$ such that $\rho \subseteq \sigma_1 \times \dots \times \sigma_m \subseteq R$}
    \}
    \\
    &=
    \simp(\dowker(R))
    \end{align*}
    where $\rho_k = \proj_{I_k}(\rho) \subseteq I_k$.
    When $m=2$, this is the rectangle complex of Brun and Salbu~\cite{Brun_Salbu_2022}. We sometimes write $\rect(R)$ instead of $\cuboid(R)$ as a reminder that $m=2$.
\end{definition}

\begin{definition}[multiway Dowker complex]
\label{def:multiway-Dowker-complex}
    The $k$-th multiway Dowker complex of~$R$ is the simplicial complex on $I_1 \times \dots \times \widehat{I_k} \times \dots \times I_m$ defined as follows:
    \[
    \Dowker_{/I_k}(R)
    =
    \bigcup_{i_k \in I_k} \cuboid(R_{i_k})
    =
    \bigcup_{i_k \in I_k} \simp(\dowker(R_{i_k}))
    =
    \simp(\dowker(R)/I_k)
    \]
\end{definition}

\begin{definition}[simplicial Dowkerian quotient]
\label{def:simplicial-dowkerian-quotient}
    The $k$-th simplicial Dowkerian quotient map is the simplicial map $\varphi_k$ defined by the diagram
    \begin{equation}
    \label{eq:cuboid-dowker-square}
        \xymatrix{
        \cuboid(R)
            \ar[rr]^-{\varphi_k}
            \ar@{=}[d]
        &&
        \Dowker_{/I_k}(R)
            \ar@{=}[d]
        \\
        \simp(\dowker(R))
            \ar[rr]^-{\simp(\psi_k)}
        &&
        \simp(\dowker(R)/I_k)    
        }
    \end{equation}
    and induced by the projection map
    \[
    \xymatrix{
    I_1 \times \dots \times I_m
        \ar[rr]^-{\proj_{/I_k}}
    &&   
    I_1 \times \dots \times \widehat{I_k}
    \times \dots \times I_m
    }
    \]
    on vertices.
    In fact $\Dowker_{/I_k}(R)$ is the smallest simplicial complex to which $\proj_{/I_k}$ defines a simplicial map from $\cuboid(R)$, since $\varphi_k = \simp(\psi_k)$ is surjective (Corollary~\ref{cor:squash-dowkerian}).
\end{definition}

\begin{remark}
    In the classical case of a relation $R \subseteq I \times J$, we have
    \[
    \Dowker_I(R) = \Dowker_{/J}(R) 
    \quad \text{and} \quad
    \Dowker_J(R) = \Dowker_{/I}(R)
    \]
    so we caution the reader to be careful with the subscripts.
    In the case of a ternary relation between $I, J, K$ one may wish to write
    \[
    \Dowker_{JK} = \Dowker_{/I},
    \quad
    \Dowker_{IK} = \Dowker_{/J},
    \quad
    \Dowker_{IJ} = \Dowker_{/K}
    \]
    as we have done in the caption to Figure~\ref{fig:hexagon}.
\end{remark}

The simplicial version of our main theorem now follows quickly.

\begin{theorem}[simplicial multiway Dowker]
\label{thm:multiway-simplicial}
    Let $R \subseteq I_1 \times \dots \times I_k$ be a multiway relation. Then
    \[
    \xymatrix{
    |\cuboid(R)|
    \ar[r]^-{|\varphi_k|}
    &
    |\Dowker_{/I_k}(R)|    
    }
    \]
    is a homotopy equivalence for all~$k$.
\end{theorem}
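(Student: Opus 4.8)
The plan is to bootstrap from the prod-complex version of the theorem (Theorem~\ref{thm:multiway-main-theorem}) by transporting the homotopy equivalence across the squashing maps. The crucial point, already recorded in Corollary~\ref{cor:squash-dowkerian}, is that the simplicial Dowkerian quotient map $\varphi_k$ is literally the simplexification $\simp(\psi_k)$ of the prod-complex Dowkerian quotient map $\psi_k : \dowker(R) \to \dowker(R)/I_k$, and that squashing is natural with respect to it. So essentially no new geometry is required; everything follows by pasting together results already proved.

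Concretely, I would apply Corollary~\ref{cor:squash-dowkerian} to the prod-complex $P = \dowker(R)$. Using the definitional identities $\cuboid(R) = \simp(\dowker(R))$ and $\Dowker_{/I_k}(R) = \simp(\dowker(R)/I_k)$ from Definitions~\ref{def:cuboid} and~\ref{def:multiway-Dowker-complex}, this produces the commutative square
\[
\xymatrix{
|\cuboid(R)| \ar[rr]^-{|\varphi_k|} \ar[d]_-{\xi_{\dowker(R)}} && |\Dowker_{/I_k}(R)| \ar[d]^-{\xi_{\dowker(R)/I_k}} \\
|\dowker(R)| \ar[rr]^-{|\psi_k|} && |\dowker(R)/I_k|
}
\]
in which the two vertical squashing maps are homotopy equivalences by Theorem~\ref{thm:squashing-HE} and the bottom horizontal map $|\psi_k|$ is a homotopy equivalence by Theorem~\ref{thm:multiway-main-theorem}. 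Hence the composite $\xi_{\dowker(R)/I_k} \circ |\varphi_k| = |\psi_k| \circ \xi_{\dowker(R)}$ is a homotopy equivalence, being a composite of such; and since $\xi_{\dowker(R)/I_k}$ is itself a homotopy equivalence, the two-out-of-three property (a composite $g \circ f$ and the outer map $g$ being homotopy equivalences forces $f$ to be one) shows that $|\varphi_k|$ is a homotopy equivalence. Explicitly, post-composing $|\psi_k| \circ \xi_{\dowker(R)}$ with a homotopy inverse of $\xi_{\dowker(R)/I_k}$ yields a map homotopic to $|\varphi_k|$ that is manifestly a homotopy equivalence.

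There is no serious obstacle here: all the substance lies upstream, in the squashing theorem and the prod-complex multiway theorem, and what remains is bookkeeping — checking that the square furnished by Corollary~\ref{cor:squash-dowkerian} really is the square displayed above once the identities $\varphi_k = \simp(\psi_k)$, $\cuboid = \simp \circ \dowker$, and $\Dowker_{/I_k} = \simp\,(\dowker(-)/I_k)$ are unwound, and that its orientation is the one the two-out-of-three argument wants. I would additionally remark that pasting this argument onto the naturality squares of Corollary~\ref{cor:squash-dowkerian} and Remark~\ref{rem:multiway-naturality} immediately upgrades it to a functorial statement, giving a simplicial analogue of the hub diagram~\eqref{eq:hub-diagram} that is natural in $R$.
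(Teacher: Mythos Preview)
Your proposal is correct and matches the paper's own proof essentially line for line: apply Corollary~\ref{cor:squash-dowkerian} with $P = \dowker(R)$, identify the resulting square with the one you display via Definitions~\ref{def:cuboid}--\ref{def:simplicial-dowkerian-quotient}, and conclude by two-out-of-three since the other three sides are homotopy equivalences by Theorems~\ref{thm:squashing-HE} and~\ref{thm:multiway-main-theorem}. The paper states this a bit more tersely (``the lower three sides are homotopy equivalences, so $|\varphi_k|$ is''), and your closing remark about functoriality anticipates exactly what the paper does next in Proposition~\ref{prop:multiway-simplicial-functoriality} and Theorem~\ref{thm:multiway-simplicial-final}.
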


\begin{proof}
    Corollary~\ref{cor:squash-dowkerian} applied to $P = \dowker(R)$ yields a commutative diagram of spaces
    \[
    \xymatrix{
    |\simp(\dowker(R))|
        \ar[rr]^-{|\simp(\psi_k)|}
        \ar[d]_{\xi}
    &&
    |\simp(\dowker(R)/I_k)|
        \ar[d]^{\xi}
    \\
    |\dowker(R)|
        \ar[rr]^{|\psi_k|}
    &&
    |\dowker(R)/I_k|
    }
    \]
    where the lower three sides are homotopy equivalences.
    Thus $|\simp(\psi_k)| = |\varphi_k|$ is a homotopy equivalence.
\end{proof}

\begin{corollary}
    \label{cor:multiway-square}
    We have a commutative square of homotopy equivalences
    \[
    \xymatrix{
    |\cuboid(R)|
        \ar[rr]^-{|\varphi_k|}
        \ar[d]_{\xi}
    &&
    |\Dowker_{/I_k}(R)|
        \ar[d]^{\xi}
    \\
    |\dowker(R)|
        \ar[rr]^-{|\psi_k|}
    &&
    |\dowker(R)/I_k|
    }
    \]
    combining \eqref{eq:cuboid-dowker-square} with the diagram in the proof of Theorem~\ref{thm:multiway-simplicial}.
    \qed
\end{corollary}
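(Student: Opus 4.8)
The plan is to assemble the square from facts already in hand, since each of its four edges has been analysed separately. First I would unwind the identifications built into the statement. By Definition~\ref{def:simplicial-dowkerian-quotient} and the defining square~\eqref{eq:cuboid-dowker-square}, the top edge $|\varphi_k|$ \emph{is} the geometric realization $|\simp(\psi_k)|$ of the simplexified Dowkerian quotient map of the prod-complex $P = \dowker(R)$, under the identifications $\cuboid(R) = \simp(\dowker(R))$ and $\Dowker_{/I_k}(R) = \simp(\dowker(R)/I_k)$. The bottom edge is the geometric Dowkerian quotient map $|\psi_k|$ of $P$, and the two vertical edges are the squashing maps $\xi_{P} \colon |\simp(P)| \to |P|$ and $\xi_{P/I_k} \colon |\simp(P/I_k)| \to |P/I_k|$ (the paper's convention of abbreviating $\xi_P$ to $\xi$ makes it worth flagging that these are the squashing maps of two \emph{different} prod-complexes).

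Second, commutativity of the square is exactly the content of Corollary~\ref{cor:squash-dowkerian} applied to $P = \dowker(R)$: that corollary asserts that the square with edges $|\simp(\psi_k)|$, $\xi_P$, $\xi_{P/I_k}$, $|\psi_k|$ commutes (and, as a byproduct, that $\simp(\psi_k)$ is surjective, via Proposition~\ref{prop:epi}). Rewriting the top edge through~\eqref{eq:cuboid-dowker-square} turns this into the square displayed in the corollary; this is precisely the diagram appearing in the proof of Theorem~\ref{thm:multiway-simplicial}, with its top row relabelled.

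Third, I would verify that all four edges are homotopy equivalences. The two vertical maps are squashing maps, hence homotopy equivalences by Theorem~\ref{thm:squashing-HE} (applied once to $\dowker(R)$ and once to $\dowker(R)/I_k$). The bottom map $|\psi_k|$ is a homotopy equivalence by the multiway Dowker theorem~\ref{thm:multiway-main-theorem}. For the top map one may either cite Theorem~\ref{thm:multiway-simplicial} directly, or — keeping the corollary self-contained — deduce it from the now-commuting square by two-out-of-three for homotopy equivalences: from $\xi_{P/I_k} \circ |\varphi_k| = |\psi_k| \circ \xi_P$ one gets $|\varphi_k| \simeq \xi_{P/I_k}' \circ |\psi_k| \circ \xi_P$, a composite of homotopy equivalences, where $\xi_{P/I_k}'$ is a homotopy inverse of $\xi_{P/I_k}$.

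There is no genuine obstacle: the substance lives in Theorems~\ref{thm:squashing-HE} and~\ref{thm:multiway-main-theorem} and in Corollary~\ref{cor:squash-dowkerian}, all of which are already proved. The only thing requiring care is bookkeeping — correctly matching the two vertical arrows with the squashing maps of the two distinct prod-complexes $\dowker(R)$ and $\dowker(R)/I_k$, and confirming that the surjectivity implicit in~\eqref{eq:cuboid-dowker-square} is supplied by Corollary~\ref{cor:squash-dowkerian} rather than needing a fresh argument. Hence the statement follows by combining~\eqref{eq:cuboid-dowker-square} with the commutative diagram from the proof of Theorem~\ref{thm:multiway-simplicial}, exactly as claimed.
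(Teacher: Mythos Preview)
Your proposal is correct and matches the paper's approach exactly: the paper gives no explicit proof (just a \qed), treating the corollary as an immediate relabelling of the diagram already built in the proof of Theorem~\ref{thm:multiway-simplicial} via Corollary~\ref{cor:squash-dowkerian}, with the top row rewritten through~\eqref{eq:cuboid-dowker-square}. Your unpacking of the identifications, the source of commutativity, and the reasons each edge is a homotopy equivalence are all the intended ones.
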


\begin{proposition}[naturality of $\varphi_k$]
\label{prop:multiway-simplicial-functoriality}
Let $R \subseteq I_1 \times \dots \times I_m$ and $S \subseteq J_1 \times \dots \times J_m$. Let
\[
f = (f_1, \dots, f_m)
\quad
\text{where $f_k : I_k \to J_k$}
\]
satisfy $(f_1 \times \dots \times f_m)(R) \subseteq S$. Then we have a commutative diagram
\[
\xymatrix{
    \cuboid(R)
        \ar[rr]^-{\varphi_k}
        \ar[d]_{f_1 \times \dots \times f_m}
    &&
    \Dowker_{/I_k}(R)
        \ar[d]^{f_1 \times \dots \times \widehat{f_k} \times \dots \times f_m}
    \\
    \cuboid(S)
    \ar[rr]^-{\varphi_k}
    &&
    \Dowker_{/J_k}(S)
}
\]
of simplicial maps.
\end{proposition}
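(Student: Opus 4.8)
The plan is to derive this from the naturality results already established for prod-complexes and for simplexification. The key observation is that the diagram in question is obtained from a diagram of prod-complexes by applying the functor $\simp$, together with the identifications $\cuboid(R) = \simp(\dowker(R))$ and $\Dowker_{/I_k}(R) = \simp(\dowker(R)/I_k)$ from Definitions~\ref{def:cuboid} and~\ref{def:multiway-Dowker-complex}.

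First I would check that $f = (f_1, \dots, f_m)$, satisfying the relation-compatibility condition $(f_1 \times \dots \times f_m)(R) \subseteq S$, induces a prod-map $\dowker(R) \to \dowker(S)$: if $(\sigma_1, \dots, \sigma_m) \in \dowker(R)$, meaning $\sigma_1 \times \dots \times \sigma_m \subseteq R$, then $(f_1(\sigma_1), \dots, f_m(\sigma_m))$ has vertex set $f_1(\sigma_1) \times \dots \times f_m(\sigma_m) = (f_1 \times \dots \times f_m)(\sigma_1 \times \dots \times \sigma_m) \subseteq S$, so it lies in $\dowker(S)$. This is precisely the content of Remark~\ref{rem:multiway-naturality}. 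Next, by Remark~\ref{rem:dowkerian-*} the Dowkerian quotient map $\psi_k$ is itself a prod-map (after the singleton-set reinterpretation $P/I_k \cong (P/I_k)^*$), and the naturality square~\eqref{eq:dowkerian-naturality} gives a commuting square of prod-maps
\[
\xymatrix{
\dowker(R)
    \ar[rr]^-{\psi_k}
    \ar[d]_{f}
&&
\dowker(R)/I_k
    \ar[d]^{f_{/k}}
\\
\dowker(S)
    \ar[rr]^-{\psi_k}
&&
\dowker(S)/J_k
}
\]
where $f_{/k} = (f_1, \dots, \widehat{f_k}, \dots, f_m)$.

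Then I would apply the functor $\simp$ (Proposition~\ref{prop:simp-functorial}) to this square. Functoriality gives a commuting square of simplicial maps, and it remains only to identify the four terms and the four maps with those in the statement. For the corners: $\simp(\dowker(R)) = \cuboid(R)$ and $\simp(\dowker(R)/I_k) = \Dowker_{/I_k}(R)$ by definition, similarly for $S$. For the maps: $\simp(\psi_k) = \varphi_k$ by the defining diagram~\eqref{eq:cuboid-dowker-square}; $\simp(f)$ is the simplicial map induced by $f_1 \times \dots \times f_m$ by Proposition~\ref{prop:simp-functorial}; and $\simp(f_{/k})$ is induced by $f_1 \times \dots \times \widehat{f_k} \times \dots \times f_m$, again by Proposition~\ref{prop:simp-functorial} applied to the order-$(m-1)$ prod-map $f_{/k}$. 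Substituting these identifications into the $\simp$-image of the square yields exactly the asserted commutative diagram.

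The only genuine subtlety — and the place I would be most careful — is the bookkeeping around Remark~\ref{rem:dowkerian-*}: one must consistently treat $\psi_k$ as a prod-map into the reinterpreted complex $(P/I_k)^*$ over $I_1, \dots, *_k, \dots, I_m$ rather than into $P/I_k$ over $I_1, \dots, \widehat{I_k}, \dots, I_m$, so that all the prod-maps in sight have the same order $m$ and can be composed and simplexified in a single category. Once that reinterpretation is fixed, the canonical isomorphisms $|P| \cong |P^*|$ and $\simp(P) \cong \simp(P^*)$ (the latter because $\Delta(I_1 \times \dots \times *_k \times \dots \times I_m) \cong \Delta(I_1 \times \dots \times \widehat{I_k} \times \dots \times I_m)$) are harmless, and the argument goes through formally. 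This mirrors the proof of Corollary~\ref{cor:squash-dowkerian}, which is the same manoeuvre applied to the squashing natural transformation rather than to an external prod-map $f$; indeed one could alternatively phrase the present proof as an instance of that corollary's method.
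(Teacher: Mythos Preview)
Your proof is correct but takes a different route from the paper's. The paper argues directly at the level of simplicial complexes: it checks that the left vertical map is simplicial from the hypothesis $(f_1 \times \dots \times f_m)(R) \subseteq S$, that the right vertical map is simplicial by writing $\Dowker_{/I_k}(R) = \bigcup_{i_k} \cuboid(R_{i_k})$ and observing that $f_1 \times \dots \times \widehat{f_k} \times \dots \times f_m$ carries each $\cuboid(R_{i_k})$ into $\cuboid(S_{f_k(i_k)})$, and then verifies commutativity on vertices. Your argument instead lifts everything to the prod-complex level, invokes the naturality square~\eqref{eq:dowkerian-naturality}, and then pushes down via the functor $\simp$. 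Your approach is more structural and makes clear that the proposition is a formal consequence of machinery already in place; the paper's approach is more elementary and sidesteps the order-$m$ versus order-$(m{-}1)$ bookkeeping that you (rightly) flag as the one place requiring care.
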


\begin{proof}
    The left vertical map is simplicial because of the condition on $f_1 \times \dots \times f_m$. The right vertical map is simplicial because it is the union of the simplicial maps
    \[
    \xymatrix{
    \cuboid(R_{i_k})
        \ar[rrr]^-{f_1 \times \dots \times \widehat{f_k} \times \dots \times f_m}
    &&&
    \cuboid(S_{f_k(i_k)})
    }
    \]
    over $i_k \in I_k$.
    The diagram commutes because the vertices of $\cuboid(R)$ are mapped
    \[
    \xymatrix{
    (i_1, \dots, i_m)
        \ar@{|->}[r]
    &
    (f_1(i_1), \dots, \widehat{f_k(i_k)}, \dots f_m(i_m))
    }
    \]
    by either path through the diagram.
\end{proof}

We combine Theorem \ref{thm:multiway-simplicial} and Proposition~\ref{prop:multiway-simplicial-functoriality} into a single statement.

\begin{theorem}
\label{thm:multiway-simplicial-final}
Let $R \subseteq I_1 \times \dots \times I_m$ be a multiway relation. Then we have a functorially defined diagram
\[
    \xymatrix@C=1.8pc{
    &&& \cuboid(R)
        \ar@{>>}[llld]_-{\varphi_1}
        \ar@{>>}[ld]^(0.4){\varphi_2}
        \ar@{>>}[rrrd]^-{\varphi_m}
    \\
    \Dowker_{/I_1}(R)
    &&
    \Dowker_{/I_2}(R)
    &&
    \dots
    &&
    \Dowker_{/I_m}(R)
    }
    \]
of simplicial maps whose geometric realizations are homotopy equivalences.
\qed
\end{theorem}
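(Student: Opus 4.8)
The plan is to assemble the statement from two ingredients already established: the homotopy-equivalence assertion is exactly Theorem~\ref{thm:multiway-simplicial}, and the naturality is exactly Proposition~\ref{prop:multiway-simplicial-functoriality}; what remains is to package these into a single functoriality statement and to record that the relevant source and target are genuinely categories.

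First I would pin down the category involved: objects are multiway relations $R \subseteq I_1 \times \dots \times I_m$, and a morphism $R \to S$ (where $S \subseteq J_1 \times \dots \times J_m$) is a tuple $f = (f_1, \dots, f_m)$ with $f_k : I_k \to J_k$ satisfying $(f_1 \times \dots \times f_m)(R) \subseteq S$. I would then observe that $R \mapsto \cuboid(R)$ and $R \mapsto \Dowker_{/I_k}(R)$ are functors to simplicial complexes and simplicial maps. This is immediate from the identifications $\cuboid(R) = \simp(\dowker(R))$ and $\Dowker_{/I_k}(R) = \simp(\dowker(R)/I_k)$ of Definitions~\ref{def:cuboid} and~\ref{def:multiway-Dowker-complex}: the relational product sends $f$ to the prod-map $(f_1, \dots, f_m)$ because $\sigma_1 \times \dots \times \sigma_m \subseteq R$ forces $f_1(\sigma_1) \times \dots \times f_m(\sigma_m) \subseteq S$; the Dowkerian quotient $P \mapsto P/I_k$ is functorial by the naturality square~\eqref{eq:dowkerian-naturality}; and $\simp$ is a functor by Proposition~\ref{prop:simp-functorial}. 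Under these identifications $\varphi_k = \simp(\psi_k)$ is a natural transformation $\cuboid(\,\cdot\,) \Rightarrow \Dowker_{/I_k}(\,\cdot\,)$ whose naturality squares are precisely those of Proposition~\ref{prop:multiway-simplicial-functoriality}, and its components are surjective by Corollary~\ref{cor:squash-dowkerian}, justifying the double-headed arrows.

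With this in place the proof is a short assembly. For each fixed $k$, Theorem~\ref{thm:multiway-simplicial} gives that $|\varphi_k|$ is a homotopy equivalence and Proposition~\ref{prop:multiway-simplicial-functoriality} gives the commuting square relating $R$ to $S$. To see that the whole star-shaped diagram varies functorially, I would glue the $m$ naturality squares along their common edge $\cuboid(R) \xrightarrow{f_1 \times \dots \times f_m} \cuboid(S)$ --- the `pages of a book along its spine' picture already used in Remark~\ref{rem:multiway-naturality} for the prod-complex version --- exhibiting the diagram as a functor from the category of multiway relations above to the appropriate diagram category of simplicial complexes. I expect no serious obstacle: all the genuinely nontrivial work lives in Theorem~\ref{thm:multiway-simplicial} (whose engine is Smale's mapping theorem, applied via the cellular Dowker lemma and the squashing homotopy equivalence) and in the elementary bookkeeping of Propositions~\ref{prop:simp-functorial} and~\ref{prop:multiway-simplicial-functoriality}. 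The only point demanding a moment's care is expository --- naming the target diagram category and checking that, when more than one $k$ is in play, the several commuting squares of the various $\varphi_k$ are mutually compatible --- and this is routine because every map in sight is induced on vertices by an honest set map, so composites agree as soon as they agree on vertices.
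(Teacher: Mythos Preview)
Your proposal is correct and matches the paper's approach exactly: the paper presents this theorem as a direct combination of Theorem~\ref{thm:multiway-simplicial} and Proposition~\ref{prop:multiway-simplicial-functoriality} (it is stated with a \qed and no separate proof). Your write-up is in fact more explicit than the paper's, spelling out the source category, the functoriality of $\cuboid$ and $\Dowker_{/I_k}$ via $\simp$, and the `pages along a spine' gluing that the paper invokes only for the prod-complex version in Remark~\ref{rem:multiway-naturality}.
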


\section{Homotopy types of ternary relations}
\label{sec:ternary}

In this closing section, we describe various complexes (section~\ref{subsec:ternary-complexes}) and maps of complexes (section~\ref{subsec:ternary-pairs}) that can be derived, Dowker-style, from a ternary relation.

\subsection{Ternary Dowker complexes}
\label{subsec:ternary-complexes}

Let $R \subseteq I \times J \times K$ be a ternary relation. There are four homotopy types immediately associated to~$R$, namely the homotopy types of its Dowker relational product and its iterated Dowkerian quotients:

\begin{itemize}
    \item
    $|\dowker(R)| \simeq
    |\dowker(R)/I| \simeq
    |\dowker(R)/J| \simeq
    |\dowker(R)/K|$

    \medskip
    \item
    $|\dowker(R)/(J,K)|$

    \medskip
    \item
    $|\dowker(R)/(I,K)|$
    
    \medskip
    \item
    $|\dowker(R)/(I,J)|$
    
\end{itemize}
These are distinct in general.

Next we consider six binary relations derived from $R$. There are three projections:
\begin{alignat*}{7}
&
R_{/I}
&&=
R_{JK}
&&=
\{
(j,k)
&&
\mid
\text{there exists\;} i \in I
&&\text{\;such that\;} (i,j,k) \in R
\}
&&
\; \subseteq \;
J \times K
\\
&
R_{/J}
&&=
R_{IK}
&&=
\{
(i,k)
&&\mid
\text{there exists\;} j \in J
&&\text{\;such that\;} (i,j,k) \in R
\}
&&
\; \subseteq \;
J \times K
\\
&
R_{/K}
&&=
R_{IJ}
&&=
\{
(i,j)
&&
\mid
\text{there exists\;} k \in K
&&\text{\;such that\;} (i,j,k) \in R
\}
&&
\; \subseteq \;
I \times J
\end{alignat*}
And there are three `Cartesian rebracketings':
\begin{alignat*}{5}
&
R_{(JK)}
&&=
\{
(i, (j,k))
&&
\mid
(i,j,k) \in R
\}
&&
\; \subseteq \;
&&
I \times (J \times K)
\\
&
R_{(IK)}
&&=
\{
(j, (i,k))
&&
\mid
(i,j,k) \in R
\}
&&
\; \subseteq \;
&&
J \times (I \times K)
\\
&
R_{(IJ)}
&&=
\{
(k, (i,j))
&&
\mid
(i,j,k) \in R
\}
&&
\; \subseteq \;
&&
K \times (I \times J)
\end{alignat*}
Thus we have ostensibly six additional homotopy types:
\begin{itemize}
    \item
    $|\dowker(R_{JK})|
    \simeq |\Dowker_J(R_{JK})|
    \simeq |\Dowker_K(R_{JK})|
    $

    \medskip
    \item
    $|\dowker(R_{IK})|
    \simeq |\Dowker_I(R_{IK})|
    \simeq |\Dowker_K(R_{IK})|
    $

    \medskip
    \item
    $|\dowker(R_{IJ})|
    \simeq |\Dowker_I(R_{IJ})|
    \simeq |\Dowker_J(R_{IJ})|
    $

 \end{itemize}
and:
 \begin{itemize}
    \item
    $|\dowker(R_{(JK)})|
    \simeq |\Dowker_I(R_{(JK)})|
    \simeq |\Dowker_{J \times K}(R_{(JK)})|
    $

    \medskip
    \item
    $|\dowker(R_{(IK)})|
    \simeq |\Dowker_J(R_{(IK)})|
    \simeq |\Dowker_{I \times K}(R_{(IK)})|
    $

    \medskip
    \item
    $|\dowker(R_{(IJ)})|
    \simeq |\Dowker_K(R_{(IJ)})|
    \simeq |\Dowker_{I \times J}(R_{(IJ)})|
    $

\end{itemize}
 See Figure~\ref{fig:rco-projected} for illustrations of the first three of these six.

\begin{figure}
\centering
\includegraphics[scale=1]{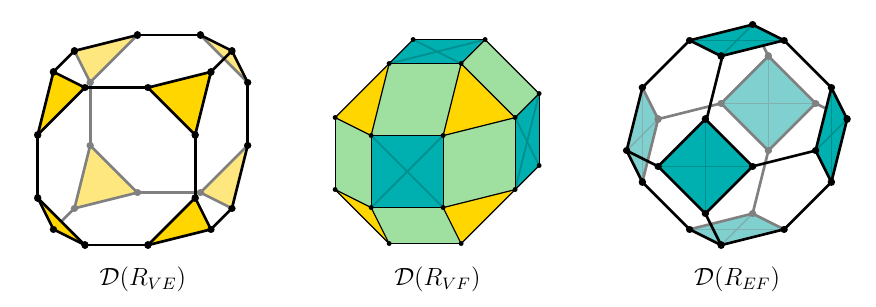}
    \caption{Continuing Figures \ref{fig:rco} and~\ref{fig:rco-iterated}, three additional homotopy types (to complete the set of seven) are realized as the relational products of the three binary relations obtained as projections of the ternary relation~$R$.}
    \label{fig:rco-projected}
\end{figure}

Of the ten homotopy types, only seven are distinct:

\begin{proposition}
    \label{prop:seven-of-ten}
    Let $R \subseteq I \times J \times K$ be a ternary relation. Then:
    \begin{alignat*}{2} 
    &\dowker(R)/(J,K) &&= \Dowker_I(R_{(JK)})
    \\
    &\dowker(R)/(I,K) &&= \Dowker_J(R_{(IK)})
    \\
    &\dowker(R)/(I,J) &&= \Dowker_K(R_{(IJ)})
    \end{alignat*}
\end{proposition}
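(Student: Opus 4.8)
The plan is to unwind both sides of each equation directly from the definitions and check that they describe the same simplicial complex on the same vertex set. By symmetry it suffices to treat the first identity $\dowker(R)/(J,K) = \Dowker_I(R_{(JK)})$; the other two follow by permuting the roles of $I,J,K$.

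First I would expand the left-hand side. By definition $\dowker(R)$ is the prod-complex on $I,J,K$ consisting of triples $(\sigma,\tau,\upsilon)$ with $\sigma\times\tau\times\upsilon\subseteq R$, and the iterated Dowkerian quotient $\dowker(R)/(J,K)$ is obtained by deleting the second and third coordinates (using the formula for iterated quotients from section~\ref{subsec:iterated}). So $\sigma\in\dowker(R)/(J,K)$ iff there exist non-empty finite $\tau\subseteq J$, $\upsilon\subseteq K$ with $\sigma\times\tau\times\upsilon\subseteq R$. Next I would expand the right-hand side. The Cartesian rebracketing $R_{(JK)}\subseteq I\times(J\times K)$ consists of pairs $(i,(j,k))$ with $(i,j,k)\in R$, and $\Dowker_I$ of a binary relation $R'\subseteq I\times L$ is $\{\sigma\subseteq I \mid \exists\, \ell\in L,\ \sigma\times\{\ell\}\subseteq R'\}$. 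Applying this with $L = J\times K$: $\sigma\in\Dowker_I(R_{(JK)})$ iff there is a single pair $(j,k)\in J\times K$ with $\sigma\times\{(j,k)\}\subseteq R_{(JK)}$, i.e. $(i,j,k)\in R$ for all $i\in\sigma$, i.e. $\sigma\times\{j\}\times\{k\}\subseteq R$.

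The equivalence of the two conditions is then an easy ``exists a rectangle'' versus ``exists a point'' argument, entirely parallel to the proof of Proposition~\ref{prop:DI=D/I}. If $\sigma\times\tau\times\upsilon\subseteq R$ with $\tau,\upsilon$ non-empty, pick $j\in\tau$ and $k\in\upsilon$; then $\sigma\times\{j\}\times\{k\}\subseteq R$. Conversely $\sigma\times\{j\}\times\{k\}\subseteq R$ is already the special case $\tau=\{j\}$, $\upsilon=\{k\}$. One should also note both constructions are simplicial complexes on the same vertex set $I$ (the quotient of a prod-complex is a prod-complex of one lower order, and here two coordinates are removed leaving order one), so the equality of simplex sets is the whole statement.

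I do not expect a serious obstacle: the entire content is bookkeeping with the definitions of $\dowker$, Dowkerian quotient, Cartesian rebracketing, and the classical $\Dowker_I$. The one place to be a little careful is the non-emptiness conventions --- simplices in $\Delta I$, $\Delta J$, $\Delta K$ are required to be non-empty, so when passing from ``there exists a product subset of $R$'' to ``there exists a point of $R$'' one genuinely uses that $\tau$ and $\upsilon$ are non-empty --- and, symmetrically, to make sure the rebracketed relation $R_{(JK)}$ is being fed to $\Dowker_I$ as a relation on $I\times(J\times K)$ rather than on $I\times J\times K$, so that the ``single element'' witness is a \emph{pair} $(j,k)$ and not something weaker. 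Once those conventions are pinned down the three identities drop out, and with them the count of seven distinct homotopy types among the ten listed.
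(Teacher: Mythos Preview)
Your proposal is correct and matches the paper's proof essentially line for line: the paper also treats only the first identity by symmetry, unwinds $\dowker(R)/(J,K)$ and $\Dowker_I(R_{(JK)})$ from the definitions, and identifies both with $\{\sigma\in\Delta I \mid \exists\, j,k:\ \sigma\times\{j\}\times\{k\}\subseteq R\}$. Your explicit ``rectangle versus point'' step (choosing $j\in\tau$, $k\in\upsilon$) just makes overt the downward-closure argument that the paper leaves implicit in its first displayed equality.
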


\begin{proof}
    We prove the first of these (the other two being similar):
    \begin{align*}
    \dowker(R)/(J,K)
    &=
    \{
    \sigma \in \Delta I
    \mid
    \text{there exist $j \in J$, $k \in K$}
    \text{\;such that\;}
    (\sigma, \{j\}, \{k\}) \in \dowker(R)
    \}
    \\
    &=
    \{
    \sigma \in \Delta I
    \mid
    \text{there exist $j \in J$, $k \in K$}
    \text{\;such that\;}
    \sigma \times \{j\} \times \{k\} \subseteq R
    \}
    \\
    &=
    \{
    \sigma \in \Delta I
    \mid
    \text{there exists $(j,k) \in J \times K$}
    \text{\;such that\;}
    \sigma \times \{(j,k)\} \subseteq R_{(JK)}
    \}
    \\
    &=
    \Dowker_I(R_{(JK)})
    \qedhere
    \end{align*}
\end{proof}

\begin{figure}
\centering
\includegraphics[scale=1]{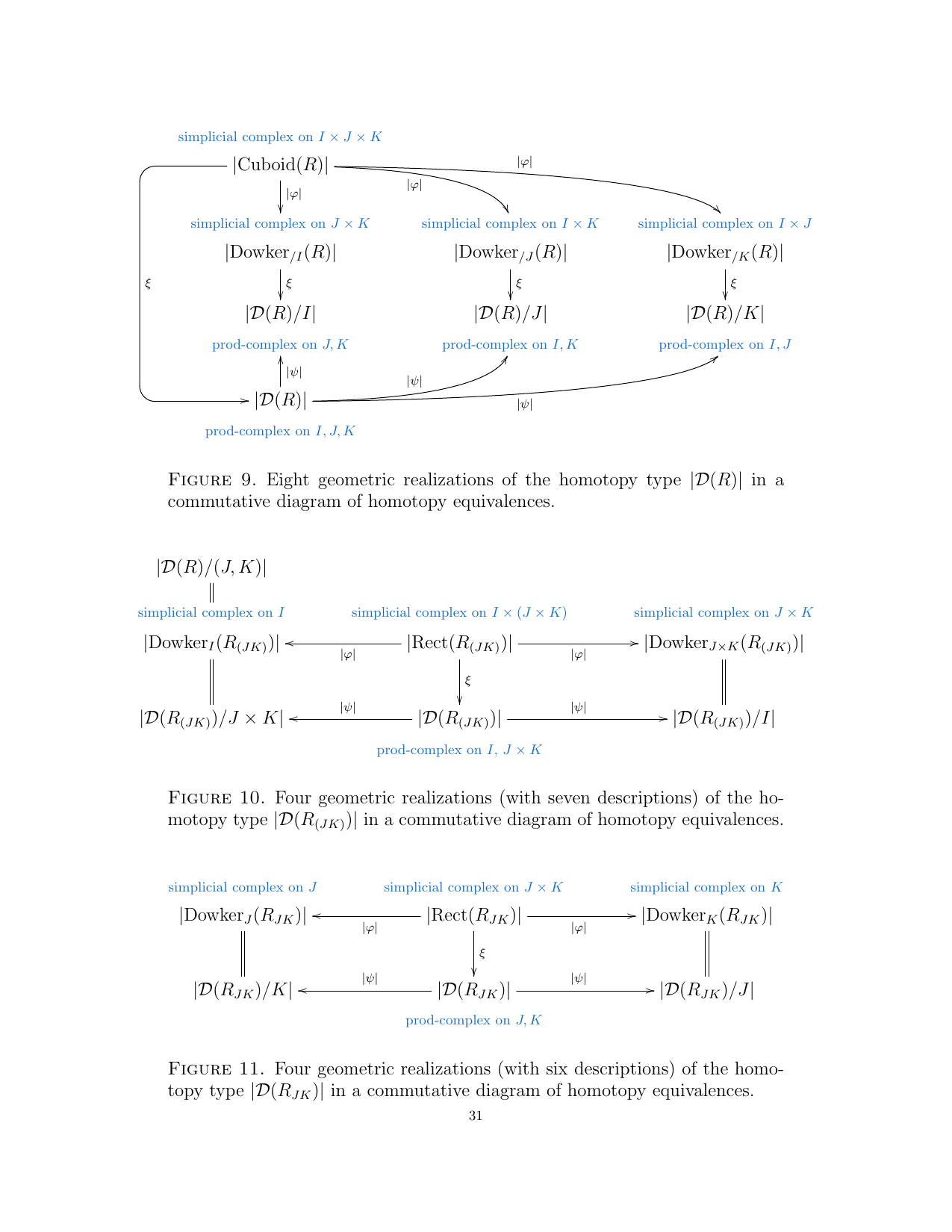}
    \caption{Eight geometric realizations of the homotopy type $|\dowker(R)|$ in a commutative diagram of homotopy equivalences.}
    \label{fig:geometric-representations-1}

\bigskip
\bigskip

\centering
\includegraphics[scale=1]{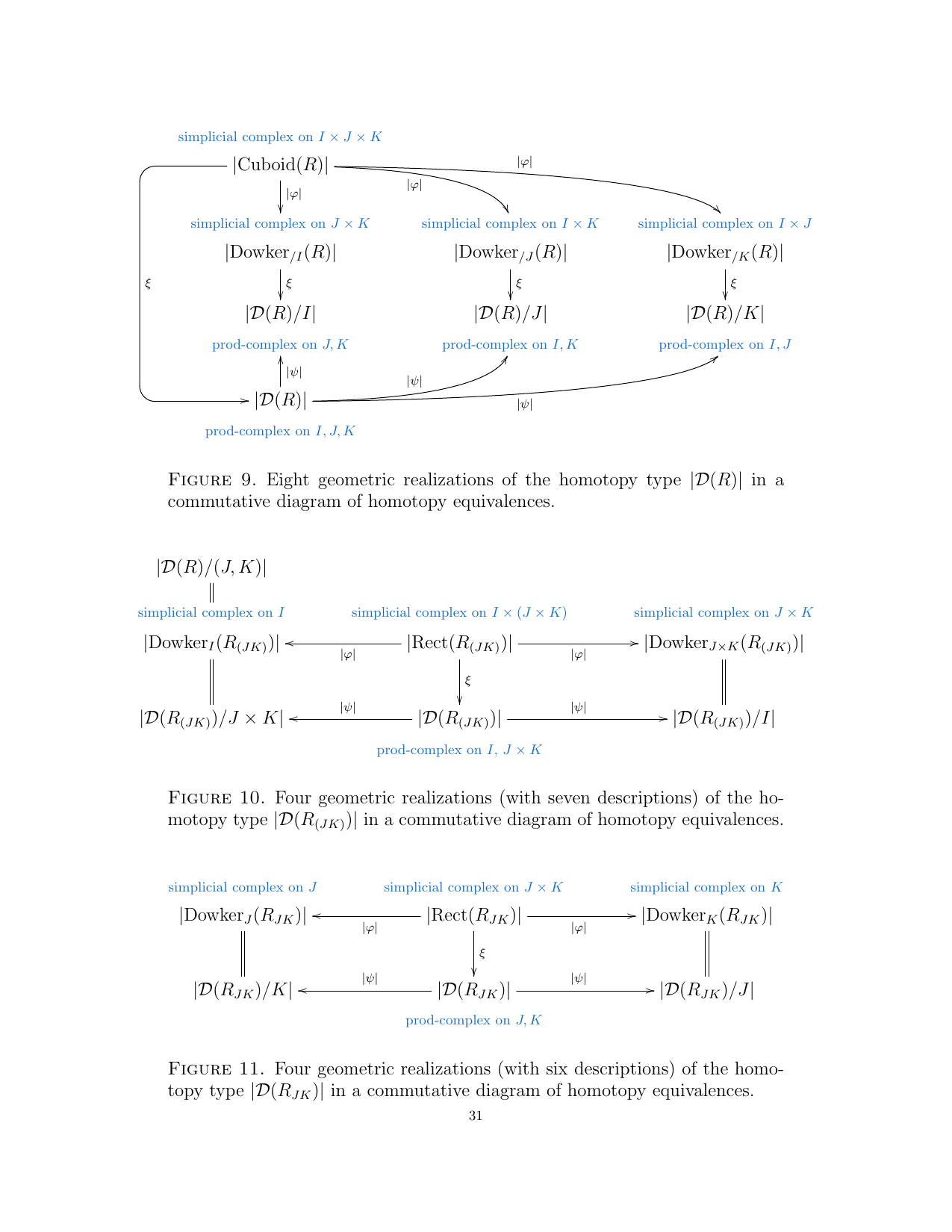}
    \caption{Four geometric realizations (with seven descriptions) of the homotopy type $|\dowker(R_{(JK)})|$ in a commutative diagram of homotopy equivalences.
    }
    \label{fig:geometric-representations-2}

\bigskip
\bigskip

\centering
\includegraphics[scale=1]{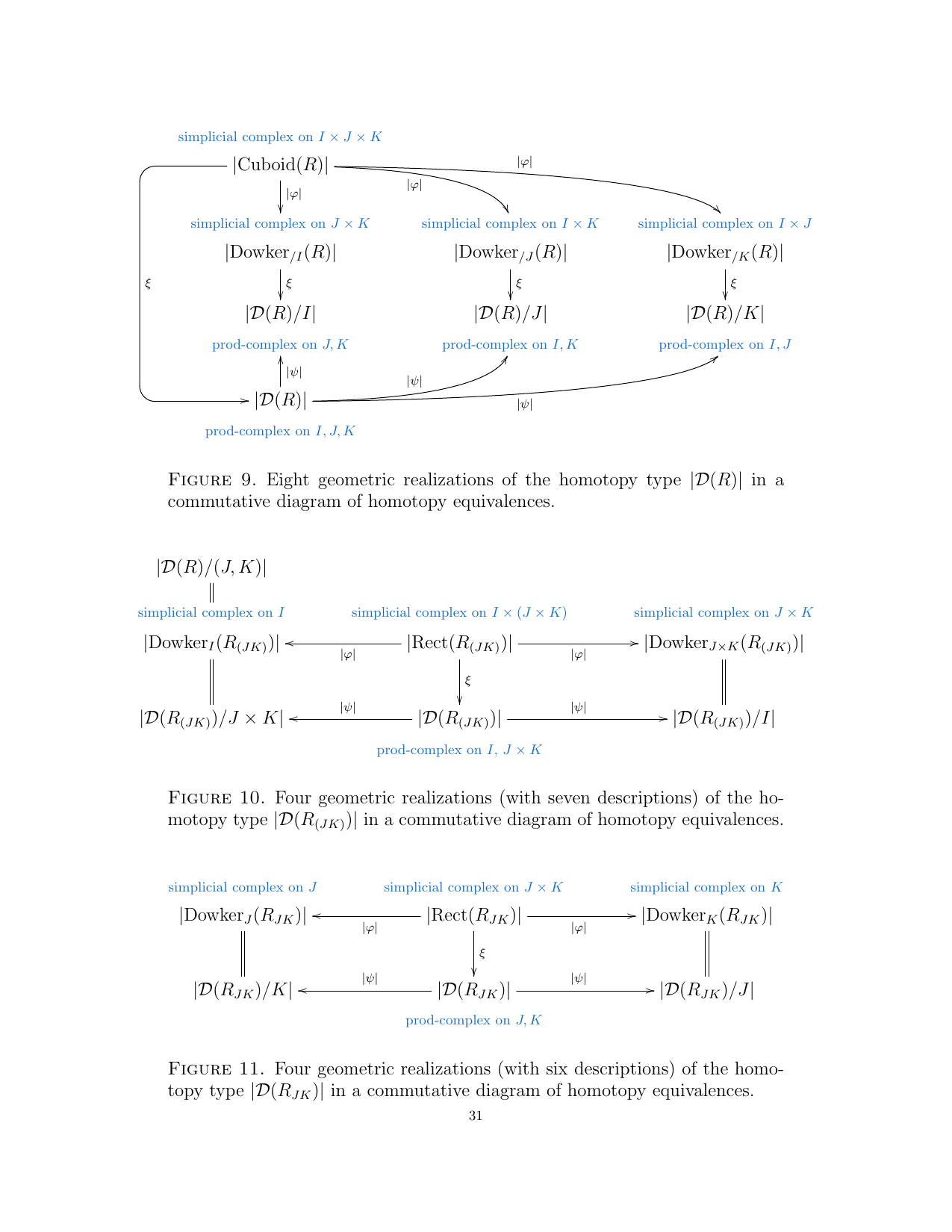}
    \caption{Four geometric realizations (with six descriptions) of the homotopy type $|\dowker(R_{JK})|$ in a commutative diagram of homotopy equivalences.
    }
    \label{fig:geometric-representations-3}
\end{figure}

Each of the seven homotopy types may be geometrically realized in several different ways:
see Figures \ref{fig:geometric-representations-1}, \ref{fig:geometric-representations-2}, \ref{fig:geometric-representations-3}.
These diagrams commute (Corollary~\ref{cor:multiway-square}) so the different realizations are {canonically} homotopy equivalent.
The maps defining the homotopy equivalences are natural transformations 
(Remark~\ref{rem:multiway-naturality}, Proposition~\ref{prop:squash-natural}, Proposition~\ref{prop:multiway-simplicial-functoriality})
so each commutative diagram is functorial in~$R$.\footnote{%
The arrows in Figures \ref{fig:geometric-representations-2} and~\ref{fig:geometric-representations-3} are functorial in~$R$ because the binary relations $R_{(JK)}$ and $R_{JK}$ depend functorially on~$R$: 
given a product map $f \times g \times h : I \times J \times K \to L \times M \times N$ such that $(f \times g \times h)(R) \subseteq S$ it follows that $(f \times (g \times h))(R_{(JK)}) \subseteq S_{(MN)}$ and $(g \times h)(R_{JK}) \subseteq S_{MN}$.}

It follows that persistence modules defined using different realizations of the same homotopy type will be canonically isomorphic. Suppose $A : I \times J \times K \to \R$ is a three-dimensional array, and let $R^t = A^{-1}(-\infty,t]$. Then, for instance, the persistence modules $\mathrm{H}_*(|\dowker(R^t)/(J,K)|)$ and $\mathrm{H}_*(|\dowker(R^t_{(JK)})/I|)$ are isomorphic through the diagram in Figure~\ref{fig:geometric-representations-2}.

If we include the similar forms obtained by reordering $I,J,K$ then Figures \ref{fig:geometric-representations-1}, \ref{fig:geometric-representations-2}, \ref{fig:geometric-representations-3} identify twenty-two different simplicial complexes associated to a ternary relation, organized by homotopy type. For a complementary presentation more suited to finding relationships between these constructions, Figure~\ref{table:constructions-by-vertex-set} catalogues the complexes by their vertex sets.

\begin{remark}
\label{rem:table-tops}
By examining the definitions in the right-hand column of Figure~\ref{table:constructions-by-vertex-set}, the reader may confirm that the first listed entry for each vertex set---$\cuboid(R)$, $\Dowker_{/K}(R)$, $\Dowker_I(R_{JK})$---is a subcomplex of the other entries with the same vertex set. These are the only subcomplex inclusions between simplicial complexes of Figure~\ref{table:constructions-by-vertex-set} that hold generally.
\end{remark}

\begin{figure}
    \bigskip
    \centering
    \begin{tabular}{|l|l|l||l|lll|}
    \hline
    \text{\footnotesize vertex set}
    &
    \text{\footnotesize simplicial complex}
    &
    \text{\footnotesize homotopy type}
    &
    \multicolumn{4}{|l|}{
    \text{\footnotesize simplex membership criterion}}
    \\\hline\hline
    &&&&&&\\[-10pt]
    &
    $\cuboid(R)$
    &
    $\dowker(R)$
    &
    &
    &
    $\rho_I \times \rho_J \times \rho_K$
    &$\subseteq R$
    \\[4pt]
    \multirow{2}[2]{*}{$I \times J \times K$}
    &
    $\rect(R_{(JK)})$
    &
    $\dowker(R)/(J,K)$
    &
    \multirow{2}[2]{*}{$\rho$}
    &
    &
    $\rho_I \times \rho_{JK}$
    &$\subseteq R_{(JK)}$
    \\[4pt]
    &
    $\rect(R_{(IK)})$
    &
    $\dowker(R)/(I,K)$
    &
    &
    &
    $\rho_J \times \rho_{IK}$
    &$\subseteq R_{(IK)}$
    \\[4pt]
    &
    $\rect(R_{(IJ)})$
    &
    $\dowker(R)/(I,J)$
    &
    &
    &
    $\rho_K \times \rho_{IJ}$
    &$\subseteq R_{(IJ)}$
    \\[4pt]
    \hline
    &&&&&&\\[-10pt]
    &
    $\Dowker_{/K}(R)$
    &
    $\dowker(R)$
    &
    &
    $(\exists k)$
    &
    $\zeta_I \times \zeta_J \times \{k\}$
    &
    $\subseteq R$
    \\[4pt]
    $I \times J$
    &
    $\Dowker_{I \times J}(R_{(IJ)})$
    &
    $\dowker(R)/(I,J)$
    &
    $\zeta$
    &
    $(\exists k)$
    &
    $\{k\} \times \zeta$
    &
    $\subseteq R_{(IJ)}$
    \\[4pt]
    &
    $\rect(R_{IJ})$
    &
    $\dowker(R_{IJ})$
    &
    &
    &
    $\zeta_I \times \zeta_J$
    &
    $\subseteq R_{IJ}$
    \\[4pt]
    \hline
    &&&&&&\\[-10pt]
    &
    $\Dowker_I(R_{(JK)})$
    &
    $\dowker(R)/(J,K)$
    &&
    $(\exists j)(\exists k)$
    &
    $\sigma \times \{j\} \times \{k\}$
    &
    $\subseteq R$
    \\[4pt]
    $I$
    &
    $\Dowker_I(R_{IJ})$
    &
    $\dowker(R_{IJ})$
    &
    $\sigma$
    &
    $(\exists j)$
    &
    $\sigma \times \{j\}$
    &
    $\subseteq R_{IJ}$
    \\[4pt]
    &
    $\Dowker_I(R_{IK})$
    &
    $\dowker(R_{IK})$
    &
    &
    $(\exists k)$
    &
    $\sigma \times \{k\}$
    &
    $\subseteq R_{IK}$
    \\[4pt]
    \hline
    \end{tabular}
    \caption{Simplicial complexes associated to a ternary relation.
    Subscripts on a simplex denote projection: $\rho_I = \proj_I(\rho)$, $\rho_{IJ} = \proj_{I \times J}(\rho)$ and so on.
    The first entry in each block is a subcomplex of the entries below it.
    }
    \label{table:constructions-by-vertex-set}
\end{figure}

\subsection{Ternary Dowker pairs}
\label{subsec:ternary-pairs}

In this section we identify twelve natural transformations between the seven homotopy types constructed in section~\ref{subsec:ternary-complexes}: a set of three, a set of six, and three more by composition. See Figures \ref{fig:ternary-natural},~\ref{fig:ternary-natural-rco}.

\begin{figure}
\[
\xymatrix@R=4pc@C=4pc{
&
|\dowker(R)|
    \ar[dl]
    \ar[d]
    \ar[dr]
\\
|\dowker(R)/(J,K)|
    \ar[dr]
    \ar[d]
&
|\dowker(R)/(I,K)|
    \ar[dl]
    \ar[dr]
&
|\dowker(R)/(I,J)|
    \ar[d]
    \ar[dl]
\\
|\dowker(R_{IJ})|
&
|\dowker(R_{IK})|
&
|\dowker(R_{JK})|
}
\]
    \caption{Commutative diagram of natural transformations between the seven homotopy types associated to a ternary relation~$R$.
    }
    \label{fig:ternary-natural}

\bigskip
\bigskip

\centering
\includegraphics[scale=1]{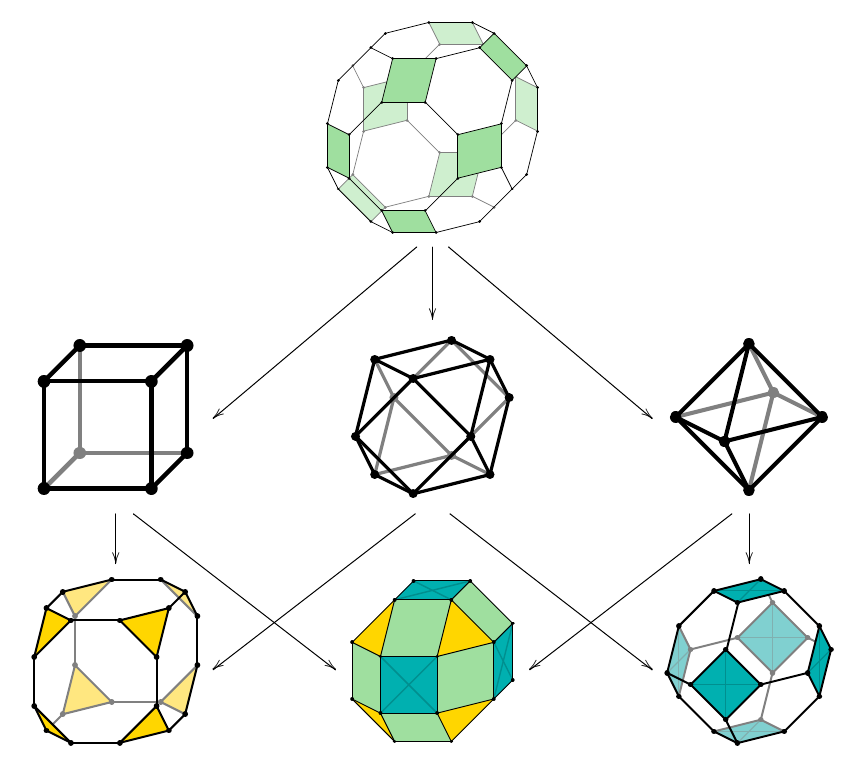}

    \caption{The diagram of natural transformations in Figure~\ref{fig:ternary-natural} applied to the running example from Figures \ref{fig:rco-iterated-all}, \ref{fig:rco}, \ref{fig:rco-iterated}, \ref{fig:rco-projected}.}
    \label{fig:ternary-natural-rco}
\end{figure}

Each natural transformation is realized as a map between complexes representing the two homotopy types involved. The homotopy type of the mapping cone---the `homotopy cofiber'---depends only on the homotopy type of the map, which we are free to choose wisely.
In each case there is a \emph{subcomplex realization}: an inclusion map of a subcomplex in a larger simplicial complex on the same vertex set.
Thus we can compute the reduced homology of the mapping cone as the relative homology of the pair of complexes.

We now construct Figure~\ref{fig:ternary-natural}. The top three maps are the iterated Dowkerian quotients. Each pair of these maps form two sides of a parallelogram that commutes up to homotopy. 
The following diagram
constructs the left parallelogram, the other two being similar.
\begin{equation}
\label{eqn:construct-ternary-nt}
\raisetag{-5ex}
\xymatrix@R=2.5pc@C=3pc{
&
|\dowker(R)|
    \ar[dl]
    \ar[dr]
    \ar[d]_{|\psi_K|}^{\simeq}
\\
|\dowker(R)/(J,K)|
    \ar[d]_{A}
    \ar@{.>}[dr]^{A'}
&
|\dowker(R)/K|
    \ar[d]_{P}
    \ar[l]_-{|\psi_J|}
    \ar[r]^-{|\psi_I|}
&
|\dowker(R)/(I,K)|
    \ar[d]^{B}
    \ar@{.>}[dl]_{B'}
\\
|\dowker(R_{IJ})/J|
&
|\dowker(R_{IJ})|
    \ar[l]^-{|\psi_J|}_-{\simeq}
    \ar[r]_-{|\psi_I|}^-{\simeq}
&
|\dowker(R_{IJ})/I|
}
\end{equation}
The $\psi$ maps are Dowkerian quotients, three of which are homotopy equivalences.
The map~$P$ is a geometric prod-complex inclusion.
This exists because
$\sigma \times \tau \times \{k\} \subseteq R$ implies $\sigma \times \tau \subseteq R_{IJ}$.
The maps $A,B$ are inclusions that follow from Remark~\ref{rem:table-tops}:
\begin{alignat*}{7}
&\dowker(R)/(J,K)
&& \xymatrix{\ar@{=}[r] &}
&& \Dowker_I(R_{(JK)})
&& \xymatrix{\ar[r] &}
&& \Dowker_I(R_{IJ})
&& \xymatrix{\ar@{=}[r] &}
&&\dowker(R_{IJ})/J
\\
&\dowker(R)/(I,K)
&& \xymatrix{\ar@{=}[r] &}
&& \Dowker_J(R_{(IK)})
&& \xymatrix{\ar[r] &}
&& \Dowker_J(R_{IJ})
&& \xymatrix{\ar@{=}[r] &}
&&\dowker(R_{IJ})/I
\end{alignat*}
Their existence, easily verified directly, stems from the status of $\psi_{J}$, $\psi_{I}$ as quotient maps. Finally, in the homotopy category, we can and must take $A' \simeq \alpha A$ and $B' \simeq \beta B$, where $\alpha, \beta$ are homotopy inverses to $|\psi_J|, |\psi_I|$ in the bottom row.

\begin{remark}
    The diagram {without} $A',B'$ is functorial within the category of spaces. The diagram including $A',B'$ is functorial only within the homotopy category, because the homotopy inverses $\alpha, \beta$ are specified only up to homotopy.  
\end{remark}

Next we realize the natural transformations in Figure~\ref{fig:ternary-natural} as simplicial maps, preferably as subcomplex inclusions. Up to permutations of $I,J,K$ there are three types to consider.

\begin{proposition}
    \label{prop:nt-realizations}
    {\bf (i)}
    The map
    $|\dowker(R)| \longrightarrow |\dowker(R_{IJ})|$
    is realized by these simplicial maps:
    \begin{alignat*}{4}
        &
        \cuboid(R)
        &&
        \xymatrix{\ar[r]&}
        \rect(R_{IJ})
        &&
        \text{\small\rm (induced by $I \times J \times K 
        \longrightarrow
        I \times J$)}
        \\
        &
        \Dowker_{/K}(R)
        &&
        \xymatrix{\ar[r]&}
        \rect(R_{IJ})
        &\qquad&
        \text{\small\rm (subcomplex inclusion, Remark~\ref{rem:table-tops})}
    \end{alignat*}

    \noindent
    {\bf (ii)}
    The map $|\dowker(R)| \longrightarrow |\dowker(R)/(J,K)|$ is realized by these simplicial maps:
    \begin{alignat*}{4}
        &
        \cuboid(R)
        &&
        \xymatrix{\ar[r]&}
        \rect(R_{(JK)})
        &&
        \text{\small\rm (subcomplex inclusion, Remark~\ref{rem:table-tops})}
        \\
        &
        \cuboid(R)
        &&
        \xymatrix{\ar[r]&}
        \Dowker_I(R_{(JK)})
        &&
        \text{\small\rm (induced by $I \times J \times K 
        \longrightarrow
        I$)}
        \\
        &
        \cuboid(R)
        &&
        \xymatrix{\ar[r]&}
        \Dowker_{J \times K}(R_{(JK)})
        &&
        \text{\small\rm (induced by $I \times J \times K 
        \longrightarrow
        J \times K$)}
        \\
        &
        \Dowker_{/I}(R)
        &&
        \xymatrix{\ar[r]&}
        \Dowker_{J \times K}(R_{(JK)})
        &\qquad&
        \text{\small\rm (subcomplex inclusion, Remark~\ref{rem:table-tops})}
    \end{alignat*}

    \noindent
    {\bf (iii)}
    The map $|\dowker(R)/(J,K)| \longrightarrow |\dowker(R_{IJ})|$ is realized by this simplicial map:
    \begin{alignat*}{4}
        &
        \Dowker_I(R_{(JK)})
        &&
        \xymatrix{\ar[r]&}
        & 
        \Dowker_I(R_{IJ})
        &\qquad&
        \text{\small\rm (subcomplex inclusion, Remark~\ref{rem:table-tops})}
    \end{alignat*}
\end{proposition}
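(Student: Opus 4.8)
The strategy is to build every one of the listed simplicial maps from a single ``core'' map together with the canonical comparison maps already in hand --- the squashing maps $\xi$ (Theorem~\ref{thm:squashing-HE}), the simplicial Dowkerian quotients $\varphi_k$, and the equalities of Proposition~\ref{prop:seven-of-ten} --- and then to certify that each realizes the intended natural transformation by a diagram chase. Recall that every arrow of Figure~\ref{fig:ternary-natural} was assembled in and around diagram~\eqref{eqn:construct-ternary-nt} from Dowkerian quotient maps, a prod-complex inclusion $P$, and (in the homotopy category) homotopy inverses of those quotient maps that Theorem~\ref{thm:multiway-main-theorem} certifies as equivalences. A simplicial map $f\colon S\to S'$ \emph{realizes} such an arrow when there is a square, commuting up to homotopy, with $|f|$ on top, the natural transformation on the bottom, and with vertical sides the canonical homotopy equivalences identifying $|S|,|S'|$ with the two homotopy types involved; these verticals are always composites of squashing maps, of the $|\varphi_k|$ and $|\psi_k|$ that are equivalences, and of the identifications of Proposition~\ref{prop:seven-of-ten}.

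First I would dispatch well-definedness. A map ``induced by a projection $I\times J\times K\to\cdots$'' is simplicial because a simplex $\rho$ with $\rho_I\times\rho_J\times\rho_K\subseteq R$ has every relevant sub-product of its projections inside the correspondingly projected relation --- for instance $\proj_{J\times K}(\rho)\subseteq\rho_J\times\rho_K$ forces $\{i\}\times\proj_{J\times K}(\rho)\subseteq R_{(JK)}$ for each $i\in\rho_I$. Each map billed as a ``subcomplex inclusion'' is exactly one of the inclusions of Remark~\ref{rem:table-tops}, read off Figure~\ref{table:constructions-by-vertex-set} (for the ambient vertex set $J\times K$ one uses the $I,J,K$-permuted version of that figure). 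So part~(iii), the last map of part~(i), and the first and last maps of part~(ii) are immediate from Remark~\ref{rem:table-tops}.

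Next I would assemble the homotopy-commuting ladders. The workhorse is Corollary~\ref{cor:squash-dowkerian}, which for a prod-complex $P$ relates $|\simp(\psi_k)|$ to $|\psi_k|$ through the squashing maps; iterating it with $P=\dowker(R)$ and invoking Proposition~\ref{prop:seven-of-ten} realizes the three top arrows of Figure~\ref{fig:ternary-natural} by the ``core'' maps $\cuboid(R)\to\Dowker_I(R_{(JK)})$ of part~(ii), while applying $\simp$ --- in the extended sense of Remark~\ref{rem:dowkerian-*} --- to the composites $\dowker(R)\xrightarrow{\psi_K}\dowker(R)/K\xrightarrow{P}\dowker(R_{IJ})$ of diagram~\eqref{eqn:construct-ternary-nt} produces the maps of part~(i). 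The rest of the maps display the \emph{same} homotopy type on a different vertex set: since $|\dowker(R)/(J,K)|=|\dowker(R_{(JK)})|$, Dowker's theorem applied to the binary relation $R_{(JK)}$ yields the realizations $\rect(R_{(JK)})$, $\Dowker_I(R_{(JK)})$, $\Dowker_{J\times K}(R_{(JK)})$ of Figure~\ref{fig:geometric-representations-2}, canonically equivalent via the commuting squares of Corollary~\ref{cor:multiway-square}; post-composing the core map (or its subcomplex avatar) with these equivalences gives the remaining maps of parts~(ii) and~(iii), and naturality of $\xi$ (Proposition~\ref{prop:squash-natural}) together with functoriality of $\simp$ (Propositions~\ref{prop:simp-functorial},~\ref{prop:epi}) keeps every such composite commuting, up to homotopy, with the prod-complex-level transformation.

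The main obstacle is bookkeeping, not conceptual depth: one must track up to four simplicial realizations of each of the seven homotopy types, check that the many auxiliary squares --- from Corollaries~\ref{cor:squash-dowkerian} and~\ref{cor:multiway-square}, Proposition~\ref{prop:squash-natural}, and the naturality~\eqref{eq:dowkerian-naturality} of Dowkerian quotients --- commute strictly before passing to the homotopy category, and remember that the transformations of Figure~\ref{fig:ternary-natural} are themselves defined only up to homotopy (the homotopy inverses $\alpha,\beta$ of diagram~\eqref{eqn:construct-ternary-nt} are not canonical), so ``realizes'' must be interpreted throughout in the homotopy category. Once those squares are in place, each of the listed verifications is a routine diagram chase.
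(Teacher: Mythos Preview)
Your proposal is correct and follows essentially the same route as the paper: establish well-definedness of the listed maps via Remark~\ref{rem:table-tops} and the projection criteria, then verify each realizes the intended arrow of Figure~\ref{fig:ternary-natural} by building commuting ladders out of the squashing maps~$\xi$ (Corollary~\ref{cor:squash-dowkerian}, Corollary~\ref{cor:multiway-square}) over the prod-complex maps of diagram~\eqref{eqn:construct-ternary-nt}, bootstrapping the alternative realizations from a ``core'' one via the Dowker equivalences of Figures~\ref{fig:geometric-representations-1}--\ref{fig:geometric-representations-3}. One small slip: you write $|\dowker(R)/(J,K)|=|\dowker(R_{(JK)})|$, but Proposition~\ref{prop:seven-of-ten} gives $\dowker(R)/(J,K)=\Dowker_I(R_{(JK)})$, which is only homotopy equivalent (not equal) to $|\dowker(R_{(JK)})|$; this does not affect the argument.
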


\begin{proof}[Proof of Proposition~\ref{prop:nt-realizations}]
    Part {\bf (i)} follows from the simplexification diagram
    \[
    \xymatrix{
    |\dowker(R)|
        \ar[r]^-{\simeq}
    &
    |\dowker(R)/K|
        \ar[r]^{}
    &
    |\dowker(R_{IJ})|
    \\
    |\cuboid(R)|
        \ar[r]_-{\simeq}
        \ar[u]^{\simeq}_{\xi}
    &
    |\Dowker_{/K}(R)|
        \ar[r]
        \ar[u]^{\simeq}_{\xi}
    &
    |\rect(R_{IJ})|
        \ar[u]^{\simeq}_{\xi}
    }
    \]
    whose upper row is the central column of
    diagram~\eqref{eqn:construct-ternary-nt}

    Part {\bf (ii)} follows from the simplexification diagram
    \[
    \xymatrix{
    |\dowker(R)|
        \ar[rrrr]^(0.45){|\psi_J||\psi_K| \,=\, |\psi_K||\psi_J|}
    &&&&
    |\dowker(R)/(J,K)|
    \\
    |\cuboid(R)|
        \ar[u]^{\simeq}_{\xi}
        \ar[rrrr]
            ^(0.45){|\varphi_J||\varphi_K| \,=\, |\varphi_K||\varphi_J|}
    &&&&
    |\Dowker_I(R_{(JK)})|
        \ar[u]^{\simeq}_{\xi}
    }
    \]
    of the iterated Dowkerian quotient map, along with the diagram of simplicial maps
    \[
    \xymatrix{
    \cuboid(R)
        \ar[r]
        \ar[d]_{\simeq}
    &
    \rect(R_{(JK)})
        \ar[r]^-{\simeq}
        \ar[d]_{\simeq}
    &
    \Dowker_I(R_{(JK)})
    \\
    \Dowker_{/I}(R)
        \ar[r]
    &
    \Dowker_{J \times K}(R_{JK})
    }
    \]
    three of whose maps are Dowker homotopy equivalences.

    Part {\bf (iii)} follows from the simplexification diagram
    \[
    \xymatrix{
    |\dowker(R)/(J,K)|
        \ar[r]
    &
    |\dowker(R_{IJ})/J|
    &
    |\dowker(R_{IJ})|
        \ar[l]_-{\simeq}
    \\
    |\Dowker_I(R_{(JK)})|
        \ar[u]^{\simeq}_{\xi}
        \ar[r]
    &
    |\Dowker_I(R_{IJ})|
        \ar[u]^{\simeq}_{\xi}
    &
    |\rect(R_{IJ})|
        \ar[u]^{\simeq}_{\xi}
        \ar[l]^-{\simeq}    
    }
    \]
    whose upper row is the lower left corner of
    diagram~\eqref{eqn:construct-ternary-nt}
\end{proof}

\begin{remark}
    \label{rem:no-triple}
    Whereas each individual natural transformation may be realized by a subcomplex inclusion, the factorization
    \[
    |\dowker(R)| \longrightarrow |\dowker(R)/(J,K)| \longrightarrow |\dowker(R_{IJ})|
    \]
    is not realized as a composite of two of these inclusions. The following diagram perhaps comes closest to such a realization:
    \[
    \xymatrix{
    \cuboid(R)
        \ar[rr]^-{\subseteq}
        \ar[d]^{\simeq}
    &&
    \rect(R_{(JK)})
        \ar[r]_-{\simeq}
    &
    \Dowker_I(R_{(JK)})
        \ar[rr]^-{\subseteq}
    &&
    \Dowker_I(R_{IJ})
    \\
    \Dowker_{/K}(R)
        \ar[rrrrr]_-{\subseteq}
    &&&&&
    \rect(R_{IJ})
        \ar[u]^{\simeq}
    }
    \]
    The three connecting maps are Dowker homotopy equivalences.
\end{remark}

This completes our construction of the diagram of natural transformations in Figure~\ref{fig:ternary-natural}. Each of the twelve maps yields, as an invariant, the homotopy type of its mapping cone (its homotopy cofiber). In total, we have nineteen homotopy types associated functorially to a ternary relation.
For the twelve natural transformations, we may calculate the relative homology of their subcomplex realizations:
\begin{alignat*}{4}
&
\Hgr_* \big(|\dowker(R)| \longrightarrow |\dowker(R_{IJ})| \big)
&\quad&
=
&\quad&
\Hgr_*\big(
    \rect(R_{IJ}), \Dowker_{/K}(R)
\big)
\\
&
\Hgr_*\big(
|\dowker(R)| \to |\dowker(R)/(J,K)|
\big)
&\quad&
=
&\quad&
\begin{cases}
    \Hgr_*\big( \rect(R_{(JK)}),\cuboid(R) \big)
    \\
    \Hgr_*\big(
        \Dowker_{J \times K}(R_{(JK)}), \Dowker_{/I}(R)
    \big)
\end{cases}
\\
&
\Hgr_*\big(
|\dowker(R)/(J,K)| \to |\dowker(R_{IJ})|
\big)
&\quad&
=
&\quad&
\Hgr_*\big(
    \Dowker_I(R_{IJ}), \Dowker_I(R_{(JK)})
\big)
\end{alignat*}
Each pair yields a long exact sequence;  likewise each of the six composites of the form
\[
|\dowker(R)|
\longrightarrow |\dowker(R)/(I,J)|
\longrightarrow |\dowker(R_{IK})|
\]
yields the long exact sequence of a triple.
Finally, all of this can be done in parametrized settings. In particular, we get persistence diagrams in relative homology for each of the twelve natural transformations, alongside persistence diagrams in absolute homology for the seven homotopy types. The interpretation of these diagrams we leave as an open question.

As a closing exercise, we invite the reader to construct the different geometric realizations of the seven homotopy types of the running example
(Figures \ref{fig:rco-iterated-all}, \ref{fig:rco}, \ref{fig:rco-iterated}, \ref{fig:rco-projected}, \ref{fig:ternary-natural-rco})
and determine the homotopy cofibers of the twelve natural transformations. For example, the homotopy cofiber of $|\dowker(R)| \to |\dowker(R_{VF})|$ is a bouquet of fourteen 2-spheres.

\section*{Acknowledgements}

VdS thanks Robert Ghrist for introducing him to Dowker's theorem many years ago, and dedicates section~\ref{subsec:dowkerian-quotients} to his first undergraduate supervision partner. 
CG was partially supported by the Air Force Office of Scientific Research under award number FA9550-21-1-0266.
VI was partially supported by the NSF Next Generation Networks for Neuroscience Program (award 2014217).
MR was partially supported by the Office of Naval Research (ONR) under contract number N00014-22-1-2659.
RS was partially supported by the NSF grant DMS-185470.
NS completed part of this work while appointed at the University of Delaware and supported by the Air Force Office of Scientific Research under
award number FA9550-21-1-0266.
All of the authors express their gratitude to Chad Giusti, Gregory Henselman-Petrusek, and Lori Ziegelmeier, for organizing the workshop at which this project began; and to the American Institute of Mathematics for hosting the workshop.

\bibliographystyle{plainurl}
\bibliography{references}

\begin{thebibliography}{10}

\bibitem{Begle_1950}
Edward~G. Begle.
\newblock The {V}ietoris mapping theorem for bicompact spaces.
\newblock {\em Annals of Mathematics}, 51(3):534--543, 1950.

\bibitem{Bjorner:1995}
Anders Bj{\"o}rner.
\newblock Topological methods.
\newblock In R.~L. Graham, M.~Grotschel, and L.~Lovasz, editors, {\em Handbook
  of Combinatorics}, pages 1819--1872. MIT Press, Cambridge, MA, USA, 1995.

\bibitem{Brown_2006}
Ronald Brown.
\newblock {\em {Topology and Groupoids}}.
\newblock BookSurge Publishing, 2006.

\bibitem{Brun_Grinberg_2024}
Morten Brun and Darij Grinberg.
\newblock The {D}owker theorem via discrete {M}orse theory, 2024.
\newblock \href {http://arxiv.org/abs/2407.15454} {\path{arXiv:2407.15454}}.

\bibitem{Brun_Salbu_2022}
Morten Brun and Lars~M. Salbu.
\newblock The rectangle complex of a relation.
\newblock {\em Mediterranean Journal of Mathematics}, 20(1), 2022.

\bibitem{Chowdhury_Memoli}
Samir Chowdhury and Facundo M\'{e}moli.
\newblock A functorial {D}owker theorem and persistent homology of asymmetric
  networks.
\newblock {\em Journal of Applied and Computational Topology}, 2:115--175, 10
  2018.

\bibitem{Dowker}
C.~H. Dowker.
\newblock Homology groups of relations.
\newblock {\em Annals of Mathematics}, 56(1):84--95, 1952.

\bibitem{Dowker_1952b}
C.~H. Dowker.
\newblock Topology of metric complexes.
\newblock {\em American Journal of Mathematics}, 74(3):555--577, 1952.

\bibitem{Hatcher_2002}
Allen Hatcher.
\newblock {\em Algebraic Topology}.
\newblock Cambridge University Press, Cambridge, 2002.

\bibitem{Kozlov_2008}
Dmitry Kozlov.
\newblock {\em Combinatorial Algebraic Topology}.
\newblock Springer Berlin Heidelberg, 2008.

\bibitem{Smale_1957}
Stephen Smale.
\newblock A {V}ietoris mapping theorem for homotopy.
\newblock {\em Proceedings of the American Mathematical Society},
  8(3):604--610, 1957.

\bibitem{Vietoris_1927}
L.~Vietoris.
\newblock {\"U}ber den h{\"o}heren {Z}usammenhang kompakter {R}{\"a}ume und
  eine {K}lasse von zusammenhangstreuen {A}bbildungen.
\newblock {\em Mathematische Annalen}, 97:454--472, 1927.

\bibitem{Virk_2021}
\v{Z}iga Virk.
\newblock {R}ips complexes as nerves and a functorial {D}owker-nerve diagram.
\newblock {\em Mediterranean Journal of Mathematics}, 18(2):58, 2021.

\bibitem{Whitehead_1949}
J.~H.~C. Whitehead.
\newblock Combinatorial homotopy {I}.
\newblock {\em Bulletin of the American Mathematical Society}, 55(5):213--245,
  1949.

\bibitem{Yoon_2024}
Iris H.~R. Yoon.
\newblock Dowker duality, profunctors, and spectral sequences, 2024.
\newblock \href {http://arxiv.org/abs/2408.13136} {\path{arXiv:2408.13136}}.

\end{thebibliography}

\end{document}